\tikzset{
	dynkinnode/.style={draw, fill, circle, inner sep=0pt, minimum size=2.5mm}, 
	every label/.style={font=\scriptsize}, 
	double line/.style={ 
		decoration={ 
			markings,
			mark=at position 0.6 with {\arrow[scale=0.65]{Straight Barb}} 
		},
		postaction={decorate},
		double, 
		double distance = 1mm
	},
	arrow sign/.style={ 
		decoration={
			markings,
			mark=at position 0.6 with {\arrow[scale=2]{Straight Barb}}
		},
		postaction={decorate},
	},
	x = 1.25cm  
}
\newcommand{\doubleline}[2]{
	\draw ($ (#1) + (0, 0.5mm) $) -- ($ (#2) + (0, 0.5mm) $);
	\draw ($ (#1) + (0, -0.5mm) $) -- ($ (#2) + (0, -0.5mm) $);
	\path[arrow sign] (#1) -- (#2);
}
\newcounter{countercheck}[subsection]
\renewcommand\thecountercheck{(\arabic{section}.\arabic{countercheck})}
\theoremstyle{plain}
\newtheorem{theorem}[countercheck]{Theorem}
\newtheorem{proposition}[countercheck]{Proposition}
\newtheorem{lemma}[countercheck]{Lemma}
\newtheorem{corollary}[countercheck]{Corollary}
\theoremstyle{definition}
\newtheorem{convention}[countercheck]{Convention}
\theoremstyle{remark}
\newtheorem{remark}[countercheck]{Remark}
\renewcommand{\phi}{\varphi}
\renewcommand{\epsilon}{\varepsilon}
\newcommand{\NN}{\mathbb{N}}
\newcommand{\FF}{\mathbb{F}}
\newcommand{\tcol}[1]{\textcolor{red}{#1}}
\newcommand{\KK}{\mathbb{K}}
\newcommand{\C}{\mathcal{C}}
\DeclareMathOperator{\id}{id}
\DeclareMathOperator{\proj}{proj}
\DeclareMathOperator{\Stab}{Stab}
\DeclareMathOperator{\costar}{\textup{(Co$^{\star}$)}}
\DeclareMathOperator{\nc}{\textup{(nc)}}
\numberwithin{equation}{section} 
\title{On commutator relations in $2$-spherical RGD-systems}
\author{Sebastian \textit{Bischof}\footnote{email: sebastian.bischof@math.uni-giessen.de} \\
	Mathematisches Institut, Arndtstra\ss e 2, 35392 Gie\ss en, Germany}
\date{\today}
\begin{document}

\maketitle

\begin{abstract}
	In this paper we investigate the commutator relations for prenilpotent roots which are nested. These commutator relations are trivial in a lot of cases.
\end{abstract}

\section{Introduction}

In \cite{Ti92} Tits introduced RGD-systems in order to describe groups of Kac-Moody type. Let $(W, S)$ be a Coxeter system, let $\Phi$ be the associated set of roots (viewed as a set of half-spaces) and let $\Pi$ be a basis (i.e. a set of simple roots) of $\Phi$. An RGD-system of type $(W, S)$ is a pair $(G, (U_{\alpha})_{\alpha \in \Phi})$ consisting of a group $G$ and a family of subgroups $U_{\alpha}$ (called \textit{root groups}) indexed by the set of roots $\Phi$ satisfying a few axioms (for the precise definition see Section \ref{sec:rgd}). In this paper we are interested in \textit{$2$-spherical} RGD-systems (i.e. the order of $st$ in $W$ is finite for any $s, t \in S$). For $\alpha \neq \beta \in \Pi$ we define $X_{\alpha} := \langle U_{\alpha} \cup U_{-\alpha} \rangle$ and $X_{\alpha, \beta} := \langle X_{\alpha} \cup X_{\beta} \rangle$. A $2$-spherical RGD-system satisfies Condition $\costar$ if the following holds:
\begin{equation}
	X_{\alpha, \beta} / Z(X_{\alpha, \beta}) \not\cong B_2(2), G_2(2), G_2(3), ^2 F_4(2) \text{ for all pairs } \{ \alpha, \beta \} \subseteq \Pi. \tag{Co$^{\star}$}
\end{equation}
This condition was introduced by M\"uhlherr and Ronan in \cite{MR95}. In loc.cit. they give a formulation in terms of buildings.

One axiom of an RGD-system provides a commutator relation between $U_{\alpha}$ and $U_{\beta}$, where $\{ \alpha, \beta \} \subseteq \Phi$ is a pair of \textit{prenilpotent roots} (i.e. a pair of distinct roots where both $\alpha \cap \beta$ and $(-\alpha) \cap (-\beta)$ are non-empty sets). In the spherical case two roots $\alpha, \beta$ are prenilpotent if and only if $\alpha \neq -\beta$. In the non-spherical case this is no longer true. For a pair $\{ \alpha, \beta \}$ of prenilpotent roots we have either $o(r_{\alpha} r_{\beta}) <\infty$ or $o(r_{\alpha} r_{\beta}) = \infty$. In the first case the commutator relations between $U_{\alpha}$ and $U_{\beta}$ are completely determined by Tits and Weiss in \cite{TW02}. In the second case the pair $\{ \alpha, \beta \}$ is \textit{nested}, i.e. $\alpha \subsetneq \beta$ or $\beta \subsetneq \alpha$. In this paper we are interested in the commutator relations of root groups corresponding to nested roots in $2$-spherical RGD-systems of rank $3$. We will show that in a certain class of RGD-systems these commutator relations are trivial. This leads to the following definition:

An RGD-system satisfies Condition $\nc$, if the following holds:
\begin{equation}
	\forall \alpha \subsetneq \beta \in \Phi: [U_{\alpha}, U_{\beta}] = 1. \tag{nc}
\end{equation}

We prove the following theorem (cf. Proposition \ref{Propsimplylacedaffine} and Theorem \ref{Mainresult}):

\medskip
\noindent
\textbf{Theorem A:} Let $(W, S)$ be a $2$-spherical Coxeter system of rank $3$ and let $\mathcal{D} = (G, (U_{\alpha})_{\alpha \in \Phi})$ be an RGD-system of type $(W, S)$ satisfying Condition $\costar$. If $\mathcal{D}$ does not satisfy Condition $\nc$ then $\mathcal{D}$ is of type $(2, 4, 4), (2, 4, 6), (2, 4, 8), (2, 6, 6)$ or $(2, 6, 8)$.

\medskip
\noindent The following proposition shows that our theorem is optimal in the sense that there are RGD-systems for any type mentioned in Theorem A, where Condition $\costar$ does not imply Condition $\nc$ (see Remark \ref{ExampleC2tilde} and Section \ref{sec:Non-cyclic}):

\medskip
\noindent
\textbf{Proposition B:} There exist RGD-systems of type $(2, 4, 4), (2, 4, 6), (2, 4, 8), (2, 6, 6), (2, 6, 8)$, which satisfy Condition $\costar$, but do not satisfy Condition $\nc$.

\medskip
\noindent In the higher rank case we have the following result which is contained in Theorem \ref{Mainresult}:

\medskip
\noindent
\textbf{Theorem C:} Assume that the Coxeter diagram of $(W, S)$ is the complete graph or simply-laced (i.e. $m_{st} \in \{2, 3\}$ for all $s, t\in S$). Then in every RGD-system of type $(W, S)$ Condition $\costar$ implies Condition $\nc$.

\medskip
\noindent \textbf{Remarks:}

\smallskip
\noindent $1$. We believe that we can't drop Condition $\costar$ in general in Theorem A. But if the rank $3$ RGD-system is of affine type, we can drop Condition $\costar$ (see Proposition \ref{Propsimplylacedaffine}). Furthermore, Theorem A shows that Condition $\nc$ is a weaker condition than Condition $\costar$ for a $2$-spherical RGD-system of rank $3$ which is not of type $(2, 4, 4), (2, 4, 6), (2, 4, 8), (2, 6, 6), (2, 6, 8)$.

\smallskip
\noindent $2$. Let $(W, S)$ be of type $(k, l, m)$ and we assume that $2 \leq k \leq l \leq m \leq 8$. If $l \in \{3, 8\}$, then in an RGD-system of type $(W, S)$ Condition $\costar$ implies Condition $\nc$. Moreover, there are many $2$-spherical non-affine diagrams $(W, S)$ such that in an RGD-system of type $(W, S)$ Condition $\costar$ does not imply Condition $\nc$. Since $k, l, m \in \{2, 3, 4, 6, 8\}$, there are $125$ different types of RGD-systems of rank $3$ and in only five types Condition $\costar$ does not necessarily imply Condition $\nc$.

\smallskip
\noindent $3$. In Theorem C we can drop Condition $\costar$, if $(W, S)$ is simply-laced, since every simply-laced RGD-system automatically satisfies Condition $\costar$. Theorem C is proved by Allcock and Carbone for RGD-systems of simply-laced hyperbolic type associated to Kac-Moody groups over fields (see Lemma $6$ in \cite{AC16}). Our theorem includes these cases. Furthermore, we learned that Ted Williams produced results in \cite{Wi20} that are similar to our Theorem A and Proposition B. We also remark, that Condition $\nc$ often holds for RGD-systems, which come from Kac-Moody groups over fields (cf. Remark $3.7 (f)$ in \cite{Ti87}).

\renewcommand{\abstractname}{Acknowledgement}
\begin{abstract}
	The author thanks the anonymous referee for careful reading any many helpful comments.
\end{abstract}

\section{Preliminaries}

\subsection*{Coxeter systems}

Let $(W, S)$ be a Coxeter system and let $\ell$ denote the corresponding length function. Defining $w \sim_s w'$ if and only if $w^{-1}w' \in \langle s \rangle$ we obtain a chamber system with chamber set $W$ and equivalence relations $\sim_s$ for $s\in S$, which we denote by $\Sigma(W, S)$. We call two chambers $w, w'$ \textit{$s$-adjacent} if $w \sim_s w'$ and \textit{adjacent} if they are $s$-adjacent for some $s\in S$. A \textit{gallery of length $n$} from $w_0$ to $w_n$ is a sequence $(w_0, \ldots, w_n)$ of chambers where $w_i$ and $w_{i+1}$ are adjacent for any $0 \leq i < n$. A gallery $(w_0, \ldots, w_n)$ is called \textit{minimal} if there exists no gallery from $w_0$ to $w_n$ of length $k<n$ and we denote the length of a minimal gallery between $w_0$ and $w_n$ by $d(w_0, w_n)$. For $s, t \in S$ we denote the order of $st$ in $W$ by $m_{st}$. The \textit{Coxeter diagram} corresponding to $(W, S)$ is the labeled graph $(S, E(S))$, where $E(S) = \{ \{s, t \} \mid m_{st}>2 \}$ and where each edge $\{s,t\}$ is labeled by $m_{st}$ for all $s, t \in S$. The \textit{rank} of a Coxeter diagram is the cardinality of the set of its vertices. It is well-known that the pair $(\langle J \rangle, J)$ is a Coxeter system (cf. \cite[Ch. IV, §$1$ Th\'eor\`eme $2$]{Bo68}). For $J \subseteq S$ we define the \textit{$J$-residue} of a chamber $c\in W$ to be the set $c \langle J \rangle$. A \textit{residue} $R$ is a $J$-residue for some $J \subseteq S$; we call $J$ the \textit{type} of $R$ and the cardinality of $J$ is called the \textit{rank} of $R$. A \textit{panel} is a residue of rank $1$. It is a fact that for any chamber $x\in W$ and every residue $R$ there exists a unique chamber $z\in R$ such that $d(x, y) = d(x, z) + d(z, y)$ holds for any chamber $y\in R$. The chamber $z$ is called the \textit{projection} of $x$ onto $R$ and is denoted by $z = \proj_R x$.

Let $(W, S)$ be a $2$-spherical Coxeter system of rank $3$ and let $S = \{ r, s, t \}$. Sometimes we will also call $(m_{rs}, m_{rt}, m_{st})$ the \textit{type} of $(W, S)$. The Coxeter system $(W, S)$ is called \textit{affine} if $\frac{1}{m_{rs}} + \frac{1}{m_{rt}} + \frac{1}{m_{st}} =1$, and it is called \textit{hyperbolic} if $\frac{1}{m_{rs}} + \frac{1}{m_{rt}} + \frac{1}{m_{st}} <1$. A Coxeter system of rank $3$ is called \textit{cyclic} if the underlying Coxeter diagram is the complete graph.

\subsection*{Roots and walls}

A \textit{reflection} is an element of $W$ that is conjugated to an element of $S$. For $s\in S$ we let $\alpha_s := \{ w\in W \mid \ell(sw) > \ell(w) \}$ be the \textit{simple root} corresponding to $s$. A \textit{root} is a subset $\alpha \subseteq W$ such that $\alpha = v\alpha_s$ for some $v\in W$ and $s\in S$. We denote the set of all roots by $\Phi(W, S)$. The set $\Phi(W, S)_+ = \{ \alpha \in \Phi(W, S) \mid 1_W \in \alpha \}$ is the set of all \textit{positive roots} and $\Phi(W, S)_- = \{ \alpha \in \Phi(W, S) \mid 1_W \notin \alpha \}$ is the set of all \textit{negative roots}. For each root $\alpha \in \Phi(W, S)$ we denote the \textit{opposite root} by $-\alpha$ and we denote the unique reflection which interchanges these two roots by $r_{\alpha}$.

For a pair $\{ \alpha, \beta \}$ of prenilpotent roots we will write $\left[ \alpha, \beta \right] := \{ \gamma \in \Phi(W, S) \mid \alpha \cap \beta \subseteq \gamma \text{ and } (-\alpha) \cap (-\beta) \subseteq -\gamma \}$ and $(\alpha, \beta) := \left[ \alpha, \beta \right] \backslash \{ \alpha, \beta \}$.

For $\alpha \in \Phi(W, S)$ we denote by $\partial \alpha$ (resp. $\partial^2 \alpha$) the set of all panels (resp. spherical residues of rank $2$) stabilized by $r_{\alpha}$. Furthermore, we define $\mathcal{C}(\partial^2 \alpha) := \bigcup_{R \in \partial^2 \alpha} R$. Some authors call a panel $P \in \partial \alpha$ a \textit{boundary panel} of the root $\alpha$.

The set $\partial \alpha$ is called the \textit{wall} associated to $\alpha$. Let $G = (c_0, \ldots, c_k)$ be a gallery. We say that $G$ \textit{crosses the wall $\partial \alpha$} if there exists $1 \leq i \leq k$ such that $\{ c_{i-1}, c_i \} \in \partial \alpha$. It is a basic fact that a minimal gallery crosses a wall at most once (cf. Lemma $3.69$ in \cite{AB08}).

\begin{convention}
	For the rest of this paper we let $(W, S)$ be a $2$-spherical Coxeter system of finite rank and $\Phi := \Phi(W, S)$ (resp. $\Phi_+$ and $\Phi_-$).
\end{convention}

\subsection*{Reflection triangles and combinatorial triangles}

This subsection is based on \cite{CM05}.

A set $D$ a called \textit{fundamental domain} for the action of a group $G$ on a set $E$ containing $D$ if $\bigcup_{ g\in G } gD = E$ and $D \cap gD \neq \emptyset \Rightarrow g=1$ for any $g\in G$.

Let $\Psi$ be a set of roots. We put $R(\Psi) := \{ r_{\psi} \mid \psi \in \Psi \}$ and $W(\Psi) := \langle R(\Psi) \rangle$. The set $\Psi$ is called \textit{geometric} if $\bigcap_{\psi \in \Psi} \psi$ is non-empty and if for all $\phi, \psi \in \Psi$, the set $\phi \cap \psi$ is a fundamental domain for the action of $W(\{ \phi, \psi \})$ on $\Sigma(W, S)$.

A \textit{parabolic subgroup} of $W$ is a subgroup of the form $\Stab_W(R)$ for some residue $R$. The \textit{rank} of the parabolic subgroup is defined to be the rank of the residue $R$. A \textit{reflection triangle} is a set $T$ of three reflections such that the order of $tt'$ is finite for all $t, t' \in T$ and such that $T$ is not contained in any parabolic subgroup of rank $2$. It is a fact that given a reflection triangle $T$ there exists a reflection triangle $T'$ such that $\langle T \rangle = \langle T' \rangle$ and that $(\langle T \rangle, T')$ is a Coxeter system. This follows essentially from Proposition $4.2$ in \cite{MW02}. In particular, the Coxeter diagram of $(\langle T \rangle, T')$ is uniquely determined by $\langle T \rangle$ and we denote this diagram by $\mathcal{M}(T)$. 
We say that $T$ is \textit{affine} if $\mathcal{M}(T)$ is affine. A set of three roots $T$ is called \textit{combinatorial triangle} (or simply a \textit{triangle}) if the following hold:
\begin{enumerate}[label=(CT\arabic*), leftmargin=*]
	\item The set $\{ r_{\alpha} \mid \alpha \in T \}$ is a reflection triangle.
	
	\item The set $\bigcap_{\alpha \in T} \alpha$ is geometric.
\end{enumerate}

A triangle $T = \{ \alpha_0, \alpha_1, \alpha_2 \}$ is called a \textit{fundamental triangle} if $(-\alpha_i, \alpha_{i+1}) = \emptyset$ for any $0 \leq i \leq 2$, where the indices are taken modulo $3$. To avoid complicated subscripts we will write $r_i$ instead of $r_{\alpha_i}$ for $i\in \NN$ and a root $\alpha_i \in \Phi$.

\begin{lemma}\label{Theorem1.2CM}
	Let $T$ be an affine reflection triangle. Then there exists an irreducible affine parabolic subgroup $W_0 \leq W$ of rank $\geq 3$ such that $\langle T \rangle$ is conjugated to a subgroup of $W_0$.
\end{lemma}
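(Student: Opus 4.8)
The plan is to take $W_0$ to be the \emph{parabolic closure} of $\langle T\rangle$ and to check in turn that it has rank $\geq 3$, is irreducible, and is affine. Let $T' = \{s_0,s_1,s_2\}$ be the canonical reflection generators of the reflection subgroup $\langle T\rangle$, so that $(\langle T\rangle, T')$ is a Coxeter system whose diagram $\mathcal{M}(T)$ is, by hypothesis, one of the connected affine diagrams $\widetilde A_2$, $\widetilde C_2$, $\widetilde G_2$. Since the parabolic subgroups of $W$ are closed under intersection, there is a unique smallest parabolic subgroup $W_0 = \mathrm{Pc}(\langle T\rangle)$ containing $\langle T\rangle$; conjugating, I may assume $W_0 = \langle J\rangle$ is standard with $J\subseteq S$. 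Minimality of $W_0$ means exactly that $\langle T\rangle$ lies in no proper parabolic subgroup of $W_0$. Since, by definition, a reflection triangle lies in no parabolic subgroup of rank $2$ (and clearly in none of rank $\leq 1$), and $W_0$ is a parabolic subgroup containing $\langle T\rangle$, we obtain $\mathrm{rk}(W_0) = |J|\geq 3$ at once.

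For irreducibility I would argue by contradiction. Suppose $J = J_1\sqcup J_2$ with no edge joining $J_1$ to $J_2$, so $W_0 = \langle J_1\rangle\times\langle J_2\rangle$. Each $s_i$ is a reflection of $W$ contained in the standard parabolic $W_0$, hence a reflection of $(W_0,J)$; it is therefore $W_0$-conjugate to an element of $J$ and, the two factors being normal, lies entirely in $\langle J_1\rangle$ or in $\langle J_2\rangle$. Two of the $s_i$ lying in different factors would commute, forcing the corresponding label in $\mathcal{M}(T)$ to equal $2$; as $\mathcal{M}(T)$ is connected this is impossible, so all three generators lie in a common factor, say $\langle J_1\rangle$. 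But then $\langle T\rangle\subseteq\langle J_1\rangle$ is contained in a proper parabolic subgroup of $W_0$, contradicting minimality. Hence $W_0$ is irreducible.

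The affineness of $W_0$ is the heart of the matter and the step I expect to be hardest. The group $W_0$ is irreducible and, containing the infinite group $\langle T\rangle$, non-spherical; the task is to exclude indefinite type. Algebraically, I would pass to the Tits form $B = B_J$ of $(W_0,J)$: the generators $s_i$ give roots $\beta_0,\beta_1,\beta_2$ whose Gram matrix $\big(B(\beta_i,\beta_j)\big)$ is the Tits matrix of the affine system $(\langle T\rangle, T')$, hence positive semidefinite of corank $1$. Thus $B$ is positive semidefinite on $V := \mathrm{span}(\beta_0,\beta_1,\beta_2)$ with a one-dimensional radical spanned by an isotropic vector $\delta$, and $B(\delta,\beta_i)=0$ gives $s_i(\delta)=\delta$, so that $\delta$ is $\langle T\rangle$-fixed. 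When $|J| = 3$ the $\beta_i$ span $\mathbb{R}^J$, so $B = B|_V$ is positive semidefinite and degenerate and $W_0$ is affine of rank $3$. The genuine difficulty is the possibility $|J| > 3$, where $V\subsetneq\mathbb{R}^J$ and positive semidefiniteness of $B$ on $V$ does not by itself control $B$ elsewhere; one must exploit that $\langle T\rangle$ \emph{fills} $W_0$, i.e. that the supports of $\beta_0,\beta_1,\beta_2$ cover $J$.

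To overcome this obstacle I would switch to the geometry of the Davis complex $X_0$ of $W_0$, a complete $\mathrm{CAT}(0)$ space on which $W_0$ acts properly and cocompactly, so that every element acts as a semisimple isometry. The affine group $\langle T\rangle$ contains a translation lattice $L\cong\mathbb{Z}^2$, which acts on $X_0$ by commuting hyperbolic isometries; by the Flat Torus Theorem $L$ stabilizes a $2$-flat $F\subseteq X_0$ and acts cocompactly on it. The key input---the point at which I would invoke the structure theory of flats in Coxeter groups developed in the work this subsection follows---is that such a flat is contained in the Davis complex of an affine parabolic subgroup whose rank exceeds the dimension of the flat. This places $L$, and hence the finite extension $\langle T\rangle$, inside an affine parabolic subgroup $W_1$ of rank $\geq 3$. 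Since $W_0 = \mathrm{Pc}(\langle T\rangle)$ is the smallest parabolic containing $\langle T\rangle$ we have $W_0\subseteq W_1$; and any parabolic subgroup of $W$ contained in the irreducible affine group $W_1$ is a parabolic subgroup of $W_1$, hence either spherical or all of $W_1$. As $W_0$ contains the infinite group $\langle T\rangle$ it is not spherical, whence $W_0 = W_1$ is irreducible affine of rank $\geq 3$, as required.
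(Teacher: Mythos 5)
You should first know that the paper itself contains no argument for this lemma: its ``proof'' is a single sentence citing Theorem $1.2$ of \cite{CM05}, which in turn rests on a theorem from Krammer's thesis \cite{KDis}. So your proposal has to stand as a proof in its own right, and its skeleton --- parabolic closure $W_0=\mathrm{Pc}(\langle T\rangle)$, rank $\geq 3$ from the definition of a reflection triangle, irreducibility from connectedness of the rank-$3$ affine diagram, affineness via structure theory of $\mathbb{Z}^2$-subgroups --- is indeed the skeleton of the cited proof. Your first three steps are correct, up to one cosmetic repair: a single label $2$ does not contradict connectedness of $\mathcal{M}(T)$ (the affine types $(2,4,4)$ and $(2,3,6)$ contain labels $2$); the correct statement is that the partition of the canonical generators according to the two direct factors admits no diagram edge across it, so connectedness forces one part to be empty. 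The $|J|=3$ case of the affineness step is also fine.

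The genuine gaps are the two unproved claims carrying the case $|J|>3$, and the first of them is false as stated. (i) It is not true that an $L$-cocompact $2$-flat must lie in the Davis complex of a single irreducible affine parabolic of rank $\geq 3$: in $W$ of type $\tilde A_1\times\tilde A_1$ the whole Davis complex is a $2$-flat and no such parabolic exists; and even within the paper's $2$-spherical convention, where every irreducible affine parabolic automatically has rank $\geq 3$, a $\mathbb{Z}^2$ can sit diagonally in a product of two irreducible affine parabolics (take $W$ of type $\tilde A_2\times\tilde A_2$, one generator translating in each factor), and its flat is contained in the Davis complex of neither factor. What Krammer's theorem --- the result your ``key input'' would have to be, and the one the paper's citation chain bottoms out at --- actually provides is a parabolic that is a \emph{direct product} of irreducible affine parabolics and a finite one, possibly with several affine factors. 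To pass from that correct, weaker statement to your conclusion you must re-use the irreducibility you established: $W_0$, being an irreducible infinite parabolic contained in such a product, lies in a single affine factor $A_i$, and then $W_0=A_i$ because every proper parabolic of an irreducible affine Coxeter group is finite. This bridge is absent from your write-up, and without it the invocation of flat-structure theory proves nothing. (ii) Independently, the inference ``$L\leq W_1$, hence the finite extension $\langle T\rangle\leq W_1$'' is invalid: a group containing a finite-index subgroup of $\langle T\rangle$ need not contain $\langle T\rangle$. The natural repair --- $L$ is characteristic in $\langle T\rangle$, so $\langle T\rangle$ normalizes $\mathrm{Pc}(L)$ --- only places $\langle T\rangle$ in the normalizer of an infinite parabolic, and passing from the normalizer to the parabolic itself (or to a slightly larger parabolic of the same product shape) requires an actual argument that you do not supply. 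Both repairs can be carried out, but they constitute precisely the content of \cite{CM05}; as written, your proof delegates the hard part to a statement that is not a theorem.
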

\begin{proof}
	This is Theorem $1.2$ of \cite{CM05}, which is essentially based on a result in \cite{KDis}.
\end{proof}

Let $T = \{ \alpha_0, \alpha_1, \alpha_2 \}$ be a triangle and let $\sigma_i \in \partial^2 \alpha_{i-1} \cap \partial^2 \alpha_{i+1}$ (the indices are taken modulo $3$) which is contained in $\alpha_i$. The set $\{ \sigma_0, \sigma_1, \sigma_2 \}$ is called a \textit{set of vertices} of $T$. For $0 \leq i \neq j \leq 2$ we let $x_{i, j}$ be the unique chamber of $\proj_{\sigma_i} \sigma_j$ which belongs to $\alpha_k$ where $i \neq k \neq j$. Using Lemma $2.3$ of \cite{CM05}, there exists for each $0 \leq i \leq 2$ a minimal gallery $\Gamma_i$ from $x_{i-1, i+1}$ to $x_{i+1, i-1}$ such that every element belongs to $\C(\partial^2 \alpha_i) \cap \alpha_i$. Let also $\tilde{\Gamma}_i$ be the unique minimal gallery from $x_{i, i+1}$ to $x_{i, i-1}$. Finally, let $\Gamma$ be the gallery obtained by concatenating the $\Gamma_i$'s and the $\tilde{\Gamma}_i$'s, i.e.
\[ \Gamma = \Gamma_0 \sim \tilde{\Gamma}_1 \sim \Gamma_2 \sim \tilde{\Gamma}_0 \sim \Gamma_1 \sim \tilde{\Gamma}_2. \]
Then $\Gamma$ is a closed gallery by construction and we say that $\Gamma$ \textit{skirts around} the triangle $T$ and that the vertices $\{ \sigma_0, \sigma_1, \sigma_2 \}$ \textit{supports} $\Gamma$. The \textit{perimeter} of $T$ is the minimum of the set of all lengths of all galleries that skirt around $T$. In the next lemma we use the same induction as in Lemma $5.2$ of \cite{CM05} but we deduce different things.

\begin{proposition}\label{completefundamentaltriangle}
	Assume that the Coxeter diagram is the complete graph and let $T$ be a triangle. Then $T$ is fundamental.
\end{proposition}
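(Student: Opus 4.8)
The plan is to argue by a minimal counterexample and to run the same induction on the perimeter as in Lemma~$5.2$ of \cite{CM05}. Suppose some triangle fails to be fundamental, and among all such pick $T = \{\alpha_0, \alpha_1, \alpha_2\}$ of minimal perimeter. After relabelling the indices cyclically, not being fundamental means that there is a root $\gamma \in (-\alpha_0, \alpha_1)$; unwinding the definition of the interval, this says $(-\alpha_0) \cap \alpha_1 \subseteq \gamma$ and $\alpha_0 \cap (-\alpha_1) \subseteq -\gamma$ with $\gamma \notin \{-\alpha_0, \alpha_1\}$. First I would record what this means at the vertex $\sigma_2 \in \partial^2 \alpha_1 \cap \partial^2 \alpha_0$, where the walls $\partial\alpha_0$ and $\partial\alpha_1$ meet. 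In the rank-$3$ picture these two walls cut out four sectors around $\sigma_2$: the interior $\alpha_0 \cap \alpha_1$, its opposite $(-\alpha_0) \cap (-\alpha_1)$, and the two ``side'' sectors $(-\alpha_0)\cap \alpha_1$ and $\alpha_0 \cap (-\alpha_1)$; the condition on $\gamma$ says precisely that $\partial\gamma$ separates the two side sectors.

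The key step is to show that $\partial\gamma$ must stabilize $\sigma_2$, i.e. $r_\gamma \in \langle r_0, r_1 \rangle$. The underlying reason is a separation fact: a wall separating the two side sectors cannot avoid $\sigma_2$, since as soon as $\partial\gamma$ misses $\sigma_2$ a whole neighbourhood of $\sigma_2$ lies on one side of $\partial\gamma$, so the two side sectors — both accumulating at $\sigma_2$ — would end up on the same side, contradicting the separation. I would make this rigorous with the closed gallery $\Gamma$ that skirts around $T$ and realises its perimeter: the subgallery $\tilde\Gamma_2$ turning around $\sigma_2$ runs from $x_{2,0}$ to $x_{2,1}$ and crosses exactly the walls of $\langle r_0, r_1\rangle$ lying between the two edges. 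If $\partial\gamma$ did not pass through $\sigma_2$, it would have to be crossed by $\Gamma$ along one of the edge subgalleries $\Gamma_0$ or $\Gamma_1$ rather than at the vertex, and then, exactly as in the induction of \cite[Lemma~$5.2$]{CM05}, replacing one of $\alpha_0, \alpha_1$ by $\gamma$ would yield a combinatorial triangle of strictly smaller perimeter, still failing to be fundamental. This contradicts minimality and so forces $r_\gamma \in \langle r_0, r_1\rangle$.

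Once $r_\gamma \in \langle r_0, r_1\rangle$, the root $\gamma$ is one of the finitely many roots of the spherical residue $\sigma_2$, that is, a root of the dihedral group $\langle r_0, r_1\rangle$. Here axiom (CT2) enters: since $\bigcap_{i} \alpha_i$ is geometric, $\alpha_0 \cap \alpha_1$ is a fundamental domain for $\langle r_0, r_1\rangle$, so $\alpha_0$ and $\alpha_1$ are adjacent roots of this dihedral group, meeting at the minimal angle $\pi/m$ with $2m = |\langle r_0, r_1 \rangle|$. A direct computation inside the dihedral group then shows that among its $2m$ roots the only $\delta$ with $(-\alpha_0)\cap\alpha_1 \subseteq \delta$ and $\alpha_0 \cap(-\alpha_1) \subseteq -\delta$ are $\delta = -\alpha_0$ and $\delta = \alpha_1$ themselves. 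Hence $\gamma \in \{-\alpha_0, \alpha_1\}$, contradicting $\gamma \in (-\alpha_0, \alpha_1)$. Therefore no such $\gamma$ exists and $T$ is fundamental.

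I expect the main obstacle to be the middle step: turning the separation statement into a clean combinatorial reduction, and in particular verifying that the replacement triple is again a triangle (that axioms (CT1) and (CT2) survive) and that its perimeter genuinely drops, so that the induction of \cite[Lemma~$5.2$]{CM05} can be imported essentially verbatim. This is also where I expect the completeness of the Coxeter diagram to be used — to keep the reduced triple a genuine triangle and to exclude the degenerate configurations that would otherwise let a corner-separating wall escape the vertex $\sigma_2$; the occurrence of an $m_{st} = 2$ is precisely the feature exploited by the exceptional types of Theorem~A, so I would not expect the reduction to go through without the complete-graph hypothesis. The bookkeeping of which walls $\Gamma$ crosses, together with the verification that the modified skirting gallery is shorter, should be the technical heart of the proof.
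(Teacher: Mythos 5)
Your induction on the perimeter is indeed the paper's frame, and your separation intuition is correct --- in fact it can be made rigorous with no appeal to minimality at all: the residue $\sigma_2$ meets all four sectors cut out by $\partial\alpha_0$ and $\partial\alpha_1$, so a minimal gallery inside the (convex) residue $\sigma_2$ from a chamber of $(-\alpha_0)\cap\alpha_1\cap\sigma_2$ to a chamber of $\alpha_0\cap(-\alpha_1)\cap\sigma_2$ must cross $\partial\gamma$; hence $\partial\gamma$ contains a panel of $\sigma_2$ and $r_\gamma$ stabilizes $\sigma_2$, unconditionally. In particular your second case never occurs. As written, its treatment is moreover logically inverted: you assert that the replacement triangle of smaller perimeter ``still fails to be fundamental'', but nothing forces a witness of non-fundamentality onto the smaller triangle (cutting a non-fundamental triangle typically produces fundamental pieces, as one sees already in type $\tilde A_2$), so no contradiction with minimality arises. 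Induction can only be used in the opposite direction, namely to conclude that the smaller triangles \emph{are} fundamental.

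The fatal gap is in your last step. Stabilizing $\sigma_2$ only gives $r_\gamma \in \Stab_W(\sigma_2)$, which is a dihedral group of order $2m_{st}$ (with $\{s,t\}$ the type of $\sigma_2$), and this may properly contain $\langle r_0,r_1\rangle$, whose order is $2\,o(r_0 r_1)$ with $o(r_0 r_1)$ merely a divisor of $m_{st}$. Geometricity makes $\alpha_0\cap\alpha_1$ a fundamental domain for $\langle r_0,r_1\rangle$ only; it does not prevent walls of the residue from running strictly inside this sector, which happens exactly when $o(r_0r_1)<m_{st}$, and such a wall yields a root of $(-\alpha_0,\alpha_1)$ that your dihedral computation never sees. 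This is not hypothetical: in type $(2,4,4)$ take the walls $x=0$, $y=0$, $x+y=2$ in the Euclidean plane; the corresponding triple is a combinatorial triangle (both (CT1) and geometricity hold), yet the diagonal $x=y$ through the right-angle vertex lies in $(-\alpha_0,\alpha_1)$. Your argument would declare this triangle fundamental --- and indeed your proof never invokes the complete-graph hypothesis at the decisive moment, although the proposition fails without it. The paper excludes precisely these through-the-vertex witnesses by the induction itself: from $\beta\in(-\alpha_i,\alpha_j)$ it builds $T_+=\{\alpha_i,\beta,\alpha_k\}$ and $T_-=\{-\beta,\alpha_j,\alpha_k\}$, both of perimeter $<p$, concludes by induction that both are fundamental, hence $(\beta,\alpha_k)=\emptyset=(-\alpha_k,\beta)$, and two crossing walls with both intervals empty force $m_{\beta,\alpha_k}=2$, contradicting $m_{st}\neq 2$. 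So completeness enters at the very end, to contradict the emptiness of those two intervals --- not, as you expected, to keep the reduced triple a triangle or to force the wall through the vertex.
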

\begin{proof}
	Let $T = \{ \alpha_0, \alpha_1, \alpha_2 \}, \{ 0, 1, 2 \} = \{ i, j, k \}$, let $p$ be the perimeter of $T$, let $\Gamma$ be a closed gallery of length $p$ which skirts around $T$ and let $\{ \sigma_0, \sigma_1, \sigma_2 \}$ be a set of vertices that supports $\Gamma$. We prove the hypothesis by induction on the length $p$. If $p=0$ then $\sigma_0, \sigma_1, \sigma_2$ contain a common chamber. Then $\bigcap_{\alpha \in T} \alpha$ is a chamber and hence $T$ is fundamental. Thus we assume $p>0$. Assume that $T$ is not fundamental and that $\beta \in (-\alpha_i, \alpha_j)$. Then $o(r_{\beta} r_k) < \infty$ and we obtain the reflection triangles $T_+ := \{ \alpha_i, \beta, \alpha_k \}$ and $T_- := \{ -\beta, \alpha_j, \alpha_k \}$.
	Let $\sigma \in \partial^2 \alpha_k \cap \partial^2 \beta$ such that $\sigma$ is crossed by $\Gamma$ (i.e. there exists a root $\alpha$ such that $\sigma \in \partial^2 \alpha$ and $\partial \alpha$ is crossed by $\Gamma$). Then the perimeter of $T_+$ and $T_-$ is $<p$ and we can use induction. This implies that $T_+$ and $T_-$ are fundamental triangles and hence $(\beta, \alpha_k) = \emptyset = (-\alpha_k, \beta)$. Since $m_{st} \neq 2$ for any $s \neq t \in S$ this yields a contradiction.
\end{proof}

\section{Triangles in the hyperbolic plane}

In this section we consider $\Sigma(W, S)$, where $(W, S)$ is hyperbolic and of rank $3$. In this case $\Sigma(W, S)$ has a geometric realization whose underlying metric space is $\mathbb{H}^2$. For more details see the discussion in the proof of Theorem $14$ in \cite{CR09}.

Let $\alpha, \beta \in \Phi$ such that $o(r_{\alpha}r_{\beta}) < \infty$. Then $r_{\alpha}$ and $r_{\beta}$ have a unique common fixed point in $\mathbb{H}^2$. We denote the angle counterclockwise between $r_{\alpha}$ and $r_{\beta}$ divided by $\pi$ by $\angle r_{\alpha} r_{\beta}$:
\[ \begin{tikzpicture}[scale=0.8]


\draw[line width=0.01mm, domain=-2:2, smooth] plot (\x, {pow(0.8*2.71828, \x)});
\draw (2, 4.5) node [right]{$r_{\alpha}$};

\draw[line width=0.01mm, domain=-2:2, smooth] plot (\x, {pow(0.8*2.71828, -\x)});
\draw (2, 0.1) node [right]{$r_{\beta}$};

\draw[domain=-2:2] plot (\x, 0.8*\x+1);	
\draw[domain=-2:2] plot (\x, -0.8*\x+1);

\coordinate (A) at (-2, 2.6);
\coordinate (B) at (2, 2.6);
\coordinate (M) at (0, 1);
\coordinate (C) at (2, -0.6);

\tikzset{anglestyle/.style={angle eccentricity=1.5, draw,  thick, angle radius=0.5cm}}

\draw 
pic ["$\angle r_{\alpha} r_{\beta}$", anglestyle] {angle = B--M--A};


\draw[line width=0.01mm, domain=5:9, smooth] plot (\x, {pow(0.8*2.71828, \x-7)});
\draw (9, 4.5) node [right]{$r_{\alpha}$};

\draw[line width=0.01mm, domain=5:9, smooth] plot (\x, {pow(0.8*2.71828, -\x+7)});
\draw (9, 0.1) node [right]{$r_{\beta}$};

\draw[domain=5:9] plot (\x, 0.8*\x -5.6+1);	
\draw[domain=5:9] plot (\x, -0.8*\x +5.6+1);

\coordinate (A1) at (5, 2.6);
\coordinate (B1) at (9, 2.6);
\coordinate (M1) at (7, 1);
\coordinate (C1) at (9, -0.6);

\tikzset{anglestyle/.style={angle eccentricity=1.5, draw,  thick, angle radius=0.5cm}}

\draw 
pic ["$\angle r_{\beta} r_{\alpha}$", anglestyle, angle eccentricity=2.5] {angle = C1--M1--B1};

\end{tikzpicture} \]

Note that $\angle r_{\alpha} r_{\beta} + \angle r_{\beta} r_{\alpha} = 1$ and that in general $\angle r_{\alpha} r_{\beta} \neq \angle r_{\beta} r_{\alpha}$. Since we consider RGD-systems we can assume that every angle between two reflections having a unique common fixed point is in the following set (cf. \cite[$(17.1)$ Theorem]{TW02}): $\{ \frac{\pi}{8}, \frac{\pi}{6}, \frac{\pi}{4}, \frac{\pi}{3}, \frac{3\pi}{8}, \frac{\pi}{2}, \frac{5\pi}{8}, \frac{2\pi}{3}, \frac{3\pi}{4}, \frac{5\pi}{6}, \frac{7\pi}{8} \}$. In $\mathbb{H}^2$ every triangle has interior angle sum strictly less than $\pi$ and every quadrangle has interior angle sum strictly less than $2\pi$. It is a fact that two different lines of $\mathbb{H}^2$ intersect in at most one point of $\mathbb{H}^2$. Thus for two roots $\alpha, \beta \in \Phi$ such that $o(r_{\alpha} r_{\beta}) < \infty$ there exists a unique residue $R$ of rank $2$ which is stabilized by $r_{\alpha}$ and $r_{\beta}$. Let $\{ s, t \}$ be the type of $R$. Then we define $m_{\alpha, \beta} := m_{st}$ and we remark that $o(r_{\alpha} r_{\beta})$ divides $m_{\alpha, \beta}$.

\begin{corollary}\label{uniquechamberhyperbolicrank3}
	Every triangle contains a unique chamber, if the Coxeter diagram is cyclic hyperbolic.
\end{corollary}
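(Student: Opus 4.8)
The plan is to prove that $\bigcap_{\alpha \in T}\alpha$ consists of a single chamber. Nonemptiness is already part of the hypothesis that $T$ is a triangle (condition (CT2) makes $\bigcap_{\alpha\in T}\alpha$ geometric, in particular nonempty), so only uniqueness is at stake. Since the diagram is cyclic it is the complete graph, so Proposition \ref{completefundamentaltriangle} applies and $T$ is a fundamental triangle; this is the input I would exploit throughout.

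Next I would work in the geometric realization in $\mathbb{H}^2$ from the beginning of this section. Writing $T = \{\alpha_0,\alpha_1,\alpha_2\}$, the three walls $\partial\alpha_i$ are geodesics. Because $\{r_0,r_1,r_2\}$ is a reflection triangle, each pair $r_{i-1},r_{i+1}$ has finite product order, so the corresponding walls meet in the unique point $\sigma_i$; moreover the three walls cannot share a common point, for otherwise $r_0,r_1,r_2$ would all lie in the stabilizer of one rank $2$ residue, contradicting (CT1). Hence the three geodesics bound a compact geodesic triangle $\Delta$, and $\bigcap_{\alpha\in T}\alpha$ is exactly the set of chambers whose realization lies in $\Delta$. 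The geometric condition (CT2) forces $\alpha_{i-1}\cap\alpha_{i+1}$ to be a fundamental domain for the dihedral group $\langle r_{i-1},r_{i+1}\rangle$, so the interior angle of $\Delta$ at $\sigma_i$ equals $\pi/o(r_{i-1}r_{i+1})$. Since $T$ is fundamental, computing the interval $[-\alpha_i,\alpha_{i+1}]$ shows that its elements are precisely the walls through $\partial\alpha_i\cap\partial\alpha_{i+1}$ lying in the cone $\alpha_i\cap\alpha_{i+1}$, so the vanishing of $(-\alpha_i,\alpha_{i+1})$ is equivalent to $\langle r_i,r_{i+1}\rangle$ being the full stabilizer of that vertex. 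Consequently every interior angle of $\Delta$ is a full chamber angle and exactly one chamber of $\Delta$ meets each corner.

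It remains to show that $\Delta$ is a single chamber. By the Gauss--Bonnet formula the number $N$ of chambers contained in $\Delta$ equals the ratio of the area of $\Delta$ to the area of a chamber, a positive integer; equivalently $N=[W:\langle T\rangle]$, and the corollary is the assertion $N=1$. I would establish this by showing that the three corners $\sigma_0,\sigma_1,\sigma_2$ are the three vertices of one chamber, i.e. that they carry the three distinct types $\{s,t\}$ of rank $2$ residues, for then the two areas coincide and $N=1$.

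The main obstacle is exactly this last step, that is, ruling out $N\ge 2$. I expect to argue by induction on the perimeter, in the spirit of Proposition \ref{completefundamentaltriangle}. If $N\ge 2$ then the unique chamber at some corner $\sigma_i$ does not reach both neighbouring corners, so its side opposite $\sigma_i$ lies on an interior wall $\partial\gamma$ that crosses the two sides of $\Delta$ through $\sigma_i$ in two interior boundary vertices; this exposes a combinatorial triangle of strictly smaller perimeter (the corner chamber) together with a residual quadrangle. Applying the induction hypothesis to the corner piece and bookkeeping the angles of the quadrangle, one should reach a contradiction with the hyperbolic bounds recalled above --- triangle angle sum $<\pi$ and quadrangle angle sum $<2\pi$ --- together with the facts that all admissible angles lie in the prescribed discrete set and that $m_{st}\ge 3$ for all $s\ne t$ because the diagram is cyclic. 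The delicate point, and where the cyclic hyperbolic hypothesis is genuinely used, is controlling the two new angles created where $\partial\gamma$ meets the sides of $\Delta$, since only then is the quadrangle forced to be degenerate and the corner chamber forced to fill $\Delta$; this is the analogue of the final contradiction via $m_{st}\ne 2$ in the proof of Proposition \ref{completefundamentaltriangle}.
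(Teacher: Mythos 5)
Your preparatory steps are correct: by Proposition \ref{completefundamentaltriangle} the triangle is fundamental; fundamentality translates, in the $\mathbb{H}^2$-realization, into the statement that each interior angle of the compact geodesic triangle $\Delta$ is the minimal angle $\pi/m_{st}$ of the rank-$2$ residue at that corner; and the Gauss--Bonnet comparison correctly reduces the corollary to showing that the three corners realize the three distinct edge labels of the diagram (equivalently, that the number $N$ of chambers in $\Delta$ is $1$). But this last claim is the entire content of the corollary, and your proposal does not prove it: the passage beginning ``I expect to argue by induction on the perimeter'' is a plan rather than an argument. For comparison, the paper does not attempt a direct argument either; its proof of Corollary \ref{uniquechamberhyperbolicrank3} consists of invoking the classification of Coxeter decompositions of hyperbolic triangles in \cite{Fe98} (Figure $8$ in \S $5.1$ of loc.cit.), of which the assertion $N=1$ for cyclic hyperbolic diagrams is a special case.

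Moreover, the induction you sketch would fail structurally, not merely for lack of detail. In the proof of Proposition \ref{completefundamentaltriangle} the cutting wall passes through a corner of the triangle (it comes from a root in some $(-\alpha_i, \alpha_j)$), so it splits the triangle into two smaller combinatorial triangles, and induction applies to both pieces. In your situation that mechanism is unavailable by design: since $T$ is already fundamental, no wall crosses $\Delta$ through a corner, and the wall $\partial\gamma$ carrying the third side of a corner chamber misses all three corners of $\Delta$. It therefore splits $\Delta$ into a single chamber and a quadrangle, and neither the corollary being proved nor Proposition \ref{completefundamentaltriangle} says anything about quadrangles, so there is nothing to which the induction hypothesis can be applied. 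Nor do the bounds you invoke yield the desired contradiction on their own: a hyperbolic quadrangle all of whose angles lie in the admissible discrete set, for instance one with four angles equal to $\frac{\pi}{3}$, has angle sum $\frac{4\pi}{3} < 2\pi$ and violates nothing, even with $m_{st} \geq 3$ for all $s \neq t$. An honest completion must generate further interior walls inside the residual quadrangle and carry out a case analysis of the resulting configurations --- this is in effect what the classification in \cite{Fe98} does --- and that work is absent from your proposal.
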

\begin{proof}
	This is a consequence of the classification in \cite{Fe98} (cf. Figure $8$ in $\S 5.1$ in loc.cit).
\end{proof}

\begin{remark}
For a fundamental triangle $T = \{ \alpha, \beta, \gamma \}$ in a Coxeter system of hyperbolic type and of rank $3$, the Coxeter system is of type $(m_{\alpha, \beta}, m_{\alpha, \gamma}, m_{\beta, \gamma})$ (cf. \cite{Fe98} and the previous corollary). In particular, $\bigcap_{\epsilon \in T} \epsilon$ is a fundamental domain for the action of $W$ on the geometric realization of $\Sigma(W, S)$.
\end{remark}

\begin{lemma}\label{angle1over2}
Let $T = \{ \alpha_0, \alpha_1, \alpha_2 \}$ be a triangle and let $\{ i, j, k \} = \{ 0, 1, 2 \}$. Let $\alpha_3 \in (-\alpha_i, \alpha_j)$. Then the following hold:
\begin{enumerate}[label=(\alph*)]
	\item We have $\angle r_k r_3 \neq \frac{5}{8} \neq \angle r_3 r_k$.
	
	\item We have $\angle r_k r_3 = \angle r_3 r_k = \frac{1}{2}$ if the diagram is not of type $(2, 3, 8)$. In particular, we have $(-\alpha_i, \alpha_k) = (-\alpha_j, \alpha_k) = \emptyset$ and $(-\alpha_i, \alpha_j) = \{ \alpha_3 \}$.
\end{enumerate}
\end{lemma}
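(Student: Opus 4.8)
The plan is to translate the hypothesis into a picture in $\hh^2$ and then read off the angle at the foot of a cevian. Since $\alpha_3 \in (-\alpha_i, \alpha_j)$, the wall $\partial\alpha_3$ passes through the vertex $v$ where $\partial\alpha_i$ and $\partial\alpha_j$ cross (two lines meet in at most one point, and $\alpha_3$ separates the two vertical sectors at $v$, so $r_i, r_j, r_3$ all lie in the dihedral group fixing $v$); moreover its direction lies strictly inside the sector $\alpha_i \cap \alpha_j$, so $\partial\alpha_3$ enters the interior of the triangle $\bigcap_{\alpha \in T}\alpha$ and must leave it through the opposite wall $\partial\alpha_k$, say at a point $w$. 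This cevian splits $T$ into two reflection triangles $T_+ = \{\alpha_i, \alpha_3, \alpha_k\}$ and $T_- = \{-\alpha_3, \alpha_j, \alpha_k\}$ meeting along $\partial\alpha_3$, and the two interior angles they form at $w$ are supplementary. These two angles are precisely $\pi\cdot\angle r_k r_3$ and $\pi\cdot\angle r_3 r_k$, which recovers the identity $\angle r_k r_3 + \angle r_3 r_k = 1$; the task is to pin down their actual values.

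First I would record the discreteness and the curvature constraints. Every angle between two crossing walls lies in the admissible set $\{c/m : m \in \{2,3,4,6,8\},\ 1 \le c \le m-1\}$, and an angle with denominator $8$ in lowest terms, in particular $5/8$, can occur only in a residue whose label is $8$. In $\hh^2$ each of $T, T_+, T_-$ has interior angle sum strictly below $\pi$: writing $\phi_1 + \phi_2$ for the angle of $T$ at $v$ (with $\phi_1, \phi_2$ the two angles cut out by $\partial\alpha_3$) and $\mu, \nu$ for the angles of $T$ at the other two vertices, one gets $\phi_1 + \mu + \pi\angle r_k r_3 < \pi$ and $\phi_2 + \nu + \pi\angle r_3 r_k < \pi$. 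Abstractly these do not yet exclude the value $5/8$, so the essential input is that the labels at $v$, at $w$ and at the two base vertices are not independent: all walls belong to a single Coxeter complex of a fixed hyperbolic type $(2,b,c)$ with $b,c \in \{3,4,6,8\}$ and $1/b + 1/c < 1/2$ (by Corollary \ref{uniquechamberhyperbolicrank3} the cyclic case makes the hypothesis vacuous, so a commuting pair is necessarily present).

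The model configuration I would keep in mind is the triangle obtained by gluing the fundamental chamber to its mirror image across the wall of the commuting generator: because that wall is perpendicular to the (doubled) opposite wall, the cevian $\partial\alpha_3$ meets $\partial\alpha_k$ at a right angle, so $\angle r_k r_3 = \angle r_3 r_k = 1/2$, and exactly one root lies between $-\alpha_i$ and $\alpha_j$. The content of (b) is that, outside type $(2,3,8)$, every admissible configuration is forced to this one. I would prove this by running through the finitely many types $(2,b,c)$: the label of $v$ restricts $\theta = \phi_1 + \phi_2$ and hence $\phi_1,\phi_2$, the label of $w$ restricts $\angle r_k r_3$, and the three strict angle-sum inequalities together with $\angle r_k r_3 + \angle r_3 r_k = 1$ and admissibility leave $1/2$ as the only survivor, except in $(2,3,8)$, where an extra triangle with $v$ a label-$8$ vertex appears. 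Once $\partial\alpha_3 \perp \partial\alpha_k$ is known, $T$ is this two-chamber triangle, its remaining two vertices carry the minimal angles with adjacent walls, and this yields $(-\alpha_i,\alpha_k) = (-\alpha_j,\alpha_k) = \emptyset$ together with the uniqueness $(-\alpha_i,\alpha_j) = \{\alpha_3\}$.

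Part (a) must be argued independently, since it is claimed also in type $(2,3,8)$. Here I would assume $\angle r_k r_3 = 5/8$ or $\angle r_3 r_k = 5/8$; then $w$ is a label-$8$ vertex and one of $T_+, T_-$ carries the obtuse angle $5\pi/8$ there, forcing the sum of its other two angles below $3\pi/8$. I would then use the label structure of the four types containing an $8$-edge, namely $(2,3,8), (2,4,8), (2,6,8), (2,8,8)$, to show that the two small angles cannot be realised by walls of the triangle without violating an $\hh^2$ angle-sum bound (for a triangle, or for a quadrangle after adjoining a further boundary wall, using the bound $<2\pi$). I expect the delicate, and main, obstacle to be exactly this book-keeping: converting the combinatorial label constraints of each diagram into admissible angle values at $v$, $w$ and the two base vertices, and verifying that no assignment other than the perpendicular one (for (b)) and none containing a $5/8$ angle (for (a)) survives the strict hyperbolic inequalities, while correctly isolating $(2,3,8)$ as the single exception.
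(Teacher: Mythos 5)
Your geometric setup is sound and matches the intended framework: since $\alpha_3 \in (-\alpha_i,\alpha_j)$, its wall must separate the two opposite sectors at $v = \partial\alpha_i \cap \partial\alpha_j$, hence passes through $v$, enters the triangle and crosses the opposite wall $\partial\alpha_k$ at a point $w$, and the two supplementary angles at $w$ are $\pi\angle r_k r_3$ and $\pi\angle r_3 r_k$; the reduction to the non-cyclic types $(2,b,c)$ via Corollary \ref{uniquechamberhyperbolicrank3} is also legitimate. But from that point on your text is a plan rather than a proof: the entire content of the lemma is the verification you defer (``running through the finitely many types \ldots leave $1/2$ as the only survivor'', ``show that the two small angles cannot be realised''), and you yourself flag this bookkeeping as the main obstacle. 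The paper settles exactly this step by quoting the classification of triangles in hyperbolic Coxeter groups (\cite{Fe98}, Figure $8$ in \S 5.1), so this is precisely the part that cannot be waved through.

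Moreover, for part (a) the tools you list would not suffice as stated. If $\angle r_k r_3 = \frac{5}{8}$, the other two angles of the relevant triangle need only sum to less than $\frac{3\pi}{8}$, and the assignment in which both equal $\frac{\pi}{8}$ (available whenever the other two vertices carry label $8$, e.g.\ in types $(2,3,8)$ and $(2,8,8)$) violates no triangle angle-sum bound, since $\frac{5}{8}+\frac{1}{8}+\frac{1}{8} = \frac{7}{8} < 1$; it is also not clear which quadrangle you would adjoin to exclude it. The contradiction requires an extra device: because $w$ has label $8$, there is a root $\alpha_4 \in (\alpha_3,\alpha_k)$ subdividing the obtuse angle, say with $\angle r_k r_4 = \frac{3}{8}$ and $\angle r_4 r_3 = \frac{2}{8}$; its wall must then cross $\partial\alpha_j$, and the two triangle inequalities $1 > \angle r_3 r_j + \angle r_j r_4 + \angle r_4 r_3$ and $1 > \angle r_j r_k + \angle r_k r_4 + \angle r_4 r_j$, together with admissibility of the values, force $\angle r_j r_4 + \angle r_4 r_j \leq \frac{1}{2} + \frac{3}{8} < 1$, contradicting $\angle r_j r_4 + \angle r_4 r_j = 1$. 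Nothing in your sketch produces this (or an equivalent) mechanism. Similarly, the ``in particular'' statements of (b) need their own arguments --- for instance, applying the first assertion to a putative root in $(-\alpha_j,\alpha_k)$ and deriving an angle-sum contradiction --- rather than the unsupported claim that $T$ is the doubled fundamental chamber, which does not follow from perpendicularity of the cevian alone. So the proposal has the right frame, but there is a genuine gap exactly where the proof should be.
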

\begin{proof}
	This is also a consequence of the classification in \cite{Fe98} (cf. Figure $8$ in $\S 5.1$ in loc.cit).
\end{proof}

\begin{proposition}\label{Prop238}
Assume that the Coxeter diagram is of type $(2, 3, 8)$. Let $T = \{ \alpha_0, \alpha_1, \alpha_2 \}$ be a triangle and let $\{ i, j, k \} = \{ 0, 1, 2 \}$. If there exists a root $\alpha_3 \in (-\alpha_i, \alpha_j)$ such that the angle between $r_k$ and $r_3$ in the reflection triangle $\{ r_j, r_k, r_3 \}$ is $\frac{2}{3} \pi$, then $(-\alpha_j, \alpha_k) = (-\alpha_i, \alpha_k) = \emptyset, \vert (-\alpha_i, \alpha_j) \vert \leq 3$ and $m_{\alpha_i, \alpha_j} = 8$.
\end{proposition}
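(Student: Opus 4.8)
The plan is to work throughout in the geometric realization $\mathbb{H}^2$ of $\Sigma(W,S)$ and to reduce everything to angle computations in the triangles sharing the cevian $r_3$. Write $P := r_i\cap r_j$, $Q:=r_j\cap r_k$, $R:=r_i\cap r_k$ for the three vertices of $T$, which exist because $\{r_i,r_j,r_k\}$ is a reflection triangle. First I would record that $r_3$ passes through $P$: since $\alpha_3\in(-\alpha_i,\alpha_j)$, the wall of $\alpha_3$ separates the opposite sectors $(-\alpha_i)\cap\alpha_j$ and $\alpha_i\cap(-\alpha_j)$ at $P$, and a geodesic separating two opposite sectors must pass through their common apex $P$. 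Consequently $r_i,r_j,r_3$ all lie in the dihedral group fixing $P$, so $m_{\alpha_i,\alpha_j}=m_{\alpha_j,\alpha_3}=m_{\alpha_i,\alpha_3}$, and $\angle r_j r_3$ is an integral multiple of $\frac{1}{m_{\alpha_i,\alpha_j}}$.

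Next I would exploit the reflection triangle $\{r_j,r_k,r_3\}$, whose vertices are $P=r_j\cap r_3$, $Q$ and $P':=r_k\cap r_3$. Its interior angle at $P'$ is $\frac{2}{3}\pi$ by hypothesis, so the two remaining interior angles, at $P$ and at $Q$, sum to strictly less than $\frac{1}{3}\pi$. In type $(2,3,8)$ the only admissible angle strictly below $\frac{1}{3}\pi$ is $\frac{1}{8}\pi$ (note $\frac{1}{6}\pi$ cannot occur, as $6\notin\{2,3,8\}$), so both angles equal $\frac{1}{8}\pi$. The value $\frac{1}{8}\pi$ at $P$ for $\angle r_j r_3$ forces $m_{\alpha_i,\alpha_j}=8$ and shows $r_3$ is adjacent to $r_j$; likewise $\frac{1}{8}\pi$ at $Q$ gives $m_{\alpha_j,\alpha_k}=8$ with $r_j,r_k$ adjacent. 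Adjacency of $r_j,r_k$ means the sector $(-\alpha_j)\cap\alpha_k$ is crossed by every other wall through $Q$, so no root lies strictly between $-\alpha_j$ and $\alpha_k$; this yields $(-\alpha_j,\alpha_k)=\emptyset$ and already settles $m_{\alpha_i,\alpha_j}=8$.

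It then remains to bound $(-\alpha_i,\alpha_j)$ and to prove $(-\alpha_i,\alpha_k)=\emptyset$. Writing $\frac{a\pi}{8}$ for the interior angle of $T$ at $P$, the walls of the residue at $P$ lying strictly between $-\alpha_i$ and $\alpha_j$ are exactly those at $1,\dots,a-1$ steps from $r_j$, so $|(-\alpha_i,\alpha_j)|=a-1$. As $r_3$ sits one step from $r_j$ inside the sector $\alpha_i\cap\alpha_j$, it meets $r_k$ at a point $P'$ of the segment $QR$, splitting $T$ into $\{r_j,r_k,r_3\}$ and $\{r_i,r_k,r_3\}$. From the first triangle (angles $\frac{\pi}{8},\frac{\pi}{8},\frac{2}{3}\pi$) the length of the shared side $PP'$ is fixed by the hyperbolic law of cosines; feeding it into the second triangle, whose angles at $P$ and $P'$ are $\frac{(a-1)\pi}{8}$ and $\frac{\pi}{3}$, expresses the cosine of the angle at $R$ as an explicit function of $a$. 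Since $r_i$ and $r_k$ genuinely intersect, this cosine must not exceed $1$; a short check shows it does exceed $1$ once $a\ge 5$, whence $a\le 4$ and $|(-\alpha_i,\alpha_j)|\le 3$. The same computation returns the angle at $R$ as $\frac{\pi}{2},\frac{\pi}{3},\frac{\pi}{8}$ for $a=2,3,4$; in the latter two cases this is already the minimal angle of the residue at $R$, forcing $r_i,r_k$ adjacent and $(-\alpha_i,\alpha_k)=\emptyset$ as before.

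The hard part is the case $a=2$, where the angle at $R$ equals $\frac{\pi}{2}$: the combinatorial angle-sum inequalities alone only give $a\le 5$, and they cannot exclude that $r_i,r_k$ are four steps apart in an $m=8$ residue at $R$, which would make $(-\alpha_i,\alpha_k)$ non-empty. Ruling this out is exactly where the hyperbolic metric is indispensable — the law-of-cosines computation above, combined with the fact that $P,Q,R,P'$ are genuine vertices of the $(2,3,8)$ complex. This is precisely the input furnished by the classification in \cite{Fe98}, on which the neighbouring results (Corollary \ref{uniquechamberhyperbolicrank3} and Lemma \ref{angle1over2}) already rely, so I would invoke it here to complete the argument.
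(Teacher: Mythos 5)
Your proposal is correct in substance but follows a genuinely different route from the paper. The paper's proof is minimal: it quotes the classification of \cite{Fe98} wholesale for the entire first part, i.e.\ for $(-\alpha_j, \alpha_k) = (-\alpha_i, \alpha_k) = \emptyset$ and $\vert (-\alpha_i, \alpha_j) \vert \leq 3$, and only proves $m_{\alpha_i, \alpha_j} = 8$ directly, via the same angle-sum observation you make (the two remaining angles of $\{ r_j, r_k, r_3 \}$ are forced to equal $\frac{\pi}{8}$). Your law-of-cosines computation genuinely replaces the citation for most of the statement: writing $\angle r_i r_j = \frac{a\pi}{8}$ and $\theta_R$ for the angle of the triangle cut off by $r_3$ at $R = r_i \cap r_k$, one gets $\cosh(PP') = \frac{\cot(\pi/8)}{\sqrt{3}}$ from the triangle $PQP'$ and then $\cos \theta_R = \frac{\sin((a-2)\pi/8)}{2\sin(\pi/8)}$, which equals $0, \frac{1}{2}, \cos\frac{\pi}{8}$ for $a = 2, 3, 4$ and exceeds $1$ for $a \geq 5$; I checked these values and they confirm your claims, so $\vert (-\alpha_i, \alpha_j) \vert \leq 3$ follows, and for $a \in \{3, 4\}$ the angle $\theta_R$ is the minimal admissible angle of its residue, giving $(-\alpha_i, \alpha_k) = \emptyset$. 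Your approach buys a largely self-contained argument (plus explicit angle data at $R$) where the paper has a black-box citation; the paper's approach buys brevity. One small slip: the admissible angles strictly below $\frac{\pi}{3}$ in type $(2, 3, 8)$ are $\frac{\pi}{8}$ \emph{and} $\frac{\pi}{4} = \frac{2\pi}{8}$, not only $\frac{\pi}{8}$; your conclusion stands because the two angles sum to less than $\frac{\pi}{3}$ while each is at least $\frac{\pi}{8}$, so neither can be $\frac{\pi}{4}$.

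The one soft spot is the case $a = 2$, which you flag yourself and then hand back to \cite{Fe98}. Formally this is no weaker than the paper, which cites \cite{Fe98} for far more, but your appeal is vague about what exactly the classification is supposed to exclude. You can instead finish elementarily: the triangle $PP'R$ bounded by $r_i, r_3, r_k$ has angles $\frac{\pi}{8}, \frac{\pi}{3}, \frac{\pi}{2}$, hence hyperbolic area $\pi\bigl(1 - \frac{1}{8} - \frac{1}{3} - \frac{1}{2}\bigr) = \frac{\pi}{24}$, which is exactly the area of one chamber of the $(2, 3, 8)$ tessellation. Being an intersection of three root half-planes, $PP'R$ is a union of closed chambers, so it is a single chamber; its corner at $R$ has angle $\frac{\pi}{2}$, so $R$ is an order-$2$ vertex, $r_i$ and $r_k$ are the two walls of a rank-$2$ residue with $m_{st} = 2$, and $(-\alpha_i, \alpha_k) = \emptyset$. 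This rules out precisely the configuration you could not exclude ($r_i, r_k$ four steps apart at an order-$8$ vertex), and with this insertion your argument is complete and entirely independent of the classification.
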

\begin{proof}
	The first part is a consequence of the classification in \cite{Fe98} (cf. Figure $8$ in $\S 5.1$ in loc.cit). Let $\alpha_3 \in (-\alpha_i, \alpha_j)$ such that the angle between $r_k$ and $r_3$ in the reflection triangle $\{ r_j, r_k, r_3 \}$ is $\frac{2}{3} \pi$. Then we have $\angle r_3 r_j = \angle r_j r_k = \frac{1}{8}$ because of the interior angle sum of a triangle. This implies $m_{\alpha_i, \alpha_j} = 8$. 
\end{proof}

\begin{lemma}\label{Prop238auxres}
	Assume that the Coxeter diagram is of type $(2, 3, 8)$. Let $T = \{ \alpha_0, \alpha_1, \alpha_2 \}$ be a triangle and let $\{ i, j, k \} = \{ 0, 1, 2 \}$. Assume that $(-\alpha_i, \alpha_k) = (-\alpha_j, \alpha_k) = \emptyset$ and that $m_{\alpha, \beta} = 8$ for any $\alpha \neq \beta \in T$. Then one of the following hold:
	\begin{enumerate}[label=(\alph*)]
		\item $\vert (-\alpha_i, \alpha_j) \vert = 1$ and there exists a root $\beta$ such that $o(r_i r_{\beta}) < \infty$ and $m_{\alpha_i, \beta} = 3$.
		
		\item $\angle r_i r_j = \frac{1}{2}$.
	\end{enumerate}
\end{lemma}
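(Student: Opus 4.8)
The plan is to translate the combinatorial hypotheses into hyperbolic geometry, using that $(2,3,8)$ is hyperbolic, so that $\Sigma(W,S)$ is realized on $\mathbb{H}^2$ as in Section~3. I would realize $T$ as a hyperbolic triangle whose corners are the vertices $\sigma_0,\sigma_1,\sigma_2$ of $T$, with $\sigma_\ell$ lying on the walls $\partial\alpha_{\ell-1},\partial\alpha_{\ell+1}$. Since $m_{\alpha,\beta}=8$ for all pairs, each $\sigma_\ell$ lies in the $W$-orbit of the $\tfrac\pi8$-corner of the fundamental triangle (so any two walls through it have $m_{\cdot,\cdot}=8$); call such a point a \emph{type-$8$ vertex}. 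Consequently all three interior angles of $T$ are integer multiples of $\tfrac\pi8$. The key preliminary is the dictionary between the root intervals and these angles: for a pair with $m_{\alpha,\beta}=8$ the walls in $[\alpha,\beta]$ are exactly those through the common vertex lying outside the open sector $\alpha\cap\beta$, whence $|(-\alpha_a,\alpha_b)|=\tfrac{8}{\pi}\gamma_c-1$, where $\gamma_c$ is the interior angle of $T$ at $\sigma_c=\partial\alpha_a\cap\partial\alpha_b$. Thus $(-\alpha_i,\alpha_k)=(-\alpha_j,\alpha_k)=\emptyset$ says precisely that the interior angles at $\sigma_j$ and $\sigma_i$ both equal $\tfrac\pi8$, while the quantity to be controlled is $\gamma_k$, with $|(-\alpha_i,\alpha_j)|=\tfrac8\pi\gamma_k-1$ and $\angle r_ir_j=\tfrac12$ exactly when $\gamma_k=\tfrac\pi2$.

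Next I would pin down $\gamma_k$. The hyperbolic angle-sum inequality gives $\gamma_k<\pi-2\cdot\tfrac\pi8=\tfrac{3\pi}4$, so a priori $\gamma_k\in\{\tfrac\pi8,\dots,\tfrac{5\pi}8\}$. To narrow this I would use a parity argument on conjugacy classes: the group $W$ of type $(2,3,8)$ has exactly two conjugacy classes of reflections (the diagram has a single edge of odd label, joining the $m=3$ pair, so one class contains those two generators and the third generator is alone), and around any type-$8$ vertex the eight walls alternate between the two classes, because the two global classes restrict to the two conjugacy classes of the local $I_2(8)$, which for even $m=8$ alternate. Hence two walls meeting at a type-$8$ vertex lie in the same class iff the number of $\tfrac\pi8$-steps between them is even. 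Since $\{\alpha_j,\alpha_k\}$ and $\{\alpha_i,\alpha_k\}$ each subtend one step (at $\sigma_i$ and $\sigma_j$), the pairs $r_j,r_k$ and $r_i,r_k$ lie in opposite classes, so $r_i,r_j$ lie in the same class; therefore $\gamma_k$ is an \emph{even} multiple of $\tfrac\pi8$, i.e. $\gamma_k\in\{\tfrac\pi4,\tfrac\pi2\}$ and $|(-\alpha_i,\alpha_j)|\in\{1,3\}$.

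If $\gamma_k=\tfrac\pi2$ then $\angle r_ir_j=\tfrac12$ and conclusion (b) holds. There remains the case $\gamma_k=\tfrac\pi4$, where $|(-\alpha_i,\alpha_j)|=1$; here I must still produce $\beta$ with $o(r_ir_\beta)<\infty$ and $m_{\alpha_i,\beta}=3$, which amounts to showing that $\partial\alpha_i$ passes through a type-$3$ vertex, equivalently that $r_i$ lies in the conjugacy class of the $m=3$ generators (a type-$3$ vertex carries an $I_2(3)$-residue whose three reflections are mutually conjugate, hence all in that one class, so its walls are exactly those of this class). To attack this I would take the unique $\alpha_3\in(-\alpha_i,\alpha_j)$, whose wall runs through $\sigma_k$ one $\tfrac\pi8$-step from each of $\partial\alpha_i$ and $\partial\alpha_j$, and examine the reflection triangle $\{r_j,r_k,r_3\}$, which carries two $\tfrac\pi8$ angles: Lemma \ref{angle1over2}(a) forbids a $\tfrac58$-angle at its third vertex $\partial\alpha_3\cap\partial\alpha_k$, and when that angle is $\tfrac23\pi$ Proposition \ref{Prop238} applies, so the admissible configurations are severely constrained.

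The crux — and the step I expect to be hardest — is to deduce from this that $r_i$ is forced into the type-$3$ class precisely when $\gamma_k=\tfrac\pi4$. The parity argument fixes the \emph{magnitude} of $\gamma_k$ but is blind to which of the two classes $r_i,r_j$ occupy, and a naive recursion on the sub-triangle cut off by $\partial\alpha_3$ only flips the roles of the two classes; so one genuinely has to exclude the alternative in which $r_i,r_j$ lie in the class avoiding type-$3$ vertices. Exactly as for Corollary \ref{uniquechamberhyperbolicrank3}, Lemma \ref{angle1over2} and Proposition \ref{Prop238}, I would settle this by appealing to the explicit classification of triangles in the $(2,3,8)$ hyperbolic plane in \cite{Fe98} (Figure $8$, §$5.1$), reading off the admissible angle data together with the vertex types along each side; in the case $\gamma_k=\tfrac\pi4$ this exhibits a type-$3$ vertex on $\partial\alpha_i$, and the reflection in any second wall through that vertex is the required $\beta$, giving conclusion (a).
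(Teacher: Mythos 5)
Your proposal is correct in substance but follows a genuinely different route from the paper's, so let me compare. The paper never mentions conjugacy classes of reflections: it fixes $\alpha_3\in(-\alpha_i,\alpha_j)$ with $\angle r_i r_3=\frac{1}{8}$ (possible because a fundamental triangle with all labels $8$ would force type $(8,8,8)$), uses Lemma \ref{angle1over2} and the impossibility of type $(3,8,8)$ to force $\angle r_3 r_k\in\{\frac{1}{2},\frac{2}{3}\}$, and then finishes by explicit wall constructions: if $\angle r_3 r_k=\frac{2}{3}$ it builds $\alpha_4,\alpha_5$ and concludes $\angle r_i r_j=\frac{1}{2}$ (your case $\gamma_k=\frac{\pi}{2}$), and if $\angle r_3 r_k=\frac{1}{2}$ it builds $\alpha_4,\alpha_5,\alpha_6\in(\alpha_3,\alpha_k)$ to get $\angle r_i r_j=\frac{2}{8}$ and then $\alpha_7,\alpha_8,\alpha_9\in(-\alpha_3,\alpha_k)$ with $\angle r_7 r_i=\frac{2}{3}$, so that $\beta=\alpha_7$ witnesses conclusion (a). Your parity argument (two reflection classes in type $(2,3,8)$, alternating around any residue of type $I_2(8)$) is sound and reaches the dichotomy $\gamma_k\in\{\frac{\pi}{4},\frac{\pi}{2}\}$ in one stroke, replacing the paper's case-by-case exclusions; that is a real gain in clarity. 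What it cannot reach --- as you note yourself --- is conclusion (a): parity shows $r_i\sim r_j$ but is blind to which class they occupy, and membership of $r_i$ in the class whose walls meet type-$3$ vertices is precisely the assertion that $\beta$ exists. At that point your proof becomes an unverified read-off from Figure $8$ of \cite{Fe98}. The claim you want to read off is true (the paper's construction of $\alpha_7$ proves it), and deferring to \cite{Fe98} matches how the paper itself proves Lemma \ref{angle1over2} and Proposition \ref{Prop238}; but it is the entire content of (a), and it is the one step of this lemma that the author proves by hand rather than by citation, which suggests it is not a one-glance consequence of that figure. So the reduction is a nice, genuinely different argument, but a self-contained proof still needs either the actual inspection of the Coxeter decomposition of the $(\frac{\pi}{8},\frac{\pi}{8},\frac{\pi}{4})$-triangle in \cite{Fe98}, including its interior vertex types, or the paper's construction of $\alpha_7,\alpha_8,\alpha_9$ as a substitute for it.
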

\begin{proof}
	In the following proof, up to reordering, we can assume that we are in the situation displayed in the figure. By hypothesis $m_{\alpha_i, \alpha_j} = 8$ and we obtain $\angle r_i r_j \in \{ \frac{1}{8}, \ldots, \frac{5}{8} \}$. Since the diagram is not of type $(8, 8, 8)$, it follows that $\angle r_i r_j \neq \frac{1}{8}$. Thus there exists a root $\alpha_3 \in (-\alpha_i, \alpha_j)$ such that $\angle r_i r_3 = \frac{1}{8}$. Using Lemma \ref{angle1over2} we obtain $\angle r_3 r_k \in \{ \frac{1}{3}, \frac{1}{2}, \frac{2}{3} \}$. Since the diagram is not of type $(3, 8, 8)$, we obtain $\angle r_3 r_k \neq \frac{1}{3}$. We distinguish the following two cases:
	\begin{enumerate}[label=(\alph*)]
		\item $\angle r_3 r_k = \frac{2}{3}$: Then $\angle r_k r_3 = \frac{1}{3}$. Since $(W, S)$ is not of type $(3, 8, 8)$, we obtain a root $\alpha_4 \in (\alpha_3, \alpha_j)$ such that $\angle r_3 r_4 = \frac{1}{8}$. Applying Lemma \ref{angle1over2} we have $\angle r_4 r_k \in \{ \frac{1}{3}, \frac{1}{2}, \frac{2}{3} \}$. Using the interior angle sum of a triangle and the fact that the diagram is not of type $(3, 3, 8)$, we obtain $\angle r_4 r_k = \frac{1}{2}$. Assume that $(-\alpha_k, \alpha_4) \neq \emptyset$. Then there exists a root $\gamma \in (-\alpha_k, \alpha_4)$ such that $\angle r_4 r_{\gamma} = \frac{1}{4} = \angle r_{\gamma} r_k$. But then $\angle r_3 r_{\gamma} < \frac{1}{2}$ and hence $\angle r_3 r_{\gamma} = \frac{1}{3}$. This implies $\angle r_3 r_4 + \angle r_4 r_{\gamma} + \angle r_{\gamma} r_3 >1$ which is a contradiction. Thus $(-\alpha_k, \alpha_4) = \emptyset$. This implies $(\alpha_4, \alpha_k) = \emptyset$. Since the diagram is not of type $(2, 8, 8)$, we obtain a root $\alpha_5 \in (\alpha_4, \alpha_j)$ such that $\angle r_4 r_5 = \frac{1}{8}$. Moreover, we obtain $\angle r_5 r_k = \frac{1}{3}$. Thus $\angle r_k r_5 = \frac{2}{3}$ and hence $\angle r_i r_j = \frac{1}{2}$.
		\[ \begin{tikzpicture}[scale=0.5]
		
		\draw (0  ,  0  ) to (10 ,  0  );
		\draw (10 ,  0  ) to (10 , -0.3);
		\draw (9.9,  0  ) to (9.9, -0.3);
		\draw (9.8,  0  ) to (9.8, -0.3);
		\draw (9.9, -0.3) node [right]{$-\alpha_i$};
		
		\draw (0, -1) to (6, 5);
		\draw (0  , -1  ) to (0.2, -1.2);
		\draw (0.1, -0.9) to (0.3, -1.1);
		\draw (0.2, -0.8) to (0.4, -1  );
		\draw (0.3, -1.15) node [right]{$\alpha_j$};
		
		\draw (5, 5) to (8, -1);
		\draw (8  , -1  ) to (7.8, -1.1);
		\draw (7.95, -0.9) to (7.75, -1);
		\draw (7.9, -0.8) to (7.7, -0.9);
		\draw (7.75, -1.1) node [below]{$\alpha_k$};
		
		\draw (0  , -0.25 ) to (10, 2.25);
		\draw (10  ,  2.25   ) to (10.1, 2);
		\draw (9.9,  2.225) to (10  , 1.975);
		\draw (9.8,  2.2 ) to (9.9, 1.95);
		\draw (10.5, 2.1) node [below]{$\alpha_3$};
		
		\draw (0, -0.5) to (10, 4.5);
		\draw (10, 4.5) to (10.1, 4.3);
		\draw (9.9, 4.45) to (10, 4.25);
		\draw (9.8, 4.4) to (9.9, 4.2);
		\draw (10.2, 4.3) node [below]{$\alpha_4$};
		
		\draw (0, -0.75) to (10, 6.75);
		\draw (10, 6.75) to (10.1, 6.5);
		\draw (9.9, 6.675) to (10, 6.425);
		\draw (9.8, 6.6) to (9.9, 6.35);
		\draw (10.5, 6.6) node [below]{$\alpha_5$};

	\end{tikzpicture} \]
		
		\item $\angle r_3 r_k = \frac{1}{2}$: Since the diagram is not of type $(2, 8, 8)$, we obtain three roots $\alpha_4, \alpha_5, \alpha_6 \in (\alpha_3, \alpha_k)$ such that $\angle r_k r_4 = \angle r_4 r_5 = \angle r_5 r_6 = \angle r_6 r_3 = \frac{1}{8}$. Using similar arguments as above we obtain $\angle r_j r_6 = \frac{2}{3}$ and hence $\angle r_3 r_j = \frac{1}{8}$. Thus we have $\angle r_i r_j = \frac{2}{8}$. Furthermore, we obtain three roots $\alpha_7, \alpha_8, \alpha_9 \in (-\alpha_3, \alpha_k)$ with $\angle r_3 r_7 = \angle r_7 r_8 = \angle r_8 r_9 = \angle r_9 r_k = \frac{1}{8}$. This implies $\angle r_7 r_i = \frac{2}{3}$ as above. For $\beta = \alpha_7$ the claim follows.\qedhere
	\end{enumerate}
\end{proof}

\section{Root group data}\label{sec:rgd}

An \textit{RGD-system of type $(W, S)$} is a pair $\mathcal{D} = \left( G, \left( U_{\alpha} \right)_{\alpha \in \Phi}\right)$ consisting of a group $G$ together with a family of subgroups $U_{\alpha}$ (called \textit{root groups}) indexed by the set of roots $\Phi$, which satisfies the following axioms, where $H := \bigcap_{\alpha \in \Phi} N_G(U_{\alpha}), U_{\pm} := \langle U_{\alpha} \mid \alpha \in \Phi_{\pm} \rangle$:
\begin{enumerate}[label=(RGD\arabic*), leftmargin=*] \setcounter{enumi}{-1}
	\item For each $\alpha \in \Phi$, we have $U_{\alpha} \neq \{1\}$.
	
	\item For each prenilpotent pair $\{ \alpha, \beta \} \subseteq \Phi$, the commutator group $[U_{\alpha}, U_{\beta}]$ is contained in the group $U_{(\alpha, \beta)} := \langle U_{\gamma} \mid \gamma \in (\alpha, \beta) \rangle$.
	
	\item For every $s\in S$ and each $u\in U_{\alpha_s} \backslash \{1\}$, there exists $u', u'' \in U_{-\alpha_s}$ such that the product $m(u) := u' u u''$ conjugates $U_{\beta}$ onto $U_{s\beta}$ for each $\beta \in \Phi$.
	
	\item For each $s\in S$, the group $U_{-\alpha_s}$ is not contained in $U_+$.
	
	\item $G = H \langle U_{\alpha} \mid \alpha \in \Phi \rangle$.
\end{enumerate}

\begin{lemma}\label{costarsimplerggenerate}
	Let $(G, (U_{\alpha})_{\alpha \in \Phi})$ be an RGD-system of type $(W, S)$ satisfying Condition $\costar$. Then we have $\langle U_{\gamma} \mid \gamma \in [\alpha, \beta] \rangle = \langle U_{\alpha} \cup U_{\beta} \rangle$ or equivalently $U_{(\alpha, \beta)} = [U_{\alpha}, U_{\beta}]$ for any pair of roots $\{ \alpha, \beta \} \subseteq \Phi$ which is a basis of a finite root subsystem.
\end{lemma}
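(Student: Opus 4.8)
The plan is to reduce the statement to a computation inside a single rank-$2$ (spherical) residue and then to read off the answer from the classification of Moufang polygons. First I would observe that, since $\{\alpha,\beta\}$ is a basis of a finite root subsystem, the reflections $r_\alpha, r_\beta$ generate a finite dihedral group of order $2n$ with $n := o(r_\alpha r_\beta) < \infty$, and that $[\alpha,\beta]$ consists precisely of the $n$ roots $\gamma_1 = \alpha, \gamma_2, \dots, \gamma_n = \beta$ of the associated rank-$2$ subsystem, ordered so that consecutive reflections rotate through the smallest angle. Writing $U_+ := \langle U_\gamma \mid \gamma \in [\alpha,\beta]\rangle$, the standard structure theory of RGD-systems (cf.\ \cite{AB08}) gives the unique factorisation $U_+ = U_{\gamma_1} U_{\gamma_2}\cdots U_{\gamma_n}$ together with the commutator relations $[U_{\gamma_i}, U_{\gamma_j}] \subseteq \langle U_{\gamma_k} \mid i < k < j\rangle$ coming from (RGD1). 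In particular $U_{(\alpha,\beta)} = U_{\gamma_2}\cdots U_{\gamma_{n-1}}$, and these relations show that $U_{(\alpha,\beta)}$ is normalised by both $U_{\gamma_1} = U_\alpha$ and $U_{\gamma_n} = U_\beta$; hence $U_{(\alpha,\beta)} \trianglelefteq U_+$ and, via the factorisation, $U_+ / U_{(\alpha,\beta)} \cong U_\alpha \times U_\beta$.

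Next I would clarify the equivalence of the two formulations. Since $[\alpha,\beta] = (\alpha,\beta) \cup \{\alpha,\beta\}$, the generation statement $\langle U_\gamma \mid \gamma \in [\alpha,\beta]\rangle = \langle U_\alpha \cup U_\beta\rangle$ is equivalent to $U_{(\alpha,\beta)} \subseteq \langle U_\alpha \cup U_\beta\rangle$; and because (RGD1) always gives $[U_\alpha, U_\beta] \subseteq U_{(\alpha,\beta)}$, it suffices to prove the single equality $U_{(\alpha,\beta)} = [U_\alpha, U_\beta]$, which then yields the generation statement as well. I would also note here that the Moufang polygon attached to the pair $\{\alpha,\beta\}$ depends only on the type $\{s,t\}$ of the corresponding rank-$2$ residue, so it is isomorphic to the one attached to the simple pair of that type; thus Condition $\costar$, although phrased for $\{\alpha,\beta\}\subseteq\Pi$, rules out $X_{\alpha,\beta}/Z(X_{\alpha,\beta}) \cong B_2(2), G_2(2), G_2(3), {}^2F_4(2)$ for the pair at hand as well.

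The main step, and the main obstacle, is then to verify $U_{(\alpha,\beta)} = [U_\alpha, U_\beta]$ for every Moufang $n$-gon with $n \in \{2,3,4,6,8\}$ not excluded by $\costar$. For $n = 2$ one has $(\alpha,\beta) = \emptyset$ and there is nothing to prove; for $n = 3$ the relation $[U_\alpha, U_\beta] = U_{\gamma_2}$ holds unconditionally, the commutator map being a surjective multiplication in the underlying alternative division ring (this is why the simply-laced case of Theorem C needs no extra hypothesis). For $n \in \{4,6,8\}$ I would invoke the explicit parametrisations and commutator formulas of Tits and Weiss \cite{TW02}: in each family the components of $[U_\alpha, U_\beta]$ in the interior root groups $U_{\gamma_2}, \dots, U_{\gamma_{n-1}}$ are described by (bi- or multi-)additive and field-theoretic maps (products, squares, norms, Frobenius twists), and one must check that the resulting commutators generate the full interior group.

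I expect this case-by-case computation, together with the precise identification of the failure of surjectivity with the four groups forbidden by $\costar$, to be the technical heart of the argument. The example of $B_2(2)$ already exhibits the mechanism: over $\FF_2$ the single nontrivial commutator $[x_\beta(1),x_\alpha(1)]$ generates only a subgroup of index $2$ in $U_{(\alpha,\beta)} \cong \FF_2^2$, whereas already over $\FF_4$ the three distinct ``twisted lines'' $\{(ts,ts^2)\}$ span all of $U_{(\alpha,\beta)}$; the remaining degeneracies occur exactly for $G_2(2), G_2(3)$ and ${}^2F_4(2)$, which is why excluding precisely these four systems via $\costar$ suffices to force $U_{(\alpha,\beta)} = [U_\alpha, U_\beta]$ in all cases.
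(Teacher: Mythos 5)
Your proposal correctly sets up the reduction, but it stops exactly where the mathematical content begins, and that is a genuine gap rather than an alternative proof. Note first what the paper actually does: its proof of this lemma is a one-line citation of Lemma $18$ and Proposition $7$ on page $60$ of \cite{Ab96}, which is precisely the statement that $U_{(\alpha, \beta)} = [U_{\alpha}, U_{\beta}]$ holds for every Moufang polygon whose little projective group is not $B_2(2)$, $G_2(2)$, $G_2(3)$ or ${}^2F_4(2)$. Everything you do before invoking the classification --- identifying $[\alpha, \beta]$ with the positive roots of the dihedral subsystem, the unique factorisation $U_{[\alpha,\beta]} = U_{\gamma_1} \cdots U_{\gamma_n}$, the normality of $U_{(\alpha,\beta)}$, the equivalence of the two formulations via unique factorisation, and the remark that Condition $\costar$ constrains non-simple pairs because all residues of a given type are isomorphic --- is correct, but it is preparation, not proof: it reduces the lemma to the rank-$2$ assertion that the commutators $[u_{\alpha}, u_{\beta}]$ generate the full interior group, which is the assertion being cited.

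That assertion is exactly the step you leave as ``one must check'' and ``I expect this case-by-case computation \ldots to be the technical heart.'' It cannot be waved at. For $n=4$ the classification of Moufang quadrangles in \cite{TW02} comprises six families (involutory, quadratic form, indifferent, pseudo-quadratic, and the exceptional types $E_6$, $E_7$, $E_8$ and $F_4$), each with its own parametrisation and commutator formulas; for $n=6$ one needs the hexagonal systems, and for $n=8$ the Ree--Tits octagons, where even writing down the relevant relations is lengthy --- compare the appendix of this paper, where a single octagon commutator identity takes several pages to verify. One must, in every family, determine the subgroup generated by all values of the commutator map and show that it is proper only in the four excluded cases; your $B_2(2)$ versus $\FF_4$ computation illustrates the mechanism for one family over small fields but says nothing about, e.g., indifferent quadrangles over imperfect fields of characteristic $2$, the exceptional quadrangles, or ${}^2F_4$. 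So as a self-contained argument the proposal is incomplete: the framing is sound, but the content it defers is precisely the content of the result the paper cites.
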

\begin{proof}
	This follows from Lemma $18$ and Proposition $7$ on Page $60$ of \cite{Ab96}.
\end{proof}

\begin{lemma}\label{hexagonproperties}
	Let $\mathcal{D} = (G, (U_{\alpha})_{\alpha \in \Phi})$ be an RGD-system of type $(W, S)$.
	\begin{enumerate}[label=(\alph*)]
		\item If $\mathcal{D}$ is simply-laced or of type $G_2$, then every root group is abelian.
		
		\item If $\mathcal{D}$ is of type $G_2$ and if $\alpha, \beta \in \Phi$ such that $\vert (\alpha, \beta) \vert = 2$, then $[U_{\alpha}, U_{\beta}] = 1$.
	\end{enumerate}
\end{lemma}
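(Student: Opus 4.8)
The plan is to reduce both parts to structural facts about the root groups of rank-$2$ Moufang polygons supplied by the classification in \cite{TW02}; no genuinely new computation should be needed.

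For part (a) I would first reduce to simple root groups. Every root is of the form $\gamma = w\alpha_s$ for some $w \in W$ and $s \in S$, and by (RGD2) a suitable product of the elements $m(u)$ realizes $w$ and conjugates $U_{\alpha_s}$ onto $U_{\gamma}$; since conjugation is a group isomorphism, it suffices to show that each simple root group $U_{\alpha_s}$ is abelian. If $\mathcal{D}$ is of type $G_2$ this is the whole (rank $2$) system, so $U_{\alpha_s}$ is a root group of a Moufang hexagon. If $\mathcal{D}$ is simply-laced, I would pick $t$ with $m_{st} = 3$; then $U_{\alpha_s}$ occurs as a root group of the rank-$2$ residue of type $\{s,t\}$, which is a Moufang triangle. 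In either case \cite{TW02} tells us that the root groups of Moufang triangles and of Moufang hexagons are abelian, which gives the claim.

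For part (b), $(W,S)$ is of type $G_2$, so $\mathcal{D}$ is the RGD-system of a Moufang hexagon, with a chain $U_1, \ldots, U_6$ of root groups associated to a positive system. The condition $\vert (\alpha,\beta)\vert = 2$ picks out exactly the pairs lying three steps apart in this chain, and the three such pairs $\{U_i, U_{i+3}\}$ are permuted among one another by the $W$-action, so up to conjugacy it is enough to treat $\{U_1, U_4\}$. By (RGD1) we already know $[U_{\alpha}, U_{\beta}] \subseteq \langle U_{\gamma_1} \cup U_{\gamma_2}\rangle$ where $(\alpha,\beta) = \{\gamma_1,\gamma_2\}$; I would then invoke the explicit commutator relations of the hexagon from \cite{TW02}, which show that the distance-$3$ commutators $[U_1,U_4], [U_2,U_5], [U_3,U_6]$ all vanish.

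The main obstacle is the bookkeeping rather than any hard idea: in part (b) one must correctly translate the interval condition $\vert (\alpha,\beta)\vert = 2$ into the statement that $\alpha$ and $\beta$ are three apart in the hexagon, verify that the $W$-action identifies all such pairs, and quote the hexagon commutator relations with consistent conventions. In part (a) the only delicate point is ensuring that $\gamma$ really appears as a root of a triangle or hexagon and not merely of a reducible rank-$2$ residue of type $A_1 \times A_1$; this is exactly why one reduces to a simple root and chooses a neighbour $t$ with $m_{st} \geq 3$.
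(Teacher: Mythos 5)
Your proposal is correct and takes essentially the same route as the paper: the paper's proof is a one-line citation of the classification in \cite{TW02} (Theorems $(17.1)$, $(17.5)$ and Examples $(16.1)$, $(16.8)$), i.e.\ root groups of Moufang triangles and hexagons are parametrized by additive groups of alternative division rings resp.\ hexagonal systems/fields, and the hexagon relations of $(16.8)$ make all distance-three commutators $[U_i, U_{i+3}]$ trivial. Your added bookkeeping (conjugating an arbitrary root to a simple/positive one via the $m(u)$'s, and checking that $\vert(\alpha,\beta)\vert = 2$ corresponds to distance three in the chain) is exactly what the paper leaves implicit, including the implicit assumption that every simple root lies in a residue with $m_{st}=3$ in the simply-laced case.
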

\begin{proof}
	This follows from the classification in \cite{TW02}. In particular, we used Theorem $(17.1)$, Theorem $(17.5)$, Example $(16.1)$ and Example $(16.8)$ in loc.cit.
\end{proof}

\subsection*{RGD-systems of type $I_2(8)$}

In this subsection we let $\mathcal{D} = (G, (U_{\alpha})_{\alpha \in \Phi})$ be an RGD-system of type $I_2(8)$.

\begin{lemma}\label{octagonproperties}
	Let $\alpha, \beta \in \Phi$. Then the following hold:
	\begin{enumerate}[label=(\alph*)]
		\item If $\vert (\alpha, \beta) \vert = 3$, then $[U_{\alpha}, U_{\beta}] = 1$.
		
		\item If $\mathcal{D}$ satisfies Condition $\costar$, then there is only one kind of root groups which is abelian. If, furthermore, $U_{\alpha}$ is abelian and $\vert (\alpha, \beta) \vert = 1$, then $[U_{\alpha}, U_{\beta}] = 1$.
	\end{enumerate}
\end{lemma}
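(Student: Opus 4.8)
The plan is to reduce both parts to the explicit description of the Moufang octagon and its commutator relations in \cite{TW02}, exactly in the spirit of the proof of Lemma \ref{hexagonproperties}. First I would fix a chamber $c \in \alpha \cap \beta$ (non-empty, since $\{\alpha,\beta\}$ is prenilpotent) and read off the root group sequence $(U_1, \ldots, U_8)$ attached to the apartment $\Sigma(W,S)$ based at $c$: the eight positive roots get numbered cyclically, so that after this normalization any prenilpotent pair becomes a pair $\{U_i, U_j\}$ with $1 \leq i < j \leq 8$ and $(\alpha,\beta) = \{U_{i+1}, \ldots, U_{j-1}\}$, whence $\vert (\alpha,\beta) \vert = j-i-1$. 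The root groups fall into two kinds according to the parity of the index, corresponding to the two conjugacy classes of reflections in $I_2(8)$: the groups $U_1, U_3, U_5, U_7$ are parametrized by the additive group of a field $K$ of characteristic $2$ and are abelian, whereas $U_2, U_4, U_6, U_8$ carry the twisted group law of the underlying octagonal system and are the candidates for being non-abelian. Since the rotation $U_i \mapsto U_{i+2}$ is an automorphism of the octagon, it suffices to treat one representative of each kind.

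For part (a), a pair with $\vert (\alpha,\beta) \vert = 3$ is, after this reduction, a pair at distance $4$, i.e. $\{U_1, U_5\}$ or $\{U_2, U_6\}$ up to the rotation above. I would then quote the commutator relations of the octagon: both distance-$4$ commutators $[U_1,U_5]$ and $[U_2,U_6]$ are trivial. Note that this uses neither Condition $\costar$ nor any abelianness hypothesis, which matches the statement.

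For part (b), the first assertion is the identification of the degenerate case. By the classification in \cite{TW02} the groups of the first kind are always abelian, while a group of the second kind is abelian if and only if the octagonal system degenerates, and this happens precisely when $X_{\alpha,\beta}/Z(X_{\alpha,\beta}) \cong {}^2F_4(2)$ (the field is $\FF_2$ and the defining endomorphism is the identity). As Condition $\costar$ excludes this isomorphism type, the second kind is non-abelian and hence exactly one of the two kinds is abelian. For the second assertion, the hypothesis that $U_\alpha$ is abelian forces, by the first assertion, that $\alpha$ is of the first (abelian) kind; a root $\beta$ with $\vert (\alpha,\beta)\vert = 1$ is then at distance $2$ and of the same kind, so the pair is $\{U_1, U_3\}$ up to rotation, and the octagon relations give $[U_1, U_3] = 1$. (The abelianness hypothesis is genuinely needed here: for the \emph{second} kind the distance-$2$ commutator $[U_2, U_4]$ is the non-trivial ``Heisenberg'' relation landing in $U_3$.)

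I expect the main obstacle to be, on the one hand, the bookkeeping of matching the paper's interval notation $(\alpha,\beta)$ and the prenilpotent-pair language to the linear order $U_1, \ldots, U_8$, and of checking that every pair with the prescribed value of $\vert (\alpha,\beta)\vert$ is, up to the Weyl group action and the order-two rotation, one of the explicit representatives whose commutator is recorded in \cite{TW02}. The genuinely delicate point is the first assertion of (b): one must verify that the \emph{only} degeneration making a second-kind root group abelian is the passage to ${}^2F_4(2)$, which is exactly where Condition $\costar$ is used essentially — in contrast to part (a) and to the second assertion of (b), where $\costar$ enters only through this identification.
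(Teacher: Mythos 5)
Your proposal is correct and follows essentially the same route as the paper: the paper's entire proof is the citation ``This follows from Example $(16.9)$ and $(10.15)$ of \cite{TW02}'', and your argument simply unpacks that citation — the parity classification of the eight root groups (odd ones parametrized by the field, even ones by the twisted group $K^{(2)}_{\sigma}$ of the octagonal set), the rotation reduction, the vanishing of the distance-$4$ and odd--odd distance-$2$ commutators, and the observation that the even kind is abelian only in the degenerate case ${}^2F_4(2)$, which is exactly what Condition $\costar$ excludes. In particular you correctly isolate where $\costar$ is needed (only for the first assertion of (b)) and where the abelianness hypothesis is needed (since even--even distance-$2$ commutators are non-trivial), which is consistent with how the paper later applies the lemma.
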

\begin{proof}
	This follows from Example $(16.9)$ and $(10.15)$ of \cite{TW02}.
\end{proof}

\begin{lemma}\label{octagoncommutatorrelations}
	Using the notations of \cite{TW02} we obtain the following commutator relation:
	\begin{align*}
		[x_1(t), x_8(u_1, u_2)] = &x_2( t^{\sigma +1}u_1 + t^{\sigma+1} u_2^{\sigma+1}, tu_2 ) \\
		\cdot &x_3( t^{\sigma+1} u_2^{\sigma+2} + t^{\sigma+1} u_1 u_2 + t^{\sigma+1} u_1^{\sigma} ) \\
		\cdot &x_4( t^{\sigma+2} u_1^{\sigma} u_2^{\sigma+1} + t^{\sigma +2} u_1^2 u_2 + t^{\sigma +2} u_2^{2\sigma +3}, t^{\sigma} u_1 ) \\
		\cdot &x_5( t^{\sigma+1} u_1^{\sigma} u_2^{\sigma} + t^{\sigma+1} u_2^{2\sigma+2} + t^{\sigma+1} u_1^2 ) \\
		\cdot &x_6( t^{\sigma+1} u_1^2 u_2 + t^{\sigma+1} u_1^{\sigma} u_2^{\sigma+1} + t^{\sigma+1} u_1^{\sigma+1}, tu_1 + tu_2^{\sigma+1} ) \\
		\cdot &x_7( tu_1^{\sigma} + tu_1 u_2 + tu_2^{\sigma+2} )
	\end{align*}
\end{lemma}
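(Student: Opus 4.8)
The plan is to obtain the six factors on the right-hand side directly from the explicit root group sequence of the Moufang octagon recorded in \cite{TW02}, and to assemble them by a single commutator computation. Recall the parametrization underlying the notation $x_i$: writing $U_1, \dots, U_8$ for the root groups attached to an apartment, the odd-indexed groups $U_1, U_3, U_5, U_7$ are each isomorphic to the additive group $(K,+)$ of a field $K$ of characteristic $2$ carrying a Tits endomorphism $\sigma$ with $x^{\sigma^2} = x^2$ for all $x \in K$, while the even-indexed groups $U_2, U_4, U_6, U_8$ are parametrized by pairs $(a,b) \in K \times K$ under the twisted, two-step nilpotent group law of the octagonal system. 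Since $\alpha_1$ and $\alpha_8$ are the outermost roots of the apartment, the general commutator relation of a root group sequence already forces $[U_1, U_8] \subseteq \langle U_2, \dots, U_7 \rangle$, so the commutator must be a product of the form $x_2 \cdots x_7$; the content of the lemma is the exact value of each of these factors.

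First I would fix the normalization of the maps $x_i$ and the structure constants of the octagonal system precisely as in \cite{TW02}, so that the elementary commutator relations between the individual generators given in Example $(16.9)$, together with the arithmetic identities for $\sigma$ collected in $(10.15)$ of loc.\ cit., are available as building blocks. I would then compute $[x_1(t), x_8(u_1, u_2)]$ by determining its projections onto $U_7, U_6, \dots, U_2$ successively, using the descending filtration $\langle U_2, \dots, U_7 \rangle \supseteq \langle U_3, \dots, U_7 \rangle \supseteq \cdots$. Concretely one splits $x_8(u_1, u_2)$ into the contributions of its two coordinates according to the group law of $U_8$, applies the standard commutator expansion formulas modulo the deeper terms of the filtration, and gathers the resulting monomials in $t, u_1, u_2$ into the appropriate root group; for instance the deepest factor $U_7$ receives $t u_1^{\sigma} + t u_1 u_2 + t u_2^{\sigma+2}$, while the first coordinate of $U_2$ receives $t^{\sigma+1} u_1 + t^{\sigma+1} u_2^{\sigma+1}$.

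I expect the main obstacle to be the bookkeeping of this collection rather than any conceptual point. Because the intermediate groups $U_2, \dots, U_7$ do not commute and the even-indexed ones carry a non-abelian group law, the order in which the factors are gathered matters, and each reordering throws off secondary commutators that contribute to the deeper factors; the computation must therefore be organized in one fixed collection order. One then has to verify that, after repeatedly invoking $x^{\sigma^2} = x^2$ and the characteristic-$2$ relations to merge or annihilate monomials such as $u_2^{2\sigma+3}$ or $t^{\sigma+1} u_1^{\sigma+1}$, the accumulated coefficients coincide with the six displayed polynomials. This is a finite, if lengthy, verification, and once the elementary relations and $\sigma$-arithmetic of \cite{TW02} are in place nothing beyond careful control of the $\sigma$-exponents remains.
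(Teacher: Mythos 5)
Your plan is essentially the paper's own proof: the computation in the appendix (Lemma \ref{proofofoctagoncommutatorrelations}) likewise splits $x_8(u_1, u_2)$ into the contributions of its two coordinates (writing it as $x_8(u_1)y_8(u_2)$), expands via the identity $[a, bc] = [a, c][a, b]^c$, and collects everything into the normal form $x_2 \cdots x_7$ in a fixed order, using the elementary relations of Example $(16.9)$ of \cite{TW02} together with a handful of auxiliary relations for the $y_i$-coordinates derived from them. The only difference is organizational --- the paper records explicit normal-form multiplication rules $g\, x_i(z)$ and $g\, y_i(z)$ instead of phrasing the collection as successive projections along the subnormal filtration $\langle U_2, \ldots, U_7 \rangle \supseteq \langle U_3, \ldots, U_7 \rangle \supseteq \cdots$ --- which is immaterial.
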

\begin{proof}
	This is a straight forward computation, using only the commutator relations in Example $(16.9)$ of \cite{TW02}. For the computation see Lemma \ref{proofofoctagoncommutatorrelations} in the appendix.
\end{proof}

\subsection*{Affine RGD-systems}

\begin{proposition}\label{Propsimplylacedaffine}
	Every RGD-system of simply-laced affine type or of type $(2, 3, 6)$ satisfies Condition $\nc$.
\end{proposition}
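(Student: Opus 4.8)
The plan is to fix a nested pair $\alpha \subsetneq \beta$ and to prove $[U_\alpha, U_\beta] = 1$ by induction on the cardinality of the interval $(\alpha, \beta)$. First I would record two structural facts. If $\alpha \subseteq \gamma$ then $o(r_\alpha r_\gamma) = \infty$, since a finite order would force $\partial\alpha$ and $\partial\gamma$ to cross and hence $\alpha \not\subseteq \gamma$; thus every $\gamma \in [\alpha,\beta] = \{\gamma : \alpha \subseteq \gamma \subseteq \beta\}$ has a wall not crossing $\partial\alpha$. In the Euclidean realization of an affine $\Sigma(W,S)$ two hyperplanes either meet or are parallel, so all these walls share one direction $\theta$ and the interval is a \emph{finite} chain $\alpha = \gamma_0 \subsetneq \gamma_1 \subsetneq \cdots \subsetneq \gamma_{n+1} = \beta$ of half-spaces of direction $\theta$. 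Second, in all the types under consideration every rank-$2$ residue is of type $A_1 \times A_1$, $A_2$ or $G_2$, so by Lemma \ref{hexagonproperties}(a) every root group is abelian.

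For the base case $(\alpha,\beta) = \emptyset$ axiom (RGD1) gives $[U_\alpha, U_\beta] \subseteq U_{(\alpha,\beta)} = 1$. For the inductive step I would first note that for any pair of indices $\{i,j\} \neq \{0, n+1\}$ the interval $(\gamma_i, \gamma_j)$ is strictly shorter than $(\alpha,\beta)$, so the induction hypothesis yields $[U_{\gamma_i}, U_{\gamma_j}] = 1$; in particular $M := \langle U_{\gamma_1}, \dots, U_{\gamma_n}\rangle$ is abelian and is centralized by both $U_\alpha = U_{\gamma_0}$ and $U_\beta = U_{\gamma_{n+1}}$. Since $[U_\alpha, U_\beta] \subseteq U_{(\alpha,\beta)} = M$, the commutator map $U_\alpha \times U_\beta \to M$ takes values in the centre of $N := \langle U_\alpha, M, U_\beta\rangle$, and because all groups involved are abelian a standard computation shows this map is biadditive. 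It therefore suffices to show that this central commutator pairing vanishes.

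The heart of the argument is to kill this pairing. Here I would exploit the $m$-maps of axiom (RGD2): the $m$-map of a wall $\gamma_i$ conjugates the pair $(U_\alpha, U_\beta)$ to the root groups of the reflected half-spaces, which lie on the far side of $\partial\gamma_i$; comparing the pairing with this conjugate and feeding in the explicit rank-$2$ commutator relations of \cite{TW02} for the $A_2$- and $G_2$-residues that meet the strip forces the image in each $U_{\gamma_i}$ to be trivial. In the simply-laced case the only nontrivial local relations are the $A_2$-relations $[U_\mu, U_\nu] = U_{\mu+\nu}$, which never produce a second root of direction $\theta$; in type $(2,3,6)$ the relevant local configurations are hexagons with a two-element interval, where Lemma \ref{hexagonproperties}(b) supplies exactly the required vanishing. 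A more geometric variant uses that a nontrivial element of $U_{\gamma_i}$ moves every chamber across $\partial\gamma_i$, so it would be enough to show that $[u,v]$ with $u \in U_\alpha$, $v \in U_\beta$ fixes a chamber of $-\gamma_i$ adjacent to that wall.

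The main obstacle is precisely this last step, namely ruling out an exotic contribution to the central pairing. This is where the restriction on the type is essential: in type $(2,4,4)$ the strip meets quadrangle ($C_2$) residues, and the Tits–Weiss classification of Moufang quadrangles does admit root data producing a nonzero commutator of two same-direction root groups, which is exactly why Condition $\nc$ can fail there. I would therefore set up the induction so that the only relations ever invoked are those of $A_1 \times A_1$-, $A_2$- and $G_2$-residues, for which Lemma \ref{hexagonproperties} gives the vanishing, and verify that a chain of walls of a single direction $\theta$ never forces a quadrangle relation. This is what confines the conclusion to the simply-laced affine types and type $(2,3,6)$, and, since none of these relations requires Condition $\costar$, it is also what lets us dispense with $\costar$ in this affine case.
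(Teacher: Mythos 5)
Your reduction is sound as far as it goes: in the Euclidean realization the interval $(\alpha, \beta)$ is indeed a finite chain of parallel roots, the base case follows from (RGD1), and your induction does place $[U_{\alpha}, U_{\beta}]$ in the centre of $N = \langle U_{\alpha}, U_{(\alpha, \beta)}, U_{\beta} \rangle$, so that the commutator induces a biadditive pairing. But the heart of the proof --- the vanishing of this central pairing --- is exactly the step you do not carry out, and the tools you propose cannot do it in the form stated. The commutator relations of \cite{TW02} are relations inside rank-$2$ residues, i.e.\ they only ever involve pairs of root groups whose walls \emph{cross}; your pair $\{\alpha, \beta\}$ and every pair $\{\gamma_i, \gamma_j\}$ from the chain have \emph{parallel} walls, so no rank-$2$ relation ever mentions two of these groups at once, and the $m$-maps of (RGD2) merely permute the parallel family among itself, giving a symmetry of the pairing but no collapse. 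The only way to bring the rank-$2$ relations to bear indirectly is to write $U_{\beta}$ inside a commutator $[U_{\mu}, U_{\nu}]$ of root groups with crossing walls and then apply the three subgroup lemma --- this is precisely the strategy of the proof of Theorem \ref{Mainresult} --- but the required generation statement $U_{(\mu, \nu)} = [U_{\mu}, U_{\nu}]$ is Lemma \ref{costarsimplerggenerate}, which needs Condition $\costar$. This undercuts your closing claim that the argument ``dispenses with $\costar$'': for type $(2, 3, 6)$ the $G_2$-residues may be $G_2(2)$ or $G_2(3)$, where $\costar$ fails and $[U_{\mu}, U_{\nu}]$ can be a proper subgroup of $U_{(\mu, \nu)}$, so the induction cannot close in exactly the cases the proposition is meant to cover. (For simply-laced diagrams $\costar$ is automatic, but note that the simply-laced case of Theorem \ref{Mainresult} itself quotes Proposition \ref{Propsimplylacedaffine}, so completing your sketch along those lines risks circularity.)

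The paper's own proof uses a global fact that your local induction never invokes, and this is the missing idea: by \cite[Proposition 11.110]{AB08} an affine RGD-system has a spherical RGD-system at infinity, and for nested roots $\alpha \subseteq \beta$ the two root groups $U_{\alpha}$ and $U_{\beta}$ are both subgroups of one single root group $U_{\gamma}$ at infinity (the parallelism class of their walls is one point at infinity). Since the type at infinity is simply-laced or $G_2$, this one group is abelian by Lemma \ref{hexagonproperties}(a), and $[U_{\alpha}, U_{\beta}] = 1$ follows immediately --- no induction, no case analysis, and no Condition $\costar$. To salvage your approach you would need some substitute for this embedding of both groups into a common abelian group; without it, the biadditive pairing you constructed has no reason to vanish, as the type $(2, 4, 4)$ examples with non-abelian root groups at infinity (Remark \ref{ExampleC2tilde}) show.
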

\begin{proof}
	Our proof uses the fact that any affine Moufang building has a spherical Moufang building at infinity. Before we start the proof, we will describe this shortly: Let $\tilde{\mathcal{D}} = (\tilde{G}, (\tilde{U}_{\alpha})_{\alpha \in \tilde{\Phi}})$ be an RGD-system of affine type $\tilde{X}_{n-1}$. Then there exists an RGD-system $\mathcal{D} = (G, (U_{\alpha})_{\alpha \in \Phi})$ of spherical type $X_{n-1}$ such that $U_{\alpha}, U_{\beta} \leq U_{\gamma}$, where $\alpha, \beta \in \tilde{\Phi}$ such that $\alpha \subseteq \beta$ and some $\gamma \in \Phi$ (cf. \cite[Proposition $11.110$]{AB08}).
	
	Now we start the proof of the proposition. Let $(G, (U_{\alpha})_{\alpha \in \Phi})$ be an RGD-system of simply-laced affine type $(W, S)$ or of type $(2, 3, 6)$ and let $\alpha \subsetneq \beta \in \Phi$. Then $U_{\alpha}, U_{\beta}$ are subgroups of an abelian group by Lemma \ref{hexagonproperties}. Thus $[U_{\alpha}, U_{\beta}] = 1$.
\end{proof}

\begin{remark}\label{ExampleC2tilde}
	The following is independent of Condition $\costar$: 
	\begin{enumerate}[label=(\alph*)]
		\item There are RGD-systems of type $(2, 4, 4)$ with abelian root groups at infinity.
		
		\item There are RGD-systems of type $(2, 4, 4)$ with non-abelian root groups at infinity.
	\end{enumerate}
\end{remark}

\section{Commutator relations}

In this section we let $\mathcal{D} = (G, (U_{\alpha})_{\alpha \in \Phi})$ be an RGD-system of type $(W, S)$.

\begin{remark}
	Let $G$ be a group and let $A, B, C \leq G$ be three subgroups of $G$. If $[[A, B], C] = [[A, C], B] = 1$ then $[A, [B, C]]=1$. This is known in the literature as the \textit{three subgroup lemma} (see also $(2.3)$ of \cite{TW02}).
\end{remark}

\begin{lemma}\label{geometrictrianglecomplete}
	Assume that the Coxeter diagram is the complete graph and let $\{ \alpha, \beta, \gamma \}$ be a triangle. Then $[U_{-\alpha}, U_{\beta}] = [U_{-\alpha}, U_{\gamma}] = 1$.
\end{lemma}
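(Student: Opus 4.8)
The plan is to exploit that when the Coxeter diagram is the complete graph, Proposition \ref{completefundamentaltriangle} guarantees the triangle $\{\alpha,\beta,\gamma\}$ is fundamental, so $(-\alpha,\beta)=\emptyset$ and $(-\gamma,\beta)=\emptyset$ hold by the very definition of a fundamental triangle (after matching up indices). The goal is to convert these emptiness statements about intervals into triviality of commutators. My strategy is to realize $U_\beta$ as a commutator $[U_\delta,U_\varepsilon]$ for a suitable prenilpotent pair generating a finite rank $2$ subsystem, and then apply the three subgroup lemma with the third group $U_{-\alpha}$ (respectively $U_{-\gamma}$).

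First I would set up the rank $2$ residue $R$ stabilized by $r_\beta$ and one of the other reflections, choosing a root basis $\{\delta,\varepsilon\}$ of the finite root subsystem attached to $R$ so that $\beta\in[\delta,\varepsilon]$ and in fact $U_\beta=[U_\delta,U_\varepsilon]$ via Lemma \ref{costarsimplerggenerate} (using Condition $\costar$, which the complete-graph hypothesis lets me invoke since every rank $2$ residue in a cyclic diagram has $m_{st}\ge 3$). Then I would verify the two hypotheses of the three subgroup lemma: $[[U_{-\alpha},U_\delta],U_\varepsilon]=1$ and $[[U_{-\alpha},U_\varepsilon],U_\delta]=1$. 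The key point is that because the triangle is fundamental, the intervals $(-\alpha,\delta)$ and $(-\alpha,\varepsilon)$ (equivalently the relevant nested/disjoint configurations) are empty, so each inner commutator $[U_{-\alpha},U_\delta]$ and $[U_{-\alpha},U_\varepsilon]$ is either trivial or lands in root groups $U_\zeta$ whose roots $\zeta$ commute with the remaining factor.

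The heart of the argument, and the main obstacle I anticipate, is the careful bookkeeping of \emph{which} intervals are forced to be empty by fundamentality and verifying that every root $\zeta$ appearing in the RGD-axiom commutator expansion $[U_{-\alpha},U_\delta]\subseteq U_{(-\alpha,\delta)}$ satisfies $[U_\zeta,U_\varepsilon]=1$. Concretely, one must check that the geometric picture in the complete-graph case rules out any root simultaneously separating $-\alpha$ from $\delta$ and interacting nontrivially with $\varepsilon$; this is where the fundamentality of $T$ (together with Proposition \ref{completefundamentaltriangle} applied to possibly smaller sub-triangles) does the work, since any such intermediate root would produce a non-fundamental sub-triangle, contradicting the fact that $m_{st}\ne 2$ forces all triangles to be fundamental. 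Once both three-subgroup hypotheses are established, the three subgroup lemma yields $[U_{-\alpha},[U_\delta,U_\varepsilon]]=[U_{-\alpha},U_\beta]=1$, and the identical argument with the roles of $\beta$ and $\gamma$ suitably arranged gives $[U_{-\alpha},U_\gamma]=1$.

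I would present the two claimed commutator relations symmetrically, doing the $[U_{-\alpha},U_\beta]=1$ case in full and then remarking that $[U_{-\alpha},U_\gamma]=1$ follows by the same reasoning after permuting the roles of the triangle's roots. The cleanest route avoids explicit angle computations and instead leans entirely on the combinatorial emptiness of prenilpotent intervals guaranteed by fundamentality, so the calculation stays at the level of group-theoretic commutator identities rather than hyperbolic trigonometry.
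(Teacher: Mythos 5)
Your first paragraph already contains everything that is needed, and the detour that follows is where the proposal goes wrong. Once Proposition \ref{completefundamentaltriangle} tells you that $\{\alpha,\beta,\gamma\}$ is fundamental, you have $(-\alpha,\beta)=\emptyset$ and, using the symmetries of prenilpotent intervals ($[\mu,\nu]=[\nu,\mu]$ and $[-\mu,\nu]=-[\mu,-\nu]$, so $(-\gamma,\alpha)=\emptyset$ forces $(-\alpha,\gamma)=\emptyset$), also $(-\alpha,\gamma)=\emptyset$. The pairs $\{-\alpha,\beta\}$ and $\{-\alpha,\gamma\}$ are prenilpotent because the corresponding reflections generate finite dihedral groups by (CT1). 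Hence axiom (RGD1) gives at once $[U_{-\alpha},U_{\beta}]\subseteq U_{(-\alpha,\beta)}=\langle\emptyset\rangle=1$, and likewise for $\gamma$. This is the paper's entire proof: the conversion from empty intervals to trivial commutators that you set out to engineer is already built into the definition of an RGD-system, so there is no need to realize $U_{\beta}$ as a commutator, no need for Lemma \ref{costarsimplerggenerate}, and no need for the three subgroup lemma.

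The detour is not merely superfluous; it breaks the proof in two ways. First, Lemma \ref{costarsimplerggenerate} requires Condition $\costar$, and your justification that the complete-graph hypothesis lets you invoke it is false: a complete diagram only forces $m_{st}\geq 3$, which does not exclude rank $2$ residues isomorphic to $B_2(2)$, $G_2(2)$, $G_2(3)$ or ${}^2F_4(2)$ (only the simply-laced hypothesis yields $\costar$ automatically). Since Lemma \ref{geometrictrianglecomplete} is stated for an arbitrary RGD-system --- Section 5 assumes nothing beyond the RGD axioms, and $\costar$ enters only in Theorem \ref{Mainresult} --- your argument would prove a strictly weaker statement even if completed. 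Second, the step you yourself flag as the heart of the argument, namely that every root $\zeta$ appearing in the expansion of $[U_{-\alpha},U_{\delta}]$ satisfies $[U_{\zeta},U_{\varepsilon}]=1$, is left unverified, and verifying it would require fundamentality of auxiliary triangles through $\delta$, $\varepsilon$ and $\zeta$ that you never exhibit; controlling those inner commutators would in any case be done by exactly the emptiness-plus-(RGD1) argument that, applied to $\beta$ itself, finishes the proof in one line.
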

\begin{proof}
	This follows directly from Proposition \ref{completefundamentaltriangle}.
\end{proof}

\begin{lemma}\label{geometrictrianglenotcomplete}
	Let $(W, S)$ be of type $(2, 6, 6), (2, 6, 8)$ or $(2, 8, 8)$ and let $\{ \alpha, \beta, \gamma \}$ be a triangle. Then we have $[[U_{-\alpha}, U_{\beta}], U_{\gamma}] = [[U_{-\alpha}, U_{\gamma}], U_{\beta}]=1$.
\end{lemma}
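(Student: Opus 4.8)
The plan is to establish $[[U_{-\alpha}, U_\beta], U_\gamma] = 1$; the second relation $[[U_{-\alpha}, U_\gamma], U_\beta] = 1$ then follows by interchanging the roles of $\beta$ and $\gamma$, an operation that fixes $-\alpha$ and preserves the hypothesis that $\{\alpha,\beta,\gamma\}$ is a triangle. By (CT1) the set $\{r_\alpha, r_\beta, r_\gamma\}$ is a reflection triangle, so $o(r_{-\alpha} r_\beta) = o(r_\alpha r_\beta)$ is finite; since $-\alpha \neq \pm\beta$ (as $r_\alpha \neq r_\beta$ and $\alpha \neq \beta$), the pair $\{-\alpha, \beta\}$ is prenilpotent and axiom (RGD1) gives $[U_{-\alpha}, U_\beta] \leq U_{(-\alpha, \beta)}$. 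If $(-\alpha, \beta) = \emptyset$, then $U_{(-\alpha, \beta)} = 1$, so $[U_{-\alpha}, U_\beta] = 1$ and there is nothing to prove. Hence I may assume $(-\alpha, \beta) \neq \emptyset$.

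Since none of the types $(2,6,6), (2,6,8), (2,8,8)$ equals $(2,3,8)$, I can apply Lemma \ref{angle1over2}(b) with $(\alpha_i, \alpha_j, \alpha_k) = (\alpha, \beta, \gamma)$. It produces a single root $\delta$ with $(-\alpha, \beta) = \{\delta\}$ and $\angle r_\gamma r_\delta = \angle r_\delta r_\gamma = \tfrac{1}{2}$. Consequently $[U_{-\alpha}, U_\beta] \leq U_{(-\alpha, \beta)} = U_\delta$, and therefore $[[U_{-\alpha}, U_\beta], U_\gamma] \leq [U_\delta, U_\gamma]$. It thus suffices to prove that $[U_\delta, U_\gamma] = 1$.

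The equality $\angle r_\gamma r_\delta = \tfrac{1}{2}$ means that the walls $\partial\gamma$ and $\partial\delta$ meet at a right angle, so $o(r_\gamma r_\delta) = 2$ and $\{\delta, \gamma\}$ is a prenilpotent pair contained in a unique rank-$2$ residue $R$. Writing $m := m_{\gamma, \delta}$ for the label of $R$, the only rank-$2$ labels occurring in the three diagrams force $m \in \{2, 6, 8\}$, all of which are even, as demanded by $o(r_\gamma r_\delta) = 2$. The walls through the vertex of $R$ are spaced by $\tfrac{\pi}{m}$ and $\partial\gamma, \partial\delta$ lie $\tfrac{m}{2}$ of these steps apart, whence $\vert (\delta, \gamma) \vert = \tfrac{m}{2} - 1$. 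I now finish by cases, using that $[U_\delta, U_\gamma]$ is governed by the rank-$2$ sub-RGD-system of type $I_2(m)$ attached to $R$: if $m = 2$ then $(\delta, \gamma) = \emptyset$ and $[U_\delta, U_\gamma] = 1$ by (RGD1); if $m = 6$ then $\vert (\delta, \gamma) \vert = 2$ and $[U_\delta, U_\gamma] = 1$ by Lemma \ref{hexagonproperties}(b); if $m = 8$ then $\vert (\delta, \gamma) \vert = 3$ and $[U_\delta, U_\gamma] = 1$ by Lemma \ref{octagonproperties}(a). In every case $[[U_{-\alpha}, U_\beta], U_\gamma] \leq [U_\delta, U_\gamma] = 1$, which completes the reduction.

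The genuine content is imported from the geometric Lemma \ref{angle1over2}(b), which collapses $(-\alpha, \beta)$ to a single root $\delta$ perpendicular to $\gamma$; once this is available the rest is essentially bookkeeping. I expect the only delicate points to be confirming that the perpendicular pair $\{\delta, \gamma\}$ sits in a single rank-$2$ residue of one of the listed types, and matching the count $\vert (\delta, \gamma) \vert \in \{0, 2, 3\}$ to the exact hypotheses of Lemmas \ref{hexagonproperties} and \ref{octagonproperties} (note that only part (a) of Lemma \ref{octagonproperties} is used, so no appeal to Condition $\costar$ is required).
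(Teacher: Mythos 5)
Your proof is correct and takes essentially the same route as the paper: the paper's own one-line proof cites exactly the three lemmas you invoke, namely Lemma \ref{angle1over2}(b) to collapse $(-\alpha,\beta)$ to a single root $\delta$ whose wall is perpendicular to $\partial\gamma$, and then Lemma \ref{hexagonproperties}(b) and Lemma \ref{octagonproperties}(a) (together with (RGD1) when $m_{\gamma,\delta}=2$) to conclude $[U_{\delta},U_{\gamma}]=1$. Your write-up merely supplies the bookkeeping (the count $\vert(\delta,\gamma)\vert = m_{\gamma,\delta}/2-1$ and the reduction to the rank-$2$ residue) that the paper leaves implicit.
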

\begin{proof}
	The claim follows from Lemma \ref{angle1over2}, Lemma \ref{hexagonproperties} and Lemma \ref{octagonproperties}.
\end{proof}

\begin{remark}
	If $m_{st} \in \{4, 6\}$ there exist so called \textit{long roots} and \textit{short roots}. These are indicated by the \textit{Dynkin diagram}. An arrow from $s$ to $t$ means that the node $t$ corresponds to a short root.
\end{remark}

\begin{theorem}\label{Mainresult}
	Assume that $\mathcal{D}$ satisfies Condition $\costar$. Then $\mathcal{D}$ satisfies Condition $\nc$, if one of the following hold:
	\begin{enumerate}[label=(\alph*)]
		\item The Coxeter diagram of $(W, S)$ is the complete graph.
		
		\item $(W, S)$ is of type $(2, 3, 8)$ or of type $(2, 8, 8)$.
		
		\item $(W, S)$ is simply-laced.
		
		\item $(W, S)$ is of type $(2, 6, 6)$ and has Dynkin diagram \begin{tikzpicture}
			\node[dynkinnode] (A) at (0,0) {};
			\node[dynkinnode] (B) at (1,0) {};
			\node[dynkinnode] (C) at (2,0) {};
			
			\draw (A) -- (B) -- (C);
			\doubleline{B}{A};
			\doubleline{B}{C}
		\end{tikzpicture} or \begin{tikzpicture}
			\node[dynkinnode] (A) at (0,0) {};
			\node[dynkinnode] (B) at (1,0) {};
			\node[dynkinnode] (C) at (2,0) {};
			
			\draw (A) -- (B) -- (C);
			\doubleline{A}{B};
			\doubleline{C}{B}
		\end{tikzpicture}.
		
		\item $(W, S)$ is of type $(2, 6, 8)$ and has diagram \begin{tikzpicture}
		\node[dynkinnode] (A) at (0,0) {};
		\node[dynkinnode] (B) at (1,0) {};
		\node[dynkinnode] (C) at (2,0) {};
		
		\node[above] (bc) at (1.5, 0) {$8$};
		
		\draw (A) -- (B) -- (C);
		\doubleline{B}{A}
		\end{tikzpicture}
	\end{enumerate}
\end{theorem}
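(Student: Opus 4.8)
Condition $\nc$ asks that $[U_\alpha, U_\beta] = 1$ whenever $\alpha \subsetneq \beta$, so fix such a nested pair. Since $\alpha \cap \beta = \alpha$ and $(-\alpha) \cap (-\beta) = -\beta$, the interval is $[\alpha, \beta] = \{ \gamma \in \Phi \mid \alpha \subseteq \gamma \subseteq \beta \}$; if $(\alpha, \beta) = \emptyset$ then axiom (RGD1) already gives $[U_\alpha, U_\beta] \subseteq U_{(\alpha, \beta)} = 1$, which I would use as the base case of an induction on $\vert (\alpha, \beta) \vert$. The engine for the inductive step is an algebraic reduction: the plan is to produce two roots $\beta_1, \beta_2$ with $o(r_{\beta_1} r_{\beta_2}) < \infty$ forming a basis of a finite rank-$2$ subsystem with $\beta \in (\beta_1, \beta_2)$, and such that $\{ -\alpha, \beta_1, \beta_2 \}$ is a triangle. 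Then Lemma \ref{costarsimplerggenerate} gives $U_\beta \leq U_{(\beta_1, \beta_2)} = [U_{\beta_1}, U_{\beta_2}]$; the triangle Lemmas \ref{geometrictrianglecomplete} and \ref{geometrictrianglenotcomplete} supply $[[U_\alpha, U_{\beta_1}], U_{\beta_2}] = [[U_\alpha, U_{\beta_2}], U_{\beta_1}] = 1$; and the three subgroup lemma then yields $[U_\alpha, [U_{\beta_1}, U_{\beta_2}]] = 1$, hence $[U_\alpha, U_\beta] = 1$. Whenever a bridging root is instead nested with $\alpha$, I would invoke the inductive hypothesis in place of the triangle lemma.

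\textbf{Cases (a) and (c).} For the complete graph the geometry is rigid: Proposition \ref{completefundamentaltriangle} shows every triangle is fundamental, so Lemma \ref{geometrictrianglecomplete} gives the stronger vanishing $[U_\alpha, U_{\beta_1}] = [U_\alpha, U_{\beta_2}] = 1$ and a single reduction step closes the argument. The bridging configuration is furnished by the uniqueness of the chamber in a cyclic hyperbolic triangle (Corollary \ref{uniquechamberhyperbolicrank3}), and a rank-$3$ residue of a higher-rank complete diagram is again complete, so no new phenomena arise. For the simply-laced case I would first note that a spherical residue contains no nested pair, so the only obstruction can come from an infinite rank-$3$ residue; among simply-laced $2$-spherical diagrams of rank $3$ the sole infinite type is the affine $\tilde A_2$, covered by Proposition \ref{Propsimplylacedaffine}, and all root groups are abelian by Lemma \ref{hexagonproperties}(a). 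Reducing the nested commutator to such residues then gives $\nc$.

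\textbf{Cases (b), (d), (e).} Here I would work in the geometric realization $\mathbb{H}^2$ and run the induction above, using the angle analysis of Lemma \ref{angle1over2}, Proposition \ref{Prop238} and Lemma \ref{Prop238auxres} both to locate the bridging roots $\beta_1, \beta_2$ and to control the set $(\alpha, \beta)$. In the generic situation the triangle $\{ -\alpha, \beta_1, \beta_2 \}$ falls under Lemma \ref{geometrictrianglenotcomplete} (types $(2,6,6)$, $(2,6,8)$, $(2,8,8)$), so the two double commutators vanish and the three subgroup lemma applies as before. The Dynkin orientations imposed in (d) and (e) are precisely what force the relevant short and long root groups to commute via Lemma \ref{hexagonproperties}(b), the remaining orientations being exactly those that produce the counterexamples of Proposition B.

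\textbf{Main obstacle.} The genuine difficulty is concentrated in the non-simply-laced hyperbolic types $(2,3,8)$ and $(2,8,8)$, where root groups are non-abelian, triangles need not be fundamental, and an angle equal to $\frac{2}{3}\pi$ forces intermediate commutators whose ``parts'' do not vanish outright. For these I would track such a part explicitly through the Tits--Weiss octagon commutator relations recorded in Lemma \ref{octagoncommutatorrelations}, together with Lemma \ref{octagonproperties}, and use the angle bookkeeping of Proposition \ref{Prop238} and Lemma \ref{Prop238auxres} to show that it lands in an auxiliary root group $U_\epsilon$ with $o(r_\alpha r_\epsilon) < \infty$ and $[U_\epsilon, U_{\beta_1}] = [U_\epsilon, U_{\beta_2}] = 1$; this is exactly the configuration needed to feed the induction back in and close it. Managing this case distinction, and verifying that the surviving commutator part always reaches such an $\epsilon$, is where I expect the proof to be most delicate.
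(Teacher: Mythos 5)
Your overall architecture does match the paper's: reduce $U_\beta$ into a commutator via Lemma \ref{costarsimplerggenerate}, kill the two double commutators $[[U_\alpha,U_{\beta_1}],U_{\beta_2}]$ and $[[U_\alpha,U_{\beta_2}],U_{\beta_1}]$ by the triangle lemmas or by induction, finish with the three subgroup lemma, and in the octagon types track the surviving part of the commutator into an auxiliary root group $U_\epsilon$ with $o(r_\alpha r_\epsilon)<\infty$. However, two essential ingredients are missing, and both are genuine gaps rather than routine details. First, you never say where the bridging roots $\beta_1, \beta_2$ come from, and your induction is not well-founded as set up. The paper takes a minimal gallery $(c_0,\ldots,c_k)$ crossing $\partial\alpha$ first and $\partial\beta$ last with $k$ minimal, lets $R$ be the rank-$2$ residue containing $c_{k-2}, c_{k-1}, c_k$, replaces the gallery by one passing through $\proj_R c_0$, and takes for $\{\beta_1,\beta_2\} = \{\alpha_j, \gamma\}$ the root basis of $R$ at that projection chamber; the induction is then on the gallery length $k$, and every pair to which the hypothesis is applied (such as $(\alpha,\alpha_j)$, $(\alpha,\gamma)$, $(\delta,\gamma)$) is witnessed by a strictly shorter gallery. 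Your induction on $\vert(\alpha,\beta)\vert$ does not obviously terminate: precisely in the case you delegate to induction, namely when the bridging root $\gamma$ is nested with $\alpha$, the wall $\partial\gamma$ cuts the same rank-$2$ residue as $\partial\beta$, one has $(\alpha,\gamma)\not\subseteq(\alpha,\beta)$ in general (a root strictly between $\alpha$ and $\gamma$ need not be contained in $\beta$), and so there is no reason that $\vert(\alpha,\gamma)\vert < \vert(\alpha,\beta)\vert$. The gallery/projection mechanism is thus needed both to produce $\beta_1,\beta_2$ with $U_\beta \subseteq [U_{\beta_1}, U_{\beta_2}]$ and to make the recursion terminate.

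Second, case (c) in rank greater than $3$ does not work as you argue it. A nested pair in a higher-rank simply-laced system need not lie in any rank-$3$ residue, and the group $\langle r_\alpha, r_{\beta_1}, r_{\beta_2}\rangle$, although an affine (type $\tilde A_2$) reflection subgroup, is in general \emph{not} parabolic, i.e.\ not the stabilizer of a residue; so Proposition \ref{Propsimplylacedaffine}, which is a statement about RGD-systems (hence about residues), cannot be applied to it directly. The paper closes exactly this gap with Lemma \ref{Theorem1.2CM} (resting on Caprace--M\"uhlherr): the affine reflection triangle is conjugate into an irreducible affine \emph{parabolic} $W_0$ of rank $\geq 3$, whose type can be any simply-laced affine type and not just $\tilde A_2$, and then $[U_\alpha, U_\beta]^g = [U_{g^{-1}\alpha}, U_{g^{-1}\beta}] = 1$ follows by applying Proposition \ref{Propsimplylacedaffine} to the residue corresponding to $W_0$. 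This passage from a reflection subgroup to a parabolic is a needed idea that is absent from your sketch; the assertion that ``the only obstruction can come from an infinite rank-$3$ residue of type $\tilde A_2$'' is not a correct reduction.
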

\begin{proof}
	Let $\alpha \subsetneq \beta$ be two roots. Then there exists a minimal gallery $(c_0, \ldots, c_k)$ such that $(\alpha_1 = \alpha, \ldots, \alpha_k = \beta)$ is the sequence of roots which are crossed by that minimal gallery, i.e. $\{ c_{i-1}, c_i \} \in \partial \alpha_i$. We can assume that $k$ is minimal with this property. We prove the hypothesis by induction on $k$. For $k \leq 2$ there is nothing to show. For $k=3$ we have $(\alpha, \beta) = \emptyset$ and hence $[U_{\alpha}, U_{\beta}] = 1$. Now let $k>3$ and let $R$ be the rank $2$ residue containing $c_{k-2}, c_{k-1}, c_k$. Then there exists a minimal gallery $(d_0 = c_0, \ldots, d_k = c_k)$ such that $d_j = \proj_R c_0$ for some $j$. Since the set of roots which are crossed by the gallery $(c_0, \ldots, c_k)$ equals the set of roots which are crossed by $(d_0, \ldots, d_k)$, the minimality of $k$ yields that $\alpha$ (resp. $\beta)$ is the first (resp. last) root which is crossed by the gallery $(d_0, \ldots, d_k)$. Thus we can assume that $\proj_R c_0$ is a chamber on the minimal gallery $(c_0, \ldots, c_k)$. Let $c_{j-1} = \proj_R c_0$. Then there exists a root $\gamma \in \Phi$ such that $\{ \gamma, \alpha_j \}$ is a root basis of the set of roots associated to $R$. Applying Lemma \ref{costarsimplerggenerate} we obtain $U_{\beta} \subseteq [U_{\alpha_j}, U_{\gamma}]$. If $o(r_{\alpha} r_{\alpha_j}) = o(r_{\alpha} r_{\gamma}) = \infty$, then $U_{\alpha}$ commutes with $U_{\alpha_j}$ and $U_{\gamma}$ by induction. Hence $U_{\alpha}$ commutes with the group generated by $U_{\alpha_j}$ and $U_{\gamma}$. Since this subgroup contains $U_{\beta}$ the claim follows. Thus we can assume that at least one of $o(r_{\alpha} r_j), o(r_{\alpha} r_{\gamma})$ is finite. In each case we will argue as before or use the three subgroup lemma. Thus we forget the root $\beta$ for the moment and focus on the roots $\alpha, \alpha_j, \gamma$. Thus the role of the roots $\alpha_j$ and $\gamma$ can be swapped. W.l.o.g. we can assume that $o(r_{\alpha} r_j) < \infty$. Then we consider the following two cases:
	\begin{enumerate}[label=(\alph*)]
		\item $o(r_{\alpha} r_{\gamma}) < \infty$: Then the set $\{ -\alpha, \alpha_j, \gamma \}$ is a triangle. If $(W, S)$ is as in $(a)$, we obtain $[U_{\alpha}, U_{\alpha_j}] = 1 = [U_{\alpha}, U_{\gamma}]$ by Lemma \ref{geometrictrianglecomplete}. As before, the claim follows. If $(W, S)$ is of type $(2, 6, 6), (2, 6, 8)$ or $(2, 8, 8)$ we use Lemma \ref{geometrictrianglenotcomplete} and the three subgroup lemma and the claim follows. If $(W, S)$ is of type $(2, 3, 8)$ we will show that $[[U_{\alpha}, U_{\alpha_j}], U_{\gamma}] = [[U_{\alpha}, U_{\gamma}], U_{\alpha_j}] = 1$. W.l.o.g. we can assume that there exists $\delta \in (\alpha, \alpha_j)$ such that $[U_{\delta}, U_{\gamma}] \neq 1$ (otherwise we are done). Furthermore, we have $\angle r_{\gamma} r_{\delta} \in \{ \frac{1}{3}, \frac{1}{2}, \frac{2}{3} \}$. Using Lemma \ref{octagonproperties}, we obtain $m_{\delta, \gamma} = 3$. Applying Proposition \ref{Prop238} we obtain $n := \vert (\alpha, \alpha_j) \vert \leq 3$ and $(\alpha, \gamma) = \emptyset$. This implies $[U_{\alpha}, U_{\gamma}] = 1$. If $n=1$ then the triangle $\{ -\alpha, \delta, \gamma \}$ is fundamental and hence $\angle r_{\gamma} r_{\alpha} = \frac{1}{2}$. Thus we have $o(r_{\alpha} r_{\epsilon}) < \infty$ and $(\alpha, \epsilon) = \emptyset$, where $\{ \epsilon \} = (\delta, \gamma)$. Since $\angle r_j r_{\epsilon} = \frac{1}{2}$, we obtain $[U_{\alpha_j}, U_{\epsilon}] = 1$ and hence $[U_{\alpha}, U_{\beta}] \subseteq U_{\epsilon} \cap U_{(\alpha, \beta)} = 1$. This finishes the claim. If $n=2$ then $U_{\alpha}$ corresponds to $U_{2k+1}$ of Example $(16.9)$ in \cite{TW02}, since $U_{\delta}$ is abelian. Using the commutator relations and the fact that $(\alpha, \delta) = \{ \delta_0 \}$ and $\angle r_{\gamma} r_{\delta_0} \in \{ \frac{1}{3}, \frac{1}{2} \}$, we obtain $[[U_{\alpha}, U_{\alpha_j}], U_{\gamma}] = 1$. If $n=3$, then we have $\angle r_{\alpha} r_j = \frac{1}{2}$ and we obtain $[U_{\alpha}, U_{\alpha_j}] = 1$ by Lemma \ref{octagonproperties}. Now we assume that $(W, S)$ is simply-laced. Then we obtain that $T := \{ r_{\alpha}, r_j, r_{\gamma} \}$ is an affine reflection triangle. Using Lemma \ref{Theorem1.2CM} we obtain an irreducible affine parabolic subgroup $W_0 \leq W$ of rank at least $3$ such that $\langle T \rangle$ is conjugated to a subgroup of $W_0$. Let $g\in W$ such that $\langle T \rangle^g \leq W_0$. Then $r_{\alpha}^g, r_{\beta}^g \in W_0$. In view of Proposition \ref{Propsimplylacedaffine} the residue corresponding to $W_0$ satisfies Condition $\nc$ and we have $[U_{r_{\alpha}^g}, U_{r_{\beta}^g}] = 1$. Using $r_{\alpha}^g = r_{g^{-1} \alpha}$, we obtain
		\[ [U_{\alpha}, U_{\beta}]^g = [U_{\alpha}^g, U_{\beta}^g] = [U_{g^{-1}\alpha}, U_{g^{-1}\beta}] = 1 \]
		
		\item $o(r_{\alpha} r_{\gamma}) = \infty$: Then we have $\alpha \subsetneq \gamma$. Using induction we obtain $[U_{\alpha}, U_{\gamma}] = 1$. If $o(r_{\delta} r_{\gamma}) = \infty$ for any $\delta \in (\alpha, \alpha_j)$ then the claim follows by induction and the three subgroup lemma. Thus we can assume that there exists $\delta \in (\alpha, \alpha_j)$ such that $\{ -\delta, \alpha_j, \gamma \}$ is a triangle. We first assume that $(W, S)$ is as in $(a)$. By Lemma \ref{geometrictrianglecomplete} we have $[U_{\delta}, U_{\gamma}] = 1$. The claim follows now from the three subgroup lemma. Let $(W, S)$ be of type $(2, 8, 8)$ and assume that $(\delta, \gamma) \neq \emptyset$. Then we have $\vert (\delta, \gamma) \vert = 1, (\delta, \alpha_j) = \emptyset$ and we have $(\delta', \gamma) = \emptyset$ for any $\delta' \in (\alpha, \delta)$ such that $o(r_{\delta'} r_{\gamma}) < \infty$ by Lemma \ref{angle1over2}. Since $(W, S)$ is of type $(2, 8, 8)$, we have $m_{\alpha, \alpha_j} = m_{\delta, \gamma} = 8$. We assume that $U_{\alpha_j}$ corresponds to $x_{2k}$ in Example $(16.9)$ of \cite{TW02}. Then $U_{\delta}$ corresponds to $x_{2k-1}$ and hence $[U_{\delta}, U_{\gamma}] =1$ by Lemma \ref{octagonproperties}. If $U_{\alpha_j}$ corresponds to $x_{2k-1}$ we distinguish the following two cases:
		\begin{enumerate}[label=(\roman*)]
			\item $\vert (\alpha, \alpha_j) \vert < 6$: It follows from the commutator relations of \cite{TW02}, that the $U_{\delta}$-part of $[U_{\alpha}, U_{\beta}]$ is contained in the centralizer of $U_{\delta}$ and such an element commutes with $U_{\gamma}$. Using induction and the three subgroup lemma the claim follows.

			\item $\vert (\alpha, \alpha_j) \vert = 6$: Because of the angles it follows that $o(r_{\alpha} r_{\epsilon}) < \infty$, where $\{ \epsilon \} = (\delta, \gamma)$. Using induction and the three subgroup lemma, we obtain $[U_{\alpha}, U_{\beta}] \subseteq U_{\epsilon} \cap U_{(\alpha, \beta)} = 1$ as above.
		\end{enumerate}
	
		Now we assume that $(W, S)$ is of type $(2, 6, 6)$ and the Dynkin diagram is as in the statement. We use the notations of \cite{TW02}. Then the middle node is in both residues either the field or the vector space. We can assume that $(\delta, \gamma) \neq \emptyset$. We obtain $m_{\alpha, \beta} = 6$ and $\vert (\delta, \gamma) \vert = 1$ by Lemma \ref{angle1over2}. We assume that $U_{\alpha_j}$ corresponds to $x_{2k-1}$ in Example $(16.8)$ of \cite{TW02}. Then $[U_{\delta}, U_{\gamma}]=1$. Now we assume that $U_{\alpha_j}$ corresponds to $x_{2k}$. If $k= 2$ we have $[U_{\alpha}, U_{\alpha_j}] = 1$. Thus we assume $k \in \{1, 3\}$. If $U_{\alpha}$ does not correspond to $U_1$ in Example $(16.9)$ of \cite{TW02} we obtain $[U_{\alpha}, U_{\alpha_j}] \subseteq \langle U_{\delta'} \mid \delta' \in (\alpha, \alpha_j) \backslash \{ \delta \} \rangle$ and $(\delta', \gamma) = \emptyset$ for any $\delta \neq \delta' \in (\alpha, \alpha_j)$ with $o(r_{\delta'} r_{\gamma}) < \infty$. Otherwise we obtain $o(r_{\alpha} r_{\delta'}) <\infty$, where $\delta' \in (\delta, \gamma)$ and $[U_{\delta'}, U_{\alpha_j}] = 1 = [U_{\delta'}, U_{\gamma}]$. Now the claim follows from the three subgroup lemma.
		
		Let $(W, S)$ be of type $(2, 6, 8)$ with diagram as in the statement. Again we use the notations of \cite{TW02}. We can assume that there exists $\epsilon \in (\delta, \gamma)$. Using Lemma \ref{angle1over2} we obtain that $\{ -\delta, \beta, \epsilon \}$ and $\{ -\epsilon, \gamma, \beta \}$ are fundamental triangles. Thus $U_{\delta}$ must be parametrized by the middle node. Using the commutator relations we obtain in both cases ($m_{\delta, \gamma} = 8$ and $m_{\delta, \gamma} = 6$) that $[U_{\delta}, U_{\gamma}] = 1$. Furthermore, we have $(\delta', \gamma) = \emptyset$ for any $\delta \neq \delta' \in (\alpha, \beta)$ with $o(r_{\delta'} r_{\gamma}) < \infty$. Using induction and the three subgroup lemma the claim follows.
		
		Let $(W, S)$ be of type $(2, 3, 8)$ and assume that $(\delta, \gamma) \neq \emptyset$. We distinguish the following cases: If $m_{\alpha, \alpha_j} = 3$, then $\vert (\delta, \gamma) \vert \in \{1, 2, 3\}$ and $m_{\delta, \gamma} = 8$. Since $U_{\delta}$ is abelian we have $[U_{\delta}, U_{\gamma}]=1$ for $\vert (\delta, \gamma) \vert \in \{1, 3\}$ by Lemma \ref{octagonproperties}. Now we assume that $\vert (\delta, \gamma) \vert = 2$. Let $\{\delta', \delta''\} = (\delta, \gamma)$ and assume that $\angle r_{\gamma} r_{\delta''} = \angle r_{\delta''} r_{\delta'} = \angle r_{\delta'} r_{\delta} = \frac{1}{8}$. Using the commutator relations of $(16.9)$ of \cite{TW02} one obtains that $[U_{\delta}, U_{\gamma}] \subseteq U_{\delta'}$ and $[[U_{\delta}, U_{\gamma}], U_{\gamma}] = 1$. Using the hyperbolic plane one obtains that $o(r_{\alpha} r_{\delta'}) < \infty$ and hence $[U_{\alpha}, U_{\beta}] \subseteq U_{\delta'} \cap U_{(\alpha, \beta)} = 1$ as above. Now we assume that $m_{\alpha, \alpha_j} = 8$. If $m_{\delta, \gamma} = 8$ one can show that $(\delta, \alpha_j) = \emptyset$ and the claim follows by Lemma \ref{Prop238auxres} and Lemma \ref{octagonproperties}. Thus we can assume that $m_{\delta, \gamma} = 3$. This implies $m_{\alpha, \alpha_j} = 8$ and $(\delta, \alpha_j) = \emptyset$. If $n := \vert (\alpha, \alpha_j) \vert \geq 5$ then $o(r_{\alpha} r_{\delta'}) < \infty$, where $\{ \delta' \} = (\delta, \gamma)$ and the claim follows as above using Lemma \ref{Prop238auxres} and induction. If $n \leq 3$ we obtain $o(r_{\alpha} r_{\gamma}) < \infty$ which is a contradiction to the assumption. Thus we can assume $n=4$. Since $[U_{2i+1}, U_{2i+6}]_{2i+5} = 1$ the claim follows from Lemma \ref{Prop238auxres}, induction and the three subgroup lemma. If $(W, S)$ is simply-laced, then $\{ -\delta, \alpha_j, \gamma \}$ is a triangle. Using similar arguments as in the previous case the claim follows. \qedhere
	\end{enumerate}
\end{proof}

\section{Non-cyclic hyperbolic cases}\label{sec:Non-cyclic}

As we have seen in Theorem \ref{Mainresult}, Condition $\costar$ implies Condition $\nc$ in a lot of cases, where $(W, S)$ is of rank $3$ and of hyperbolic type. In this section we will consider the hyperbolic cases of rank $3$ which were not mentioned in Theorem \ref{Mainresult}. In this section we let $\mathcal{D} = (G, (U_{\alpha})_{\alpha \in \Phi})$ be an RGD-system of type $(W, S)$. In the proofs of this section we use the notations of \cite{TW02}. From now on by a Coxeter complex we mean the geometric realization of $\Sigma(W, S)$.

\subsection*{The case $(2, 4, 6)$}

\begin{theorem}
	There exists an RGD-system of type $(2, 4, 6)$, in which Condition $\costar \ldots$
	\begin{enumerate}[label=(\alph*)]
		\item $\ldots$ implies Condition $\nc$.
		
		\item $\ldots$ does not imply Condition $\nc$.
	\end{enumerate}
\end{theorem}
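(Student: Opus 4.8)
The plan is to prove both assertions by exhibiting explicit RGD-systems of type $(2,4,6)$ realising the two possible behaviours. Write $S=\{r,s,t\}$ with $m_{rt}=4$, $m_{st}=6$ and $m_{rs}=2$, so that the three rank-$2$ residues are a Moufang quadrangle of type $C_2$ on $\{r,t\}$, a Moufang hexagon of type $G_2$ on $\{s,t\}$, and an $A_1\times A_1$ residue on $\{r,s\}$; note that $t$ is the central node shared by the quadrangle and the hexagon. By Lemma \ref{hexagonproperties} the root groups of the hexagon residue are abelian, whereas by the classification of Moufang quadrangles in \cite{TW02} the root groups of the quadrangle residue may be chosen either abelian or non-abelian. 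This dichotomy, which is exactly the step the induction in the proof of Theorem \ref{Mainresult} could not control for type $(2,4,6)$, is the source of the two cases. In both constructions I would take all defining parameters over data of sufficiently large cardinality so that none of the three rank-$2$ residues becomes $B_2(2)$, $G_2(2)$, $G_2(3)$ or $^2F_4(2)$; then $\costar$ holds automatically.

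For part $(a)$ I would take a split RGD-system of type $(2,4,6)$, for instance the one attached to the corresponding Kac-Moody group over a field $K$ with $\vert K\vert\geq 4$, in which every root group is abelian and the commutator relations are the Chevalley relations. To verify $\nc$ I would run the very induction on gallery length used in the proof of Theorem \ref{Mainresult}: for a nested pair $\alpha\subsetneq\beta$ one passes to the rank-$2$ residue at the far end of a minimal gallery, writes $U_\beta\subseteq[U_{\alpha_j},U_\gamma]$ by Lemma \ref{costarsimplerggenerate}, and reduces to a triangle $\{-\alpha,\alpha_j,\gamma\}$. The only steps of that argument that failed in general for $(2,4,6)$ were those requiring a quadrangle root group $U_\delta$ to be abelian; since here it is, every intermediate commutator vanishes and $\nc$ follows. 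Alternatively one may invoke the fact, recorded in Remark $3$ via \cite{Ti87}, that $\nc$ holds for such Kac-Moody examples.

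For part $(b)$ I would start instead from a foundation whose $C_2$-residue is a Moufang quadrangle with non-abelian root groups (for example one of quadratic-form or of involutory type over a large field), glued along the panel of type $t$ to the hexagon and to the $A_1\times A_1$ residue, with the non-abelian root group placed on the node $r$ so that the $t$-panel data match the abelian root group forced by the hexagon. Having fixed such an RGD-system, I would exhibit a nested pair $\alpha\subsetneq\beta$ for which, in the triangle $\{-\delta,\alpha_j,\gamma\}$ produced as above, the root $\delta$ is conjugate to the non-abelian kind; then the relevant commutator of \cite{TW02} is absorbed into $U_\delta$ and does not cancel. Tracing this through the explicit relations yields $[U_\alpha,U_\beta]\neq 1$, so $\nc$ fails while $\costar$ still holds.

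The hard part is the existence statement underlying part $(b)$: one must check that the prescribed rank-$2$ data actually integrate to a genuine rank-$3$ RGD-system, i.e. that the foundation obtained by amalgamating the quadrangle, the hexagon and the $A_1\times A_1$ residue along their shared panels is integrable in the sense of the local theory of $2$-spherical buildings. Concretely this amounts to verifying the compatibility of the two parametrisations along the $t$-panel and then identifying, by an honest computation with the commutator relations of \cite{TW02}, the precise nested commutator that survives. By contrast part $(a)$ is routine once the split model is in place.
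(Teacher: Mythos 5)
There is a genuine error in your part (a), and it is precisely the point the paper's proof turns on. The paper's counterexample for part (b) \emph{is} the split Kac--Moody group of type $(2,4,6)$: it takes the hexagonal system $(\mathbb{E}/\KK)^{\circ}$ of type $(1/\KK)$ and the involutory quadrangle $\mathcal{Q}_I(\KK,\KK_0,\sigma)$, then specializes $\sigma=\id$ and $\mathbb{E}=\KK=\KK_0$, so that \emph{all} root groups are abelian and the commutator relations are the Chevalley relations. In a suitable configuration of roots $\alpha,\delta,\gamma,\epsilon,\epsilon'$ in the Coxeter complex the paper computes
\[ \left[ x_{\alpha}(v), \prod_{i=1}^{n} [x_{\delta}(w_i), x_{\gamma}(k_i)] \right] = x_{\epsilon'}\Bigl( -6v \sum_{i=1}^{n} w_i k_i \Bigr), \]
so $[x_{\alpha}(1),[x_{\delta}(1),x_{\gamma}(1)]]=x_{\epsilon'}(-6)\neq 1$ whenever $\mathrm{char}\,\KK\neq 2,3$. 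Hence the split Kac--Moody group over $\mathbb{F}_5$ or over $\QQ$ --- which falls under your ``any field with $\vert K\vert\geq 4$'' and certainly satisfies Condition $\costar$ --- violates Condition $\nc$. Your argument that abelianness of the root groups forces ``every intermediate commutator'' to vanish conflates commutativity of root groups with vanishing of the \emph{structure constants} of the nested commutator relations; the obstruction here is the constant $-6$, not non-abelianness, and your appeal to the remark that $\nc$ ``often holds'' for Kac--Moody groups is not a proof (and fails here). The paper repairs exactly this by choosing the characteristic: its part (a) is the same split system over a field of characteristic $2$ with at least $4$ elements, where the obstruction $-6$ vanishes.

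Your part (b) takes a genuinely different route (a foundation whose quadrangle residue has non-abelian root groups), but as you yourself note, it leaves open the two steps that constitute the whole content of the statement. First, existence: you must show your amalgamated foundation is integrable to a rank-$3$ RGD-system, which is a deep problem and not a routine compatibility check --- most $2$-spherical foundations are not integrable. Second, even granting existence, you give no computation: ``the commutator is absorbed into $U_{\delta}$ and does not cancel'' is a hope, not an argument, and since (as above) failure of $\nc$ already occurs in the all-abelian split case, non-abelianness is neither the relevant mechanism nor obviously sufficient. The paper avoids both difficulties at once: both of its examples are split Kac--Moody groups (existence is standard, via an explicit Cartan matrix), it isolates the geometric configuration $(\epsilon',\delta)=(\epsilon',\gamma)=(\alpha,\gamma)=\emptyset$, $\angle r_{\alpha}r_{\epsilon}=\angle r_{\epsilon}r_{\delta}=\angle r_{\delta}r_{\gamma}=\frac{1}{6}$, $\angle r_{\epsilon'}r_{\epsilon}=\frac{1}{4}$ in the Coxeter complex, and it evaluates the commutator in coordinates, so that the dichotomy between (a) and (b) is simply whether $6=0$ in the ground field.
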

\begin{proof}
	Let $\KK$ be a field. We take the hexagonal system $(\mathbb{E} / \KK)^{\circ}$ of type $(1/\KK)$ (cf. Example $(15.20)$ of \cite{TW02}) and for the quadrangle we take $\mathcal{Q}_I(\KK, \KK_0, \sigma)$ (cf. Example $(16.2)$ of \cite{TW02}). We assume that the middle node of the diagram is parametrized by the field $\KK$:
	\[ \begin{tikzpicture}
	
	\node[below] (a) at (0, -0.1) {$\KK_0$};
	\node[below] (b) at (2, -0.1) {$\KK$};
	\node[below] (c) at (4, -0.1) {$\mathbb{E}$};
	
	\node[above] (d) at (1, 0) {$4$};
	\node[above] (e) at (3, 0) {$6$};
	
	\filldraw [black] 	(0, 0) circle (2pt)
	(2, 0) circle (2pt)
	(4, 0) circle (2pt);
	\draw (0, 0) -- (2, 0) -- (4, 0);
\end{tikzpicture} \]

In the Coxeter complex of type $(2, 4, 6)$ there exists the following figure such that $(\epsilon', \delta) = (\epsilon', \gamma) = (\alpha, \gamma) = \emptyset, \angle r_{\epsilon '} r_{\epsilon} = \frac{1}{4}$ and $\angle r_{\alpha} r_{\epsilon} = \angle r_{\epsilon} r_{\delta} = \angle r_{\delta} r_{\gamma} = \frac{1}{6}$:
\[ \begin{tikzpicture}[scale=0.5]

\draw (0  ,  0  ) to (10 ,  0  );
\draw (10 ,  0  ) to (10 , -0.3);
\draw (9.9,  0  ) to (9.9, -0.3);
\draw (9.8,  0  ) to (9.8, -0.3);
\draw (9.9, -0.3) node [below]{$\alpha$};

\draw (0, -1) to (6, 5);
\draw (0  , -1  ) to (0.2, -1.2);
\draw (0.1, -0.9) to (0.3, -1.1);
\draw (0.2, -0.8) to (0.4, -1  );
\draw (0.3, -1.1) node [below]{$\delta$};

\draw (5, 5) to (8, -1);
\draw (8  , -1  ) to (7.8, -1.1);
\draw (7.95, -0.9) to (7.75, -1);
\draw (7.9, -0.8) to (7.7, -0.9);
\draw (7.75, -1.1) node [below]{$\gamma$};

\draw (0  , -0.5 ) to (9, 4);
\draw (9  ,  4   ) to (9.1, 3.8);
\draw (8.9,  3.95) to (9  , 3.75);
\draw (8.8,  3.9 ) to (8.9, 3.7);
\draw (9, 3.75) node [below]{$\epsilon$};

\draw (0  , 2.6) to (10 , 2.6);
\draw (0  , 2.6) to (0  , 2.3);
\draw (0.1, 2.6) to (0.1, 2.3);
\draw (0.2, 2.6) to (0.2, 2.3);
\draw (0.1, 2.3) node [below]{$\epsilon'$};

\end{tikzpicture} \]

Thus $U_{\epsilon}$ is parametrized by the additive group of the field $\KK$. Using induction and the commutator relations (cf. Chapter $16$ of \cite{TW02}) we obtain:
\[ \left[ x_{\alpha}(v), \prod_{i=1}^{n} [x_{\delta}(w_i), x_{\gamma}(k_i)] \right] = x_{\epsilon'}\left( -\sum_{i=1}^{n} T(v, w_i)^{\sigma}k_i + k_i^{\sigma} T(v, w_i) \right) \]

Now let $\sigma = \id$ and $\mathbb{E} = \KK = \KK_0$. Using the definition of $T$ and $\sigma$ in this example the commutator relation above reduces to
\[ \left[ x_{\alpha}(v), \prod_{i=1}^{n} [x_{\delta}(w_i), x_{\gamma}(k_i)] \right] = x_{\epsilon'}\left( -6v \sum_{i=1}^{n} w_i k_i \right) \]	

Now we assume that $\KK$ is a field of characteristic different from $2$ and $3$. Then we obtain
\[ [x_{\alpha}(1), [x_{\delta}(1), x_{\gamma}(1)]] = x_{\epsilon'}(-6) \neq 1 \]
Thus there must be a root in the residue with root basis $\{ \delta, \gamma \}$, which does not commute with $x_{\alpha}$ (otherwise the previous commutator would be trivial). Hence this RGD-system does not satisfy Condition $\nc$ and part $(b)$ follows. For part $(a)$ we could take a field of characteristic $2$ with at least $4$ elements. The existence of such an RGD-system follows from the theory of groups of Kac-Moody type. For example one can take the split Kac-Moody group over $\mathbb{K}$ of type $(W, S)$ with Cartan matrix $\begin{pmatrix}
2 & -1 & 0 \\
-2 & 2 & -1 \\
0 & -3 & 2
\end{pmatrix}$.
\end{proof}

\subsection*{The case $(2, 6, 6)$}

\begin{theorem}
	There exists an RGD-system of type $(2, 6, 6)$, in which Condition $\costar$ implies Condition $\nc$.
\end{theorem}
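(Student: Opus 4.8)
The plan is to exhibit one RGD-system of type $(2,6,6)$ that satisfies both Condition $\costar$ and Condition $\nc$, so that the implication $\costar \Rightarrow \nc$ holds for it. The essential observation is that Theorem \ref{Mainresult}$(d)$ already settles this implication for every RGD-system of type $(2,6,6)$ whose Dynkin diagram is one of the two symmetric diagrams displayed there (both arrows pointing toward, or both away from, the central node). Hence it suffices to produce a single RGD-system of type $(2,6,6)$ with such a symmetric diagram which satisfies Condition $\costar$.

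First I would appeal to the theory of groups of Kac-Moody type, exactly as in the construction for type $(2,4,6)$ above. Let $\KK$ be any field with $\vert \KK \vert > 3$, and let $\mathcal{D} = (G, (U_{\alpha})_{\alpha \in \Phi})$ be the RGD-system arising from the split Kac-Moody group over $\KK$ associated with the generalized Cartan matrix
\[ \begin{pmatrix} 2 & -1 & 0 \\ -3 & 2 & -3 \\ 0 & -1 & 2 \end{pmatrix}. \]
Here the two off-diagonal products $(-1)(-3) = 3$ encode the two edges of order $6$, the vanishing corner entries encode the commuting pair ($m_{st} = 2$), and the sign pattern orients the two $G_2$-edges the same way relative to the central node; thus $\mathcal{D}$ is of type $(2,6,6)$ and its Dynkin diagram is one of the two symmetric diagrams of Theorem \ref{Mainresult}$(d)$.

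Next I would verify Condition $\costar$ for $\mathcal{D}$. The three rank-$2$ residues attached to pairs of simple roots are of type $A_1 \times A_1$ for the commuting pair and of type $G_2$ for the two remaining pairs. For each $G_2$-residue one has $X_{\alpha, \beta} / Z(X_{\alpha, \beta}) \cong G_2(\KK)$, and since $\vert \KK \vert > 3$ this is isomorphic neither to $G_2(2)$ nor to $G_2(3)$; being of type $G_2$ it is also not isomorphic to $B_2(2)$ or ${}^2F_4(2)$. For the commuting pair the central quotient is a direct product of two rank-$1$ groups, hence none of the four forbidden groups. Therefore $\mathcal{D}$ satisfies Condition $\costar$.

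Finally, since $\mathcal{D}$ realizes a Dynkin diagram as in Theorem \ref{Mainresult}$(d)$ and satisfies $\costar$, that theorem shows that $\mathcal{D}$ satisfies Condition $\nc$; in particular the implication $\costar \Rightarrow \nc$ holds for $\mathcal{D}$, which is the assertion. I expect the only genuinely delicate step to be the verification of Condition $\costar$ --- concretely, confirming that the bound $\vert \KK \vert > 3$ excludes exactly the forbidden isomorphism types for the two $G_2$-residues --- whereas the existence of a Kac-Moody RGD-system realizing the prescribed symmetric diagram, together with the final appeal to Theorem \ref{Mainresult}$(d)$, is routine.
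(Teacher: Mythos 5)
Your proposal is correct and takes essentially the same route as the paper: the paper's proof also considers the split Kac--Moody group over a field $\KK \neq \FF_2, \FF_3$ with Cartan matrix $\begin{pmatrix} 2 & -1 & 0 \\ -3 & 2 & -3 \\ 0 & -1 & 2 \end{pmatrix}$ (or its transpose-pattern companion), notes that the resulting Dynkin diagram is one of the symmetric diagrams in Theorem \ref{Mainresult}$(d)$, and concludes from that theorem. Your explicit verification of Condition $\costar$ via $G_2(\KK) \not\cong G_2(2), G_2(3)$ is a detail the paper leaves implicit in its field restriction, but it is correct and matches the paper's intent.
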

\begin{proof}
	Consider the split Kac-Moody group over $\mathbb{K} \neq \{ \mathbb{F}_2, \FF_3 \}$ of type $(2, 6, 6)$ with Cartan matrix $\begin{pmatrix}
	2 & -3 & 0 \\ -1 & 2 & -1 \\ 0 & -3 & 2
	\end{pmatrix}$ or $\begin{pmatrix}
	2 & -1 & 0 \\ -3 & 2 & -3 \\ 0 & -1 & 2
	\end{pmatrix}$. Then the corresponding Dynkin diagram is as in Theorem \ref{Mainresult} and the claim follows.
\end{proof}

\begin{theorem}
	There exists an RGD-system of type $(2, 6, 6)$, in which Condition $\costar$ does not imply Condition $\nc$.
\end{theorem}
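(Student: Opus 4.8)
The plan is to mirror the construction used for type $(2,4,6)$, but over the Dynkin diagram of type $(2,6,6)$ that does \emph{not} occur in Theorem \ref{Mainresult}~(d): the asymmetric orientation in which the middle node is a long root with respect to one neighbour and a short root with respect to the other. Concretely, I would take the split Kac-Moody group over a field $\KK \notin \{\FF_2, \FF_3\}$ of characteristic different from $2$ and $3$ whose generalised Cartan matrix is
\[ A = \begin{pmatrix} 2 & -3 & 0 \\ -1 & 2 & -3 \\ 0 & -1 & 2 \end{pmatrix}. \]
This $A$ is symmetrisable with symmetrising vector $(1,3,9)$, so the two rank-$2$ residues are of type $G_2$ while the middle node is long on the left bond and short on the right bond --- precisely the orientation excluded from Theorem \ref{Mainresult}~(d), where the symmetrising vector is instead $(1,3,1)$ or $(3,1,3)$. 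Because $\KK \neq \FF_2, \FF_3$, none of the quotients $X_{\alpha,\beta}/Z(X_{\alpha,\beta})$ is isomorphic to $G_2(2)$ or $G_2(3)$, so Condition $\costar$ holds.

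Next I would reproduce the geometric configuration of the $(2,4,6)$ argument inside the Coxeter complex of type $(2,6,6)$: a nested pair $\alpha \subsetneq \beta$ together with intermediate roots $\delta, \gamma, \epsilon, \epsilon'$, chosen by Lemma \ref{angle1over2} so that $r_\alpha$ is parallel to $r_{\epsilon'}$ while the walls $r_\alpha, r_\epsilon, r_\delta, r_\gamma$ form a chain of angles $\tfrac{1}{6}$, with $\beta$ lying in the hexagonal $\{\delta, \gamma\}$-residue and $\alpha \subsetneq \beta$. By Lemma \ref{costarsimplerggenerate} the subgroup $\langle U_\delta \cup U_\gamma \rangle$ equals $\langle U_\rho \mid \rho \in [\delta,\gamma] \rangle$, so it contains $U_\beta$; hence it suffices to show that $U_\alpha$ does not centralise this residue.

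I would then run the explicit hexagonal commutator relations of Example $(16.8)$ of \cite{TW02} for this parametrisation, inducting over a product of commutators exactly as in the $(2,4,6)$ computation, to obtain a relation of the shape
\[ \Bigl[ x_\alpha(v), \prod_{i=1}^{n} [x_\delta(w_i), x_\gamma(k_i)] \Bigr] = x_{\epsilon'}\Bigl( c \cdot v \sum_{i=1}^{n} w_i k_i \Bigr), \]
where, after specialising the hexagonal system to the split reduced form $\mathbb{E} = \KK = \KK_0$ with $\sigma = \id$, the structure constant $c$ becomes a nonzero integer (essentially the analogue of the ``$-6$'' appearing in the $(2,4,6)$ case). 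Choosing $v = w_1 = k_1 = 1$ then yields $[x_\alpha(1), [x_\delta(1), x_\gamma(1)]] = x_{\epsilon'}(c) \neq 1$ in characteristic different from $2$ and $3$. Consequently $U_\alpha$ fails to centralise $\langle U_\delta \cup U_\gamma \rangle$, so $[U_\alpha, U_\beta] \neq 1$ for a root $\beta$ of the $\{\delta,\gamma\}$-residue with $\alpha \subsetneq \beta$, and Condition $\nc$ fails while Condition $\costar$ holds.

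The main obstacle is not conceptual but a matter of bookkeeping: I must pin down exactly which of the five roots is parametrised by the ``field'' and which by the ``module'' as dictated by the asymmetric orientation, and then verify that the structure constant $c$ genuinely survives rather than vanishing for formal reasons. This is precisely the point at which the asymmetric diagram differs from the two diagrams of Theorem \ref{Mainresult}~(d): for the symmetric orientations the corresponding coefficient is forced to be zero (which is why Condition $\nc$ holds there), whereas the ``flow-through'' orientation leaves it free, and over a field of characteristic $\neq 2, 3$ it can be made nonzero. A secondary, routine check is that the five roots with the prescribed angles really occur, which follows from Lemma \ref{angle1over2} and the hyperbolic-plane geometry of the triangle section.
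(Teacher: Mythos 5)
Your proposal is correct and follows essentially the same route as the paper's proof: the paper also picks the asymmetric (``flow-through'') orientation of the $(2,6,6)$ Dynkin diagram excluded from Theorem \ref{Mainresult}~(d) (its Cartan matrix is exactly the transpose of yours, realized as a split Kac-Moody group over $\KK \neq \FF_2$ of characteristic $\neq 3$), sets up the identical configuration $\alpha, \delta, \gamma, \epsilon, \epsilon'$ in the Coxeter complex, and carries out the commutator computation you defer, obtaining $\bigl[ x_{\alpha}(v), \prod_{i=1}^{n} [x_{\delta}(w_i), x_{\gamma}(k_i)] \bigr] = x_{\epsilon'}\bigl( 9v \sum_{i=1}^{n} w_i k_i \bigr)$, so the structure constant you call $c$ is $9$ and only characteristic $\neq 3$ is actually needed. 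The sole cosmetic slips on your side are the $\sigma$ and $\KK_0$ carried over from the quadrangle of the $(2,4,6)$ case (the hexagonal system $(15.20)$ used here has neither; the paper instead parametrizes the short-root node by a vector space $V$ over $\KK$) and the superfluous exclusion of characteristic $2$.
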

\begin{proof}
	Assume that \begin{tikzpicture}
	\node[dynkinnode] (A) at (0,0) {};
	\node[dynkinnode] (B) at (1,0) {};
	\node[dynkinnode] (C) at (2,0) {};
	
	\draw (A) -- (B) -- (C);
	\doubleline{A}{B};
	\doubleline{B}{C}
	\end{tikzpicture} is the Dynkin diagram of the RGD-system. Let $\KK$ be a field and $V$ be a vector space over $\KK$. Then the right node is parametrized by $V$ and the other two nodes are parametrized by $\KK$.

	In the Coxeter complex of type $(2, 6, 6)$ there exists the following figure such that $(\epsilon', \delta) = (\epsilon', \gamma) = (\alpha, \gamma) = \emptyset, \angle r_{\epsilon '} r_{\epsilon} = \angle r_{\alpha} r_{\epsilon} = \angle r_{\epsilon} r_{\delta} = \angle r_{\delta} r_{\gamma} = \frac{1}{6}$ and $o(r_{\alpha} r_{\epsilon'}) = o(r_{\alpha} r_{\gamma}) = \infty$:
\[ \begin{tikzpicture}[scale=0.5]

\draw (0  ,  0  ) to (10 ,  0  );
\draw (10 ,  0  ) to (10 , -0.3);
\draw (9.9,  0  ) to (9.9, -0.3);
\draw (9.8,  0  ) to (9.8, -0.3);
\draw (9.9, -0.3) node [below]{$\alpha$};

\draw (0, -1) to (5, 4);
\draw (0  , -1  ) to (0.2, -1.2);
\draw (0.1, -0.9) to (0.3, -1.1);
\draw (0.2, -0.8) to (0.4, -1  );
\draw (0.3, -1.1) node [below]{$\delta$};

\draw (3.4, 4) to (9, 1.2);
\draw (9, 1.2) to (8.9, 1.0);
\draw (8.9, 1.25) to (8.8, 1.05);
\draw (8.8, 1.3) to (8.7, 1.1);
\draw (8.8, 1.2) node [right]{$\gamma$};

\draw (0  , -0.5 ) to (9, 4);
\draw (9  ,  4   ) to (9.1, 3.8);
\draw (8.9,  3.95) to (9  , 3.75);
\draw (8.8,  3.9 ) to (8.9, 3.7);
\draw (9, 3.75) node [below]{$\epsilon$};

\draw (0  , 2.6) to (10 , 2.6);
\draw (0  , 2.6) to (0  , 2.3);
\draw (0.1, 2.6) to (0.1, 2.3);
\draw (0.2, 2.6) to (0.2, 2.3);
\draw (0.1, 2.3) node [below]{$\epsilon'$};

\end{tikzpicture} \]

We can assume that $U_{\alpha}$ is parametrized by the vector space $V$. Then $U_{\epsilon}$ is parametrized by the field $\KK$. We remark that $[U_{\epsilon}, U_{\gamma}] \neq 1$ because of the diagram. We use for the hexagon the hexagonal system defined in $(15.20)$ of \cite{TW02}. Using the commutator relations (cf. Chapter $16$ of \cite{TW02}) we obtain:
\[ \left[ x_{\alpha}(v), \prod_{i=1}^{n} [x_{\delta}(w_i), x_{\gamma}(k_i)] \right] = x_{\epsilon'}\left( \sum_{i=1}^{n} T\left( T(v, w_i), k_i \right) \right) = x_{\epsilon '}\left( 9v \sum_{i=1}^{n} w_i k_i \right) \]
For $0 \neq w\in V$ we obtain $[x_{\alpha}(1), [x_{\delta}(w), x_{\gamma}(1)]] = x_{\epsilon'}(9w) \neq 1$ if the characteristic of $\KK$ is different from $3$ and hence such an RGD-system does not satisfy Condition $\nc$. An example of such an RGD-system is provided by the split Kac-Moody group over $\mathbb{K} \neq \mathbb{F}_2$ of characteristic $\neq 3$ with Cartan matrix $\begin{pmatrix}
2 & -1 & 0 \\ -3 & 2 & -1 \\ 0 & -3 & 2
\end{pmatrix}$.
\end{proof}

\subsection*{The case $(2, 4, 8)$}

\begin{theorem}
	There exists an RGD-system of type $(2, 4, 8)$ such that Condition $\costar \ldots$
	\begin{enumerate}[label=(\alph*)]
		\item $\ldots$ implies Condition $\nc$.

		\item $\ldots$ does not imply Condition $\nc$.
	\end{enumerate}
\end{theorem}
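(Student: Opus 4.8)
The plan is to treat both parts in a single framework, exactly mirroring the preceding cases $(2,4,6)$ and $(2,6,6)$: I would produce two parametrised RGD-systems of type $(2,4,8)$, both satisfying Condition $\costar$, and separate them by evaluating one nested commutator. First I would fix the Dynkin diagram so that the $8$-edge carries an octagon, i.e.\ the Ree--Tits data of Example $(16.9)$ in \cite{TW02}. This forces the parametrising field $\KK$ to have characteristic $2$ together with a Tits endomorphism $\sigma$ with $t^{\sigma^2}=t^2$, and it pins down the octagon commutator relations through Lemma \ref{octagoncommutatorrelations}. On the $4$-edge I would put a quadrangle (for definiteness $\mathcal{Q}_I(\KK,\KK_0,\sigma)$ as in Example $(16.2)$ of \cite{TW02}), sharing the middle node with the octagon. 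Condition $\costar$ is then guaranteed by excluding the forbidden small residues $B_2(2)$ and $^2F_4(2)$, i.e.\ by taking $\vert \KK \vert > 2$, and by Lemma \ref{octagonproperties} the octagon side is compatible with $\costar$.

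Next I would locate, in the Coxeter complex of type $(2,4,8)$, a figure containing a nested pair together with roots $\delta,\gamma$ spanning the octagon residue, arranged analogously to the $(2,6,6)$ picture: roots $\alpha,\delta,\gamma,\epsilon,\epsilon'$ with $(\epsilon',\delta)=(\epsilon',\gamma)=(\alpha,\gamma)=\emptyset$, with the octagon-side angles equal to $\frac{1}{8}$ (read off via Lemma \ref{angle1over2} and Proposition \ref{Prop238}), and with $o(r_\alpha r_{\epsilon'})=o(r_\alpha r_\gamma)=\infty$ so that $\alpha$ is genuinely nested against the relevant octagon roots. By Lemma \ref{costarsimplerggenerate} the root group $U_\beta$ for the appropriate $\beta\in(\delta,\gamma)$ with $\alpha\subsetneq\beta$ is recovered as $[U_\delta,U_\gamma]$, so the double commutator $[U_\alpha,[U_\delta,U_\gamma]]$ landing in $U_{\epsilon'}$ is exactly what Condition $\nc$ forbids for the nested pair $\{\alpha,\beta\}$.

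The computational heart is to expand $[x_\alpha(v),\prod_{i=1}^{n}[x_\delta(w_i),x_\gamma(k_i)]]$ using Lemma \ref{octagoncommutatorrelations} together with the rank-$3$ relations of Chapter $16$ of \cite{TW02}, and then to isolate the $U_{\epsilon'}$-component. As in the hexagon cases this should collapse, after an induction on $n$ over the intermediate factors, to $x_{\epsilon'}$ applied to a single $\sigma$-twisted bilinear expression in the parameters $v,w_i,k_i$. From that closed form I would read off the precise field condition under which the $U_{\epsilon'}$-component is nonzero. For part $(b)$ I would then choose $\KK$ and $\sigma$ (with $\vert\KK\vert>2$) so that this component is nonzero for some parameter choice, giving $[U_\alpha,U_\beta]\neq 1$ and hence a violation of $\nc$ while $\costar$ still holds; for part $(a)$ I would choose $\KK,\sigma$ making the same expression vanish identically, so that the induction of Theorem \ref{Mainresult} goes through and $\nc$ holds. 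Existence of the required groups follows, as in the $(2,4,6)$ case, from the theory of (twisted) groups of Kac--Moody type realising the prescribed octagon--quadrangle foundation.

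The main obstacle will be the bookkeeping forced by Lemma \ref{octagoncommutatorrelations}: because the octagon relation has six output factors and mixes $\sigma$-twisted monomials such as $t^{\sigma+1}u_2^{\sigma+2}$, extracting the single $U_{\epsilon'}$-coefficient and proving it is nonzero (respectively zero) for the chosen field is markedly more delicate than the hexagon computation. Moreover the characteristic-$2$ constraint imposed by the octagon removes the clean numerical nonvanishing (the factors $-6$ and $9$) that separated the examples in the $(2,4,6)$ and $(2,6,6)$ proofs; here the distinction between $(a)$ and $(b)$ must instead be extracted from finer properties of the pair $(\KK,\sigma)$, for instance from whether the relevant $\sigma$-twisted form degenerates, so verifying both the nonvanishing and the vanishing regimes is where the real work lies.
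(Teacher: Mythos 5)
Your overall architecture---an octagon on the $8$-edge, a quadrangle on the $4$-edge sharing the middle node, the five-root figure $\alpha,\delta,\gamma,\epsilon,\epsilon'$ in the Coxeter complex, the computation of the $U_{\epsilon'}$-component of $\bigl[x_{\alpha},\prod_i[x_{\gamma}(k_i),x_{\delta}(t_i,u_i)]\bigr]$, and existence via a Suzuki--Ree twist as in \cite{He90}---is exactly the paper's. But there is a genuine gap at the one place you yourself flag as ``where the real work lies,'' and it is not mere bookkeeping. First, your quadrangle $\mathcal{Q}_I(\KK,\KK_0,\sigma)$ is ill-defined: Example $(16.2)$ of \cite{TW02} requires an involutory set, i.e.\ $\sigma^2=\id$, whereas the Tits endomorphism of an octagonal set satisfies $t^{\sigma^2}=t^2$, so the octagon's $\sigma$ can never serve as the quadrangle's involution. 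The two structures need separate maps, and in the paper the entire dichotomy between $(a)$ and $(b)$ is carried by the quadrangle involution alone, with the octagonal set held fixed in both parts.

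Concretely, the paper starts from an octagonal set $(\KK,\sigma)$ as in Example $(10.12)$ of \cite{TW02}, passes to $\mathbb{E}=\KK(a,b)$ with $a,b$ algebraically independent over $\KK$, extends $\sigma$ by $a^{\sigma}=b$, $b^{\sigma}=a^2$, and takes the quadrangle $\mathcal{Q}_I(\mathbb{E},\mathbb{E}_0,\tau)$ where $\tau$ fixes $\KK$ pointwise and swaps $a$ and $b$. The computed $U_{\epsilon'}$-coefficient is then the quadrangle trace form $\sum_i (uu_i)^{\tau}k_i + k_i^{\tau}uu_i$. In characteristic $2$ this vanishes identically precisely when $\tau=\id$ (the paper's part $(a)$), while for the nontrivial $\tau$ one evaluates $[x_{\alpha}(0,1),[x_{\gamma}(1),x_{\delta}(1,a)]]=x_{\epsilon'}(a+b)\neq 1$ (part $(b)$). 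So the ``finer properties of the pair $(\KK,\sigma)$'' you propose to exploit are a red herring: no degeneration of the $\sigma$-twisted octagon form is involved, and since the octagon data are identical in both examples they cannot be what separates the two cases. What is actually needed---and what your proposal neither constructs nor identifies---is a field large enough to carry a Tits endomorphism together with a compatible nontrivial involution, plus the observation that setting that involution to the identity kills the obstruction in characteristic $2$. Without this input, neither the nonvanishing required for $(b)$ nor the vanishing required for $(a)$ can be established, so the proof does not go through as outlined.
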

\begin{proof}
	Let $(\KK, \sigma)$ be as in Example $(10.12)$ of \cite{TW02}. Let $\mathbb{E}$ be a field containing $\KK$ and let $a, b \in \mathbb{E}$ algebraically independent over $\KK$ such that $\mathbb{E} = \KK(a, b)$. We extend $\sigma$ to an endomorphism of $\mathbb{E}$ by setting $a^{\sigma} = b$ and $b^{\sigma} = a^2$. Then $(\mathbb{E}, \sigma)$ is an octagonal set as in Example $(10.12)$ of \cite{TW02}. For the quadrangle we take $\mathcal{Q}_I(\mathbb{E}, \mathbb{E}_0, \tau)$ (cf. Example $(16.2)$ of \cite{TW02}), where $\tau$ fixes $\KK$ pointwise and interchanges $a$ and $b$. We assume that the middle node of the diagram is parametrized by the field $\mathbb{E}$:
\[ \begin{tikzpicture}

\node[below] (a) at (0, -0.1) {$\mathbb{E}_0$};
\node[below] (b) at (2, -0.1) {$\mathbb{E}$};
\node[below] (c) at (4, -0.1) {$\mathbb{E}\times \mathbb{E}$};

\node[above] (d) at (1, 0) {$4$};
\node[above] (e) at (3, 0) {$8$};

\filldraw [black] 	(0, 0) circle (2pt)
(2, 0) circle (2pt)
(4, 0) circle (2pt);
\draw (0, 0) -- (2, 0) -- (4, 0);
\end{tikzpicture} \]

In the Coxeter complex of type $(2, 4, 8)$ there exists the following figure such that $(\epsilon', \delta) = (\epsilon', \gamma) = (\alpha, \gamma) = \emptyset, m_{\epsilon, \gamma} = 4$ and $\angle r_{\alpha} r_{\epsilon} = \angle r_{\epsilon} r_{\delta} = \angle r_{\delta} r_{\gamma} = \frac{1}{8}$:
\[ \begin{tikzpicture}[scale=0.5]

\draw (0  ,  0  ) to (10 ,  0  );
\draw (10 ,  0  ) to (10 , -0.3);
\draw (9.9,  0  ) to (9.9, -0.3);
\draw (9.8,  0  ) to (9.8, -0.3);
\draw (9.9, -0.3) node [below]{$\alpha$};

\draw (0, -1) to (6, 5);
\draw (0  , -1  ) to (0.2, -1.2);
\draw (0.1, -0.9) to (0.3, -1.1);
\draw (0.2, -0.8) to (0.4, -1  );
\draw (0.3, -1.1) node [below]{$\delta$};

\draw (5, 5) to (8, -1);
\draw (8  , -1  ) to (7.8, -1.1);
\draw (7.95, -0.9) to (7.75, -1);
\draw (7.9, -0.8) to (7.7, -0.9);
\draw (7.75, -1.1) node [below]{$\gamma$};

\draw (0  , -0.5 ) to (9, 4);
\draw (9  ,  4   ) to (9.1, 3.8);
\draw (8.9,  3.95) to (9  , 3.75);
\draw (8.8,  3.9 ) to (8.9, 3.7);
\draw (9, 3.75) node [below]{$\epsilon$};

\draw (0  , 2.6) to (10 , 2.6);
\draw (0  , 2.6) to (0  , 2.3);
\draw (0.1, 2.6) to (0.1, 2.3);
\draw (0.2, 2.6) to (0.2, 2.3);
\draw (0.1, 2.3) node [below]{$\epsilon'$};

\end{tikzpicture} \]

Thus $U_{\epsilon}$ is parametrized by the additive group of the field $\mathbb{E}$. Using induction and the commutator relations (cf. Chapter $16$ of \cite{TW02}) we obtain:
\[ \left[ x_{\alpha}(t, u), \prod_{i=1}^{n} [x_{\gamma}(k_i), x_{\delta}(t_i, u_i)] \right] = x_{\epsilon'}\left( \sum_{i=1}^{n} (uu_i)^{\tau}k_i + k_i^{\tau}uu_i \right) \]

This implies that $[x_{\alpha}(0, 1), [ x_{\gamma}(1), x_{\delta}( 1, a ) ] ] = x_{\epsilon'}( b + a ) \neq 1$ and hence there exists $2 \leq i \leq 7$ with $[x_{\alpha}(0, 1), x_i(k)] \neq 1$ for some $k \in \KK \cup \left( \KK \times \KK \right)$ (depending if $i$ is even or odd). This shows part $(b)$. For part $(a)$ we take the same example as above, except that the map $\tau = \id$ in the quadrangle. The existence of such RGD-systems is established by applying a Suzuki-Ree twist to a suitable split Kac-Moody group. The construction uses the same strategy as the construction given in \cite{He90}.
\end{proof}

\subsection*{The case $(2, 6, 8)$}

\begin{theorem}
	There exists an RGD-system of type $(W, S)$, in which Condition $\costar$ implies Condition $\nc$.
\end{theorem}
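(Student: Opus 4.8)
The plan is to reduce the existence statement to a single construction, leaning on the work already carried out in Theorem \ref{Mainresult}. By part (e) of that theorem, \emph{every} RGD-system of type $(2,6,8)$ whose Dynkin diagram is the one displayed there --- the hexagon bond $A - B$ drawn as the double line with the arrow pointing at the short node, together with the octagon bond $B - C$ labelled $8$ --- already satisfies the implication $\costar \Rightarrow \nc$. It therefore suffices to exhibit one RGD-system of type $(2,6,8)$ carrying exactly this diagram and satisfying Condition $\costar$; the theorem then follows by a direct appeal to Theorem \ref{Mainresult}(e).

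First I would fix the local rank-$2$ data. For the $m = 6$ residue I take a split hexagonal system, say the one of type $(1/\KK)$ from Example $(15.20)$ of \cite{TW02}, oriented so that, as in the diagram, $B$ is the long node and $A$ the short node. For the $m = 8$ residue I take an octagonal set $(\KK, \sigma)$ as in Example $(10.12)$ of \cite{TW02}, where $\sigma$ is a Tits endomorphism; in particular $\operatorname{char} \KK = 2$ and $x^{\sigma^2} = x^2$ for all $x \in \KK$. The $m = 2$ residue between $A$ and $C$ is a direct product and carries no extra structure. The one point that must be checked by hand is that the middle node $B$, which is the field $\KK$ in both the hexagon and the octagon, receives the long/short label compatible with the parametrisation that Theorem \ref{Mainresult}(e) presupposes.

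Assembling an actual rank-$3$ RGD-system out of these three prescribed residues is the heart of the matter, and it is also where the main obstacle lies: the label $m = 8$ is non-crystallographic, so the system cannot be obtained from an untwisted split Kac--Moody group. As in the $(2,4,8)$ case treated earlier, I would produce it by applying a Suzuki--Ree twist to a suitable split Kac--Moody group, following the strategy of \cite{He90}; the untwisted group furnishes the ambient rank-$3$ geometry and the twist by $\sigma$ collapses the appropriate residue into the octagon. The delicate step is the global consistency of the twist: one must verify that $\sigma$ propagates across the full rank-$3$ diagram in a way that is simultaneously compatible with the $G_2$ hexagon at the shared node $B$, so that the resulting datum satisfies all of the axioms (RGD0)--(RGD4) of Section \ref{sec:rgd}.

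It remains to secure Condition $\costar$, which is arranged by a suitable choice of the field. The forbidden quotients $B_2(2), G_2(2), G_2(3)$ and $^2 F_4(2)$ all live over $\FF_2$ or $\FF_3$; since our residues are a hexagon $G_2(\KK)$ and an octagon $^2 F_4(\KK)$ over a field $\KK$ of characteristic $2$ (there is no $m = 4$ residue, so $B_2(2)$ cannot occur, and no $\FF_3$ appears), it is enough to require $\vert \KK \vert \geq 4$, i.e. $\KK \neq \FF_2$. Then no rank-$2$ standard subgroup $X_{\alpha, \beta} / Z(X_{\alpha, \beta})$ lands in the exceptional list, so Condition $\costar$ holds, and Theorem \ref{Mainresult}(e) delivers Condition $\nc$, completing the argument.
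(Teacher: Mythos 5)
Your proposal is correct and takes essentially the same route as the paper: the paper's proof likewise consists of exhibiting an RGD-system carrying the Dynkin diagram of Theorem \ref{Mainresult}(e), whose existence is obtained by a Suzuki--Ree-type twist of a split Kac--Moody group along the lines of \cite{He90}, so that the implication follows directly from Theorem \ref{Mainresult}(e). Your explicit choice of local rank-$2$ data and your verification of Condition $\costar$ via $\vert \KK \vert \geq 4$ are elaborations that the paper leaves implicit, but they do not alter the argument.
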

\begin{proof}
	An RGD-system with diagram \begin{tikzpicture}
	\node[dynkinnode] (A) at (0,0) {};
	\node[dynkinnode] (B) at (1,0) {};
	\node[dynkinnode] (C) at (2,0) {};
	
	\node[above] (bc) at (1.5, 0) {$8$};
	
	\draw (A) -- (B) -- (C);
	\doubleline{B}{A}
	\end{tikzpicture} is an example. The existence of such an RGD-system can be proved along the lines in \cite{He90}.
\end{proof}

\begin{theorem}
	There exists an RGD-system of type $(2, 6, 8)$, in which Condition $\costar$ does not imply Condition $\nc$.
\end{theorem}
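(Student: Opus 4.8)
The plan is to mirror the construction used for the non-implication parts of the cases $(2,4,6)$, $(2,4,8)$ and $(2,6,6)$: produce an explicit RGD-system of type $(2,6,8)$ whose structure constants force a nested pair of root groups to fail to commute, while Condition $\costar$ still holds. First I would fix the Dynkin diagram of type $(2,6,8)$ with the arrow on the $6$-edge oriented opposite to the one appearing in Theorem~\ref{Mainresult}(e), so that the node parametrizing the root group $U_{\alpha}$ below is a module rather than the field; this is precisely the choice that, in the $(2,6,6)$ non-implication proof, produced the obstruction $[U_{\epsilon}, U_{\gamma}] \neq 1$ that is absent in the implication case. Since $m_{st} = 8$ precludes an integral generalized Cartan matrix, the underlying group cannot be a split Kac--Moody group; instead I would obtain the example by applying a Suzuki--Ree type twist to a suitable split Kac--Moody group along the lines of \cite{He90}, exactly as in the $(2,4,8)$ case. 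Concretely this fixes an octagonal set $(\KK, \sigma)$ with $\sigma^2$ equal to the squaring map (so that the characteristic of $\KK$ equals $2$) for the $8$-edge, together with a compatible hexagonal system for the $6$-edge.

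Next I would locate, in the Coxeter complex of type $(2,6,8)$, a figure consisting of roots $\alpha, \delta, \gamma, \epsilon, \epsilon'$ with $(\epsilon', \delta) = (\epsilon', \gamma) = (\alpha, \gamma) = \emptyset$, with $\{ \delta, \gamma \}$ a root basis of a spherical rank~$2$ residue, and with $o(r_{\alpha} r_{\epsilon'}) = \infty$ so that $\{ \alpha, \epsilon' \}$ is a nested pair. The existence and the precise angle data of such a figure are read off from the classification in \cite{Fe98} together with Lemma~\ref{angle1over2} (whose part~(b) applies since $(2,6,8)$ is not of type $(2,3,8)$). By Lemma~\ref{costarsimplerggenerate} the root group indexed by the nested root is contained in $[U_{\delta}, U_{\gamma}]$, so it suffices to evaluate $[x_{\alpha}(\cdot),\, \prod_i [x_{\delta}(\cdot), x_{\gamma}(\cdot)]]$ and show it is nontrivial.

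The computational core is to carry out this evaluation using the hexagonal relations of Example~$(16.8)$ and the octagonal relations of Example~$(16.9)$ of \cite{TW02}, combined via induction and the three subgroup lemma, and to show that the result lands in $U_{\epsilon'}$ with a coefficient that is a nonzero polynomial expression in the chosen parameters. As in the $(2,4,8)$ case, I expect the twist $\tau$ on the octagonal side (the analogue of the surviving constant $9$ in the $(2,6,6)$ computation) to keep the output nonzero, so that a specific choice of the form $[x_{\alpha}(v), [x_{\delta}(w), x_{\gamma}(k)]] = x_{\epsilon'}(\ast) \neq 1$ holds for suitable parameters. This nontriviality forces some root group in the residue with basis $\{ \delta, \gamma \}$ not to commute with $U_{\alpha}$; since the output lies in $U_{\epsilon'}$ and $\{ \alpha, \epsilon' \}$ is nested, Condition~$\nc$ fails, while Condition $\costar$ is guaranteed by the construction, so part of the statement follows and the existence is supplied by the Hée-type twisted group.

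The main obstacle will be the explicit commutator computation, because the $(2,6,8)$ case genuinely mixes the hexagonal and octagonal parameter systems over a field equipped with a Tits endomorphism in characteristic~$2$; one must track carefully which node is parametrized by the field and which by the module (using Lemma~\ref{hexagonproperties} and Lemma~\ref{octagonproperties} to identify the abelian root groups and the vanishing commutators), and verify that the surviving coefficient does not vanish identically for any admissible twist. A secondary difficulty is confirming that the geometric figure has exactly the stated incidences, for which Lemma~\ref{angle1over2} together with the classification in \cite{Fe98} will be the essential tools.
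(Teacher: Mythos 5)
Your proposal is correct and takes essentially the same approach as the paper: the same oppositely-oriented $(2,6,8)$ Dynkin diagram, the same five-root configuration $\alpha, \delta, \gamma, \epsilon, \epsilon'$ with $\{\alpha, \epsilon'\}$ nested, the same evaluation of $\left[x_{\alpha}(t,u), \prod_{i}[x_{\gamma}(k_i), x_{\delta}(t_i,u_i)]\right]$ landing nontrivially in $U_{\epsilon'}$, and the same existence argument via a Suzuki--Ree-type twist of a split Kac--Moody group. One small correction to your expectation: in the paper's computation the surviving coefficient is simply $u\sum_{i} u_i k_i$, so $[x_{\alpha}(0,1), [x_{\gamma}(1), x_{\delta}(0,1)]] = x_{\epsilon'}(1) \neq 1$ holds outright, with no twist-dependent nonvanishing issue analogous to the constant $9$ of the $(2,6,6)$ case.
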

\begin{proof}
	Let $(\KK, \sigma)$ be as in Example $(10.12)$ of \cite{TW02}. Let $\mathbb{E}$ be a field containing $\KK$ and let $a, b \in \mathbb{E}$ algebraically independent over $\KK$ such that $\mathbb{E} = \KK(a, b)$. We extend $\sigma$ to an endomorphism of $\mathbb{E}$ by setting $a^{\sigma} = b$ and $b^{\sigma} = a^2$. Then $(\mathbb{E}, \sigma)$ is an octagonal set as in Example $(10.12)$ of \cite{TW02}. For the hexagon we take the hexagonal system $(\mathbb{E}/\mathbb{E})^{\circ}$ of type $(1/\mathbb{E})$ (cf. Example $(15.20)$). We assume that \begin{tikzpicture}
	\node[dynkinnode] (A) at (0,0) {};
	\node[dynkinnode] (B) at (1,0) {};
	\node[dynkinnode] (C) at (2,0) {};
	
	\node[above] (bc) at (1.5, 0) {$8$};
	
	\draw (A) -- (B) -- (C);
	\doubleline{A}{B}
	\end{tikzpicture} is the diagram of the RGD-system. This means that the right node is parametrized by $\mathbb{E} \times \mathbb{E}$ and the other two nodes are parametrized by $\mathbb{E}$.

In the Coxeter complex of type $(2, 6, 8)$ there exists the following figure such that $(\epsilon', \delta) = (\epsilon', \gamma) = (\alpha, \gamma) = \emptyset, m_{\epsilon, \gamma} = 6, \angle r_{\alpha} r_{\epsilon} = \angle r_{\epsilon} r_{\delta} = \angle r_{\delta} r_{\gamma} = \frac{1}{8}$ and $o(r_{\alpha} r_{\gamma}) = o(r_{\alpha} r_{\epsilon'}) = \infty$:
\[ \begin{tikzpicture}[scale=0.5]

\draw (0  ,  0  ) to (10 ,  0  );
\draw (10 ,  0  ) to (10 , -0.3);
\draw (9.9,  0  ) to (9.9, -0.3);
\draw (9.8,  0  ) to (9.8, -0.3);
\draw (9.9, -0.3) node [below]{$\alpha$};

\draw (0, -1) to (5, 4);
\draw (0  , -1  ) to (0.2, -1.2);
\draw (0.1, -0.9) to (0.3, -1.1);
\draw (0.2, -0.8) to (0.4, -1  );
\draw (0.3, -1.1) node [below]{$\delta$};

\draw (3.4, 4) to (9, 1.2);
\draw (9, 1.2) to (8.9, 1.0);
\draw (8.9, 1.25) to (8.8, 1.05);
\draw (8.8, 1.3) to (8.7, 1.1);
\draw (8.8, 1.2) node [right]{$\gamma$};

\draw (0  , -0.5 ) to (9, 4);
\draw (9  ,  4   ) to (9.1, 3.8);
\draw (8.9,  3.95) to (9  , 3.75);
\draw (8.8,  3.9 ) to (8.9, 3.7);
\draw (9, 3.75) node [below]{$\epsilon$};

\draw (0  , 2.6) to (10 , 2.6);
\draw (0  , 2.6) to (0  , 2.3);
\draw (0.1, 2.6) to (0.1, 2.3);
\draw (0.2, 2.6) to (0.2, 2.3);
\draw (0.1, 2.3) node [below]{$\epsilon'$};

\end{tikzpicture} \]

Thus $U_{\epsilon}$ is parametrized by the additive group of the field $\mathbb{E}$. Using induction and the commutator relations (cf. Chapter $16$ of \cite{TW02}) we obtain:
\[ \left[ x_{\alpha}(t, u), \prod_{i=1}^{n} [x_{\gamma}(k_i), x_{\delta}(t_i, u_i)] \right] = x_{\epsilon'}\left( \sum_{i=1}^{n} T(uu_i, k_i) \right) = x_{\epsilon'}\left( u\sum_{i=1}^{n} u_ik_i \right) \]

This implies that $[x_{\alpha}(0, 1), [ x_{\gamma}(1), x_{\delta}( 0, 1 ) ]] = x_{\epsilon'}( 1 ) \neq 1$. This shows that such an RGD-system does not satisfy Condition $\nc$. The existence of such RGD-systems is established by applying a Suzuki-Ree twist to a suitable split Kac-Moody group. The construction uses the same strategy as the construction given in \cite{He90}.
\end{proof}

\appendix
\renewcommand{\thesection}{Appendix \Alph{section}}

\renewcommand\thecountercheck{(\Alph{section}.\arabic{countercheck})}

\section{Computations in an RGD-system of type $G_2$}

Let $\mathcal{D} = (G, (U_{\alpha})_{\alpha \in \Phi})$ be an RGD-system of type $G_2$. If $\mathcal{D}$ satisfies Condition $\costar$ we can write every element of $U_{\gamma}$ as a product of elements $[u_{\alpha}, u_{\beta}]$, where $u_{\alpha} \in U_{\alpha}, u_{\beta} \in U_{\beta}, \Pi = \{ \alpha, \beta \}$ and $\gamma \in (\alpha, \beta)$. In this section we will state concrete elements $u_{\alpha}, u_{\beta}$ with the required property for some $u_{\gamma} \in U_{\gamma}$. We remark that we do not assume Condition $\costar$ in this section.

\begin{lemma}
	Using the notations of \cite{TW02} we obtain the following commutator relations:
	\begin{align*}
	[x_1(a_1), x_6(t_1)] [x_1(a_2), x_6(t_2)] = &x_2(-t_1N(a_1)-t_2N(a_2)) \\
	\cdot &x_3(t_1a_1^{\#} + t_2a_2^{\#}) \\
	\cdot &x_4(t_1^2N(a_1) + t_2^2N(a_2) + T(t_1a_1, t_2a_2^{\#})) \\
	\cdot &x_5(-t_1a_1 - t_2a_2)
	\end{align*}
\end{lemma}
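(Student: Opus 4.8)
The strategy is to reduce the identity to the fundamental commutator relations of Example $(16.8)$ in \cite{TW02} together with a careful reordering of the resulting word. First I would record the underlying single commutator relation read off from $(16.8)$, namely
\[ [x_1(a), x_6(t)] = x_2(-tN(a))\, x_3(ta^{\#})\, x_4(t^2 N(a))\, x_5(-ta). \]
Applying this to each of the two commutators, the left-hand side of the assertion becomes
\[ x_2(p_1)\, x_3(q_1)\, x_4(r_1)\, x_5(s_1)\, x_2(p_2)\, x_3(q_2)\, x_4(r_2)\, x_5(s_2), \]
where $p_i = -t_i N(a_i)$, $q_i = t_i a_i^{\#}$, $r_i = t_i^2 N(a_i)$ and $s_i = -t_i a_i$, and the task is to bring this word into the normal form $x_2(\ast)\,x_3(\ast)\,x_4(\ast)\,x_5(\ast)$.

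The heart of the computation is to move the three factors $x_2(p_2), x_3(q_2), x_4(r_2)$ of the second block leftwards past the factors $x_3(q_1), x_4(r_1), x_5(s_1)$ of the first block, each swap producing a correction supported on the root groups whose roots are positive combinations of the two roots involved, as prescribed by $(16.8)$. The key point is that among the inner root groups $U_2, \dots, U_5$ the only pair with a nontrivial commutator is $\{U_3, U_5\}$, with $[U_3, U_5] \subseteq U_4$: writing the six positive roots $\rho_1, \dots, \rho_6$ of $G_2$ in convex order (so that $U_i$ is the root group of $\rho_i$), one checks that $\rho_3 + \rho_5 = \rho_4$ is the unique inner relation, while each of $[U_2, U_3], [U_2, U_4], [U_2, U_5], [U_3, U_4], [U_4, U_5]$ is trivial. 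Consequently $x_2(p_2)$ and $x_4(r_2)$ pass freely to their intended slots, and the sole correction arises from commuting $x_3(q_2)$ past $x_5(s_1)$, which by $(16.8)$ lands in $U_4$ as $x_4$ of a trace expression in $s_1$ and $q_2$. Since this correction lies in $U_4$ and $[U_4, U_j] = 1$ for every inner $j$, it commutes with all remaining factors and produces no further corrections.

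It then remains to collect the surviving contributions slot by slot, using that every root group is abelian and additively parametrized (Lemma \ref{hexagonproperties}). The $U_2$-slot gives $p_1 + p_2 = -t_1 N(a_1) - t_2 N(a_2)$, the $U_3$-slot gives $q_1 + q_2 = t_1 a_1^{\#} + t_2 a_2^{\#}$, and the $U_5$-slot gives $s_1 + s_2 = -t_1 a_1 - t_2 a_2$, while the $U_4$-slot gives $r_1 + r_2$ augmented by the cross term from $[x_5(s_1), x_3(q_2)]$; substituting $s_1 = -t_1 a_1$ and $q_2 = t_2 a_2^{\#}$ and invoking the bilinearity of $T$ turns this into $t_1^2 N(a_1) + t_2^2 N(a_2) + T(t_1 a_1, t_2 a_2^{\#})$. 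This is exactly the asserted identity.

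I expect the main obstacle to be the sign bookkeeping: one must read off the precise structure constants of Example $(16.8)$ to confirm that the single surviving commutator $[x_5(s_1), x_3(q_2)]$ contributes $+T(t_1 a_1, t_2 a_2^{\#})$ rather than its negative or the term with arguments interchanged, and one must take the signs in the single commutator relation above consistently throughout. Everything else follows mechanically from the vanishing of the remaining inner commutators.
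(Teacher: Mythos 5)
Your proposal is correct and is exactly the ``elementary computation'' the paper invokes without giving details: expand each commutator via the defining relation of Example $(16.8)$ of \cite{TW02}, then merge the two blocks using that among $U_2,\dots,U_5$ only $[U_3,U_5]\subseteq U_4$ is nontrivial (indeed $\rho_3+\rho_5=\rho_4$ is the unique inner root relation), so the sole correction lands in the central slot. The sign bookkeeping you flag does work out consistently with the stated lemma: the inversion $[x_5(s_1),x_3(q_2)]=[x_3(q_2),x_5(s_1)]^{-1}$ and the sign in $s_1=-t_1a_1$ cancel, yielding the cross term $+T(t_1a_1,t_2a_2^{\#})$ by bilinearity and symmetry of $T$.
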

\begin{proof}
	This is an elementary computation.
\end{proof}

\begin{remark}\label{hexagoncommutatorrelations}
	Using the notations of \cite{TW02} and the previous lemma we obtain the following commutator relations:
	\begin{enumerate}[label=(\alph*)]
		\item $[x_1(-k \cdot 1_V), x_6(k^{-1})] [x_1(-k \cdot 1_V), x_6(-k^{-1})] = x_4(k)$.
		
		\item $[x_1(v), x_6(k)] [x_1(-v), x_6(k)] = x_3(2kv^{\#}) x_4(3k^2 N(v))$.
		
		\item $[x_1(v), x_6(k^3)] [x_1(kv), x_6(-1)] = x_3(\left( k^3 - k^2 \right)v^{\#}) x_4(N(v) \left( k^6 + k^3 - 3k^5 \right)) x_5((k-k^3)v)$.
	\end{enumerate}
	If the characteristic of the field is different from $2$ and $v\neq 0$, we can choose $v' := v^{\#} \neq 0$ and $k' := (2N(v^{\#}))^{-1}$ and obtain the following:
	\[ [x_1(v'), x_6(k')] [x_1(-v'), x_6(k')] = x_3(v) x_4(3(k')^2 N(v')) \]
\end{remark}

\section{Computations in an RGD-system of type $I_2(8)$}

In this section we let $\mathcal{D} = (G, (U_{\alpha})_{\alpha \in \Phi})$ be an RGD-system of type $I_2(8)$.

\begin{lemma}\label{proofofoctagoncommutatorrelations}
	Using the notations of \cite{TW02} we obtain the following commutator relations:
	\begin{enumerate}[label=(\alph*)]
		\item The general commutator of two basis roots is given by
		\begin{align*}
			\everyafterautobreak{\cdot}
			\begin{autobreak}
			[x_1(t), x_8(u_1, u_2)] = 
			x_2( t^{\sigma +1}u_1 + t^{\sigma+1} u_2^{\sigma+1}, tu_2 ) 
			x_3( t^{\sigma+1} u_2^{\sigma+2} + t^{\sigma+1} u_1 u_2 + t^{\sigma+1} u_1^{\sigma} ) 
			x_4( t^{\sigma+2} u_1^{\sigma} u_2^{\sigma+1} + t^{\sigma +2} u_1^2 u_2 + t^{\sigma +2} u_2^{2\sigma +3}, t^{\sigma} u_1 ) 
			x_5( t^{\sigma+1} u_1^{\sigma} u_2^{\sigma} + t^{\sigma+1} u_2^{2\sigma+2} + t^{\sigma+1} u_1^2 ) 
			x_6( t^{\sigma+1} u_1^2 u_2 + t^{\sigma+1} u_1^{\sigma} u_2^{\sigma+1} + t^{\sigma+1} u_1^{\sigma+1}, tu_1 + tu_2^{\sigma+1} ) 
			x_7( tu_1^{\sigma} + tu_1 u_2 + tu_2^{\sigma+2} )
			\end{autobreak}
		\end{align*}
				
		\item The multiplication of two general commutators of basis roots is given by
		\begin{align*}
			\begin{autobreak}
			[x_1(t_1), x_8(u_1, u_2)] 
			[x_1(t_2), x_8(v_1, v_2)] 
			= x_2( t_1^{\sigma+1} \left( u_1 + u_2^{\sigma +1} \right)
			+ t_2^{\sigma+1} \left( v_1 + v_2^{\sigma+1} \right) 
			+ t_1^{\sigma} t_2 u_2^{\sigma} v_2, t_1u_2 + t_2v_2 )
			
			\cdot x_3( t_1^{\sigma+1} \left( u_2^{\sigma +2} + u_1 u_2 + u_1^{\sigma} \right) 
			+ t_2^{\sigma+1} \left( v_2^{\sigma+2} + v_1 v_2 + v_1^{\sigma} \right)
			+ t_1^{\sigma} t_2 u_1 v_2 )
			
			\cdot x_4( t_1^{\sigma+2} \left( u_1^{\sigma} u_2^{\sigma+1} + u_1^2 u_2 + u_2^{2\sigma +3} \right) 
			+ t_1 \left( u_1^{\sigma} + u_1u_2 + u_2^{\sigma+2} \right) \left( v_1 + v_2^{\sigma+1} \right) t_2^{\sigma+1}
			+ t_1^{\sigma+1} \left( u_1^{\sigma} u_2^{\sigma} + u_2^{2\sigma +2} + u_1^2 \right) t_2v_2
			+ t_1 \left( u_1 + u_2^{\sigma+1} \right) \left( v_2^{\sigma+2} + v_1 v_2 + v_1^{\sigma} \right) t_2^{\sigma+1}
			+ t_2^{\sigma+2} \left( v_1^{\sigma} v_2^{\sigma+1} + v_1^2 v_2 + v_2^{2\sigma +3} \right) 
			+ t_1^2 t_2^{\sigma} u_1^{\sigma} v_1, t_1^{\sigma}u_1 + t_2^{\sigma} v_1 )
			
			\cdot x_5( t_1^{\sigma+1} \left( u_1^{\sigma} u_2^{\sigma} + u_2^{2\sigma+2} + u_1^2 \right) 
			+ t_1 \left( u_1^{\sigma} + u_1 u_2 + u_2^{\sigma+2} \right)t_2^{\sigma} v_2^{\sigma}
			+ t_1 \left( u_1 + u_2^{\sigma+1} \right) t_2^{\sigma}v_1 
			+ t_2^{\sigma+1} \left( v_1^{\sigma} v_2^{\sigma} + v_2^{2\sigma +2} + v_1^2 \right) )
			
			\cdot x_6( t_1^{\sigma+1} \left( u_1^2 u_2 + u_1^{\sigma} u_2^{\sigma+1} + u_1^{\sigma+1} \right) 
			+ t_1^{\sigma} \left( u_1^2 + u_1^{\sigma}u_2^{\sigma} + u_2^{2\sigma+2} \right) t_2v_2
			+ t_1 \left( u_1^{\sigma} + u_1u_2 + u_2^{\sigma+2} \right) t_2^{\sigma} v_1 
			+ t_2^{\sigma+1} \left( v_1^2 v_2 + v_1^{\sigma} v_2^{\sigma+1} + v_1^{\sigma+1} \right)
			+ t_1^{\sigma} t_2 \left( u_1^{\sigma} + u_2^{\sigma +2} \right) \left( v_1 + v_2^{\sigma +1} \right), t_1 \left( u_1 + u_2^{\sigma+1} \right) + t_2 \left( v_1 + v_2^{\sigma+1} \right) )
			
			\cdot x_7( t_1 \left( u_1^{\sigma} + u_1u_2 + u_2^{\sigma+2} \right) 
			+ t_2 \left( v_1^{\sigma} + v_1v_2 + v_2^{\sigma+2} \right) )
			\end{autobreak}
		\end{align*}
	\end{enumerate}
\end{lemma}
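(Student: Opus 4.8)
The plan is to carry out both computations inside the unipotent group $U_+ = \langle U_1, \ldots, U_8 \rangle$ of the octagon, using nothing beyond the elementary commutator relations listed in Example $(16.9)$ of \cite{TW02} and the group laws on the individual root groups. The structural facts I would rely on are that $U_+$ is nilpotent with the eight root groups linearly ordered, so that $[U_i, U_j] \subseteq \langle U_k \mid i < k < j \rangle$ for $i < j$ (in particular $[U_i, U_{i+1}] = 1$), and that every element of $U_+$ has a unique normal form as an ordered product $x_1(\cdot) x_2(\cdot) \cdots x_8(\cdot)$. Throughout I use the standard commutator identities
\[ [a, bc] = [a, c]\,[a, b]^c, \qquad [ab, c] = [a, c]^b\,[b, c], \]
together with the defining identities of the octagonal system, namely characteristic $2$ and the endomorphism $\sigma$ with $t^{\sigma^2} = t^2$.

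For part $(a)$ the relations of $(16.9)$ give the commutators between the root groups of the octagon in terms of the operations of the octagonal system. I would derive $[x_1(t), x_8(u_1, u_2)]$ by repeatedly applying the two displayed identities to reduce to the elementary relations of $(16.9)$, and then reordering the resulting factors into the normal form $x_2(\cdot) \cdots x_7(\cdot)$ by increasing index. Each reordering of a factor past one of lower index produces corrections living in the intervening groups, and these are tracked until the process terminates by nilpotency. The entire content is to check that, after simplification using characteristic $2$ and $t^{\sigma^2} = t^2$, the coefficient collected in each slot $x_2, \ldots, x_7$ equals the stated expression, with its exponents $\sigma + 1$, $\sigma + 2$, $2\sigma + 2$, $2\sigma + 3$.

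For part $(b)$ I would start from the two normal forms produced in $(a)$, namely $x_2(\cdot) \cdots x_7(\cdot)$ for $[x_1(t_1), x_8(u_1, u_2)]$ and the analogous one for $[x_1(t_2), x_8(v_1, v_2)]$, and multiply them. To restore the normal form I would move all $x_2$-factors to the left, then all $x_3$-factors, and so on. Combining two factors in the same two-parameter group $U_2, U_4$ or $U_6$ is governed by the twisted group law on that group, which is exactly what produces the mixed terms such as $t_1^{\sigma} t_2 u_2^{\sigma} v_2$ in the $x_2$-slot and the several cross terms in the $x_4$- and $x_6$-slots; commuting a factor past one of lower index spawns further corrections in the higher-index groups via the relations of $(16.9)$. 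Assembling all contributions slot by slot yields the asserted formula.

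The step I expect to be the genuine obstacle is purely organizational rather than conceptual: because of characteristic $2$ and the endomorphism $\sigma$ with $\sigma^2$ equal to the Frobenius, the $\sigma$-laden exponents proliferate, and a single dropped or miscombined term invalidates the result. The crux is therefore the disciplined bookkeeping needed to verify that, after all reorderings and all applications of the twisted additions, the coefficient collected in each of $x_2, \ldots, x_7$ coincides with the one stated. This verification is routine but long, which is precisely why it is relegated to this appendix.
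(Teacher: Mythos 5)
Your proposal identifies the right general strategy --- a direct computation in $U_+$ using the elementary relations of Example $(16.9)$ of \cite{TW02}, the identities $[a,bc]=[a,c][a,b]^c$ and $[ab,c]=[a,c]^b[b,c]$, and reordering into the normal form $x_2(\cdot)\cdots x_7(\cdot)$ --- and this is indeed how the paper proceeds. But the proposal stops at the strategy: for a lemma whose entire content is a pair of explicit closed formulas, the proof \emph{is} the verification of those formulas, and you perform none of it. You explicitly defer ``the disciplined bookkeeping needed to verify that \ldots the coefficient collected in each of $x_2,\ldots,x_7$ coincides with the one stated,'' which is precisely the burden of proof. As it stands, nothing in the proposal certifies a single one of the stated coefficients, so the statement is not proved.

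Beyond that, the plan omits the concrete devices without which the computation cannot even be started. First, the relations you need are not literally the ones displayed in $(16.9)$: the paper must first derive a list of auxiliary commutator relations involving the second-coordinate elements $y_2, y_4, y_6, y_8$ of the two-parameter root groups (e.g.\ $[x_7(t),y_2(u)] = x_5(tu^{\sigma})x_6(t^{\sigma}u)$, $[x_3(t),y_8(u)^{-1}] = x_4(t^{\sigma}u)x_5(tu^{\sigma})x_6(tu^{\sigma+1})$, and the expansion of $[x_1(t),x_8(u)]$ itself), none of which are stated in \cite{TW02}. Second, the computation of part $(a)$ hinges on decomposing the two-parameter element as $x_8(u_1,u_2)=x_8(u_1)y_8(u_2)=x_8(u_1+u_2^{\sigma+1})y_8(u_2)^{-1}$ and then applying $[a,bc]=[a,c][a,b]^c$, so that one must compute both $[x_1(t),y_8(u_2)^{-1}]$ and the \emph{conjugate} $[x_1(t),x_8(u)]^{y_8(u_2)^{-1}}$; your plan treats $x_8(u_1,u_2)$ as an unstructured argument and never mentions this splitting or the conjugation step. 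Third, part $(b)$ in the paper is organized through explicit ``absorption'' rules computing $g\,x_i(z)$ and $g\,y_i(z)$ for a generic normal form $g = x_2(a_2)y_2(b_2)x_3(a_3)\cdots x_7(a_7)$; this is what makes the multiplication of the two normal forms mechanical. Your appeal to ``the twisted group law'' and ``corrections via nilpotency'' gestures at this but supplies neither the rules nor any of the resulting cross terms (such as $t_1^{\sigma}t_2u_2^{\sigma}v_2$ in the $x_2$-slot), so the asserted formula in $(b)$ is likewise unverified.
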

\begin{proof}
	At first we state all non-trivial commutator relations we need which are not stated in \cite{TW02}:
	\begin{align*}
	[x_1(t), x_8(u)] &= x_2(t^{\sigma +1}u) x_3(t^{\sigma +1} u^{\sigma}) y_4(t^{\sigma}u) x_5( t^{\sigma +1}u^2 ) y_6(tu)^{-1} x_7( tu^{\sigma} ) \\
	&= x_2(t^{\sigma +1}u) x_3(t^{\sigma +1} u^{\sigma}) y_4(t^{\sigma}u) x_5( t^{\sigma +1}u^2 ) x_6(t^{\sigma +1} u^{\sigma +1})y_6(tu) x_7( tu^{\sigma} ) \\
	[x_7(t), x_2(u)] &= x_4(tu)	\\
	[x_5(t), y_2(u)] &= x_4(tu) \\
	[x_1(t), y_6(u)] &= [x_1(t), x_6(u^{\sigma +1})y_6(u)^{-1}] = x_2(t^{\sigma}u) x_3(tu^{\sigma})x_4(tu^{\sigma +1}) [x_1(t), x_6(u^{\sigma +1})]^{y_6(u)^{-1}} \\
	&= x_2(t^{\sigma}u) x_3(tu^{\sigma})x_4(tu^{\sigma +1}) x_4(tu^{\sigma +1}) = x_2(t^{\sigma}u) x_3(tu^{\sigma})\\	
	[x_7(t), y_2(u)] &= x_6(t^{\sigma}u) x_5(tu^{\sigma}) = x_5(tu^{\sigma}) x_6(t^{\sigma}u) \\
	[y_6(u), x_3(t)] &= x_4(tu)^{-1} = x_4(tu) \\
	[y_6(u), y_4(t)] &= x_5(tu)^{-1} = x_5(tu) \\
	[x_7(t), y_4(u)] &= x_6(tu) \\
	[y_6(t)^{-1}, y_8(u)^{-1}] &= [y_6(t), y_8(u)] = x_7(tu) \\
	[x_5(t), y_8(u)^{-1}] &= [x_5(t), y_8(u)] = x_6(tu) \\
	[x_3(t), y_8(u)^{-1}] &= x_4(t^{\sigma}u) x_5(tu^{\sigma}) x_6(tu^{\sigma+1})
	\end{align*}
	We put $g := x_2(a_2) y_2(b_2) x_3(a_3) x_4(a_4) y_4(b_4) x_5(a_5) x_6(a_6) y_6(b_6) x_7(a_7)$. Using the commutator relations in \cite{TW02} we obtain the following:
	\begin{align*}
	g x_2(z) &= x_2(a_2 + z) y_2(b_2) x_3(a_3) x_4(a_4) y_4(b_4) x_5(a_5) x_6(a_6) y_6(b_6) x_7(a_7) [x_7(a_7), x_2(z)] \\
	&= x_2(a_2 + z) y_2(b_2) x_3(a_3) x_4(a_4) y_4(b_4) x_5(a_5) x_6(a_6) y_6(b_6) x_7(a_7) x_4(a_7 z) \\
	&= x_2(a_2 + z) y_2(b_2) x_3(a_3) x_4(a_4 + a_7 z) y_4(b_4) x_5(a_5) x_6(a_6) y_6(b_6) x_7(a_7) \\
	g y_2(z) &= x_2(a_2) y_2(b_2)y_2(z) x_3(a_3)x_4(a_4) y_4(b_4) [y_4(b_4), y_2(z)] x_5(a_5) [x_5(a_5), y_2(z)] x_6(a_6) y_6(b_6) \\
	&\hspace{0.5cm} \cdot x_7(a_7) [x_7(a_7), y_2(z)] \\
	&= x_2(a_2 + b_2^{\sigma}z) y_2(b_2 + z) x_3(a_3)x_4(a_4) y_4(b_4) x_3(b_4 z)^{-1} x_5(a_5) x_4(a_5 z) x_6(a_6) y_6(b_6) \\
	&\hspace{0.5cm} \cdot x_7(a_7) x_5(a_7 z^{\sigma}) x_6(a_7^{\sigma} z) \\
	&= x_2(a_2 + b_2^{\sigma}z) y_2(b_2 + z) x_3(a_3 + b_4 z) x_4(a_4 + a_5 z) y_4(b_4) x_5(a_5 + a_7 z^{\sigma}) x_6(a_6 + a_7^{\sigma} z) y_6(b_6) x_7(a_7) \\
	g x_3(z) &= x_2(a_2) y_2(b_2) x_3(a_3 + z) x_4(a_4) y_4(b_4) x_5(a_5) x_6(a_6) y_6(b_6) [y_6(b_6), x_3(z)] x_7(a_7) \\
	&= x_2(a_2) y_2(b_2) x_3(a_3 + z) x_4(a_4) y_4(b_4) x_5(a_5) x_6(a_6) y_6(b_6) x_4(b_6 z) x_7(a_7) \\
	&= x_2(a_2) y_2(b_2) x_3(a_3 + z) x_4(a_4 + b_6 z) y_4(b_4) x_5(a_5) x_6(a_6) y_6(b_6) x_7(a_7) \\
	g x_4(z) &= x_2(a_2) y_2(b_2) x_3(a_3) x_4(a_4 + z) y_4(b_4) x_5(a_5) x_6(a_6) y_6(b_6) x_7(a_7) \\
	g y_4(z) &= x_2(a_2) y_2(b_2) x_3(a_3) x_4(a_4) y_4(b_4) y_4(z) x_5(a_5) x_6(a_6) y_6(b_6) [y_6(b_6), y_4(z)] x_7(a_7) [x_7(a_7), y_4(z)] \\
	&= x_2(a_2) y_2(b_2) x_3(a_3) x_4(a_4 + b_4^{\sigma}z) y_4(b_4+z) x_5(a_5) x_6(a_6) y_6(b_6) x_5(b_6 z) x_7(a_7) x_6(a_7 z) \\
	&=  x_2(a_2) y_2(b_2) x_3(a_3) x_4(a_4 + b_4^{\sigma}z) y_4(b_4+z) x_5(a_5 + b_6 z) x_6(a_6 + a_7 z) y_6(b_6) x_7(a_7) \\
	g x_5(z) &= x_2(a_2) y_2(b_2) x_3(a_3) x_4(a_4) y_4(b_4) x_5(a_5 + z) x_6(a_6) y_6(b_6) x_7(a_7) \\
	g x_6(z) &= x_2(a_2) y_2(b_2) x_3(a_3) x_4(a_4) y_4(b_4) x_5(a_5) x_6(a_6 + z) y_6(b_6) x_7(a_7) \\
	gy_6(z) &= x_2(a_2) y_2(b_2) x_3(a_3) x_4(a_4) y_4(b_4) x_5(a_5) x_6(a_6) y_6(b_6)y_6(z) x_7(a_7) \\
	&= x_2(a_2) y_2(b_2) x_3(a_3) x_4(a_4) y_4(b_4) x_5(a_5) x_6(a_6 + b_6^{\sigma}z) y_6(b_6 + z) x_7(a_7) \\
	g x_7(z) &= x_2(a_2) y_2(b_2) x_3(a_3) x_4(a_4) y_4(b_4) x_5(a_5) x_6(a_6) y_6(b_6) x_7(a_7 + z)
	\end{align*}
	
	The rest of the proof is applying the previous relations. We let $u := u_1 + u_2^{\sigma +1}$.
	\begin{align*}
	\begin{autobreak}
		[x_1(t), x_8(u)]^{y_8(u_2)^{-1}} 
	= y_8(u_2) x_2(t^{\sigma +1} u) x_3(t^{\sigma +1} u^{\sigma}) y_4(t^{\sigma} u) x_5(t^{\sigma +1} u^2) y_6(tu)^{-1} x_7(tu^{\sigma}) y_8(u_2)^{-1} 
	= y_8(u_2) x_2(t^{\sigma +1} u) x_3(t^{\sigma +1} u^{\sigma}) y_4(t^{\sigma} u)
		y_8(u_2)^{-1} x_5(t^{\sigma +1} u^2) [x_5(t^{\sigma +1} u^2), y_8(u_2)^{-1}] y_6(tu)^{-1} [y_6(tu)^{-1}, y_8(u_2)^{-1}] x_7(tu^{\sigma}) 
	= x_2(t^{\sigma +1} u) [x_2(t^{\sigma +1}u), y_8(u_2)^{-1}] x_3(t^{\sigma +1} u^{\sigma}) [x_3(t^{\sigma +1} u^{\sigma}), y_8(u_2)^{-1}] y_4(t^{\sigma} u)
		x_5(t^{\sigma +1} u^2) x_6(t^{\sigma +1}u^2 u_2) y_6(tu)^{-1} x_7(tuu_2) x_7(tu^{\sigma}) 
	= x_2(t^{\sigma +1} u) x_3(t^{\sigma+1}uu_2) x_4((t^{\sigma+1}u)^{\sigma} u_2^{\sigma+1}) x_6(t^{\sigma+1}u u_2^{\sigma +2}) 
		x_3(t^{\sigma +1} u^{\sigma}) x_4((t^{\sigma+1}u^{\sigma})^{\sigma} u_2) x_5(t^{\sigma+1}u^{\sigma} u_2^{\sigma}) x_6(t^{\sigma+1}u^{\sigma} u_2^{\sigma+1}) 
		y_4(t^{\sigma} u) x_5(t^{\sigma +1} u^2) x_6(t^{\sigma +1}u^2 u_2) y_6(tu)^{-1} x_7(tuu_2 + tu^{\sigma}) 
	= x_2(t^{\sigma +1} u) x_3(t^{\sigma+1}uu_2 + t^{\sigma +1} u^{\sigma}) x_4((t^{\sigma+1}u)^{\sigma} u_2^{\sigma+1} + (t^{\sigma+1}u^{\sigma})^{\sigma} u_2) y_4(t^{\sigma} u) 
		x_5(t^{\sigma+1}u^{\sigma} u_2^{\sigma} + t^{\sigma +1} u^2) x_6(t^{\sigma+1}u u_2^{\sigma +2} + t^{\sigma+1}u^{\sigma} u_2^{\sigma+1} + t^{\sigma +1}u^2 u_2 + t^{\sigma +1} u^{\sigma +1}) 
		y_6(tu) x_7(tuu_2 + tu^{\sigma}) 
	\end{autobreak}
	\end{align*}
	\begin{align*}
	\begin{autobreak}
		[x_1(t), x_8(u_1)y_8(u_2)] 
	= [x_1(t), x_8(u_1 + u_2^{\sigma +1}) y_8(u_2)^{-1}] 
	= [x_1(t), y_8(u_2)^{-1}] [x_1(t), x_8(u)]^{y_8(u_2)^{-1}}		
	= y_2(tu_2) x_3(t^{\sigma+1} u_2^{\sigma+2}) x_4(t^{\sigma +2} u_2^{2\sigma +3}) y_4(t^{\sigma} u_2^{\sigma +1}) 
		x_5(t^{\sigma +1} u_2^{2\sigma +2}) x_6(t^{\sigma +1} u_2^{2\sigma +3}) x_7(tu_2^{\sigma +2}) x_2(t^{\sigma+1} u) \cdots 
	= x_2(t^{\sigma+1} u) y_2(tu_2) x_3(t^{\sigma+1} u_2^{\sigma+2}) x_4(t^{\sigma +2} u_2^{2\sigma +3} + tu_2^{\sigma +2} t^{\sigma+1} u) y_4(t^{\sigma} u_2^{\sigma +1}) 
		x_5(t^{\sigma +1} u_2^{2\sigma +2}) x_6(t^{\sigma +1} u_2^{2\sigma +3}) x_7(tu_2^{\sigma +2}) x_3(t^{\sigma+1}uu_2 + t^{\sigma+1}u^{\sigma}) \cdots 
	= \cdots x_3(t^{\sigma+1} u_2^{\sigma+2} + t^{\sigma+1}uu_2 + t^{\sigma+1}u^{\sigma}) x_4(t^{\sigma +2} u_2^{2\sigma +3} + tu_2^{\sigma +2} t^{\sigma+1} u) y_4(t^{\sigma} u_2^{\sigma +1}) 
		x_5(t^{\sigma +1} u_2^{2\sigma +2}) x_6(t^{\sigma +1} u_2^{2\sigma +3}) x_7(tu_2^{\sigma +2}) x_4((t^{\sigma+1}u)^{\sigma} u_2^{\sigma+1} + (t^{\sigma+1}u^{\sigma})^{\sigma} u_2) \cdots 
	= \cdots x_4(t^{\sigma +2} u_2^{2\sigma +3} + tu_2^{\sigma +2} t^{\sigma+1} u + (t^{\sigma+1}u)^{\sigma} u_2^{\sigma+1} + (t^{\sigma+1}u^{\sigma})^{\sigma} u_2) y_4(t^{\sigma} u_2^{\sigma +1}) 
		x_5(t^{\sigma +1} u_2^{2\sigma +2}) x_6(t^{\sigma +1} u_2^{2\sigma +3}) x_7(tu_2^{\sigma +2}) y_4(t^{\sigma}u) \cdots 
	= \cdots x_4(t^{\sigma +2} u_2^{2\sigma +3} + tu_2^{\sigma +2} t^{\sigma+1} u + (t^{\sigma+1}u)^{\sigma} u_2^{\sigma+1} + (t^{\sigma+1}u^{\sigma})^{\sigma} u_2 + (t^{\sigma} u_2^{\sigma +1})^{\sigma} t^{\sigma}u)
		y_4(t^{\sigma} u_2^{\sigma +1} + t^{\sigma}u) x_5(t^{\sigma +1} u_2^{2\sigma +2}) x_6(t^{\sigma +1} u_2^{2\sigma +3} + tu_2^{\sigma+2}t^{\sigma}u) x_7(tu_2^{\sigma +2}) 
		x_5(t^{\sigma+1}u^{\sigma} u_2^{\sigma} + t^{\sigma +1} u^2) \cdots 
	= \cdots x_5(t^{\sigma +1} u_2^{2\sigma +2} + t^{\sigma+1}u^{\sigma} u_2^{\sigma} + t^{\sigma +1} u^2) x_6(t^{\sigma +1} u_2^{2\sigma +3} + tu_2^{\sigma+2}t^{\sigma}u) x_7(tu_2^{\sigma +2})
		 x_6(t^{\sigma+1}u u_2^{\sigma +2} + t^{\sigma+1}u^{\sigma} u_2^{\sigma+1} + t^{\sigma +1}u^2 u_2 + t^{\sigma +1} u^{\sigma +1}) \cdots 
	= \cdots x_6(t^{\sigma +1} u_2^{2\sigma +3} + tu_2^{\sigma+2} t^{\sigma}u + t^{\sigma+1}u u_2^{\sigma +2} + t^{\sigma+1}u^{\sigma} u_2^{\sigma+1} + t^{\sigma +1}u^2 u_2 + t^{\sigma +1} u^{\sigma +1})
		x_7(tu_2^{\sigma +2}) y_6(tu) \cdots
	= \cdots y_6(tu) x_7(tu_2^{\sigma +2}) x_7(tuu_2 + tu^{\sigma}) 
	= \cdots x_7( tu_2^{\sigma +2} + tuu_2 + tu^{\sigma} )
	\end{autobreak}	
	\end{align*}
	These relations yields us:
	\begin{align*}
	\begin{autobreak}
		[x_1(t), x_8(u_1, u_2)] 
		= [x_1(t), x_8(u_1)y_8(u_2)] 
		= x_2( t^{\sigma+1} u_1 + t^{\sigma+1} u_2^{\sigma+1} ) y_2(tu_2)		
		\qquad \cdot x_3( t^{\sigma+1} u_2^{\sigma +2} + t^{\sigma+1} u_1 u_2 + t^{\sigma+1} u_2^{\sigma+2} + t^{\sigma+1}u_1^{\sigma} + t^{\sigma+1} u_2^{\sigma+2} )		 
		\qquad \cdot x_4( t^{\sigma+2} u_2^{2\sigma+3} + t^{\sigma +2}u_1 u_2^{\sigma+2} + t^{\sigma+2} u_2^{2\sigma +3} + t^{\sigma+2}u_1^{\sigma}u_2^{\sigma+1} + t^{\sigma+2} u_2^{2\sigma +3}		 
		\qquad \qquad + t^{\sigma +2}u_1^2 u_2 + t^{\sigma+2} u_2^{2\sigma +3} + t^{\sigma+2}u_1 u_2^{\sigma +2} + t^{\sigma +2} u_2^{2\sigma +3} )
		\qquad \cdot y_4(t^{\sigma} u_2^{\sigma+1} + t^{\sigma} u_1 + t^{\sigma}u_2^{\sigma+1})
		\qquad \cdot x_5( t^{\sigma +1} u_2^{2\sigma +2} + t^{\sigma+1}u_1^{\sigma} u_2^{\sigma} + t^{\sigma+1} u_2^{2\sigma +2} + t^{\sigma +1} u_1^2 + t^{\sigma+1} u_2^{2\sigma +2} ) 
		\qquad \cdot x_6( t^{\sigma+1}u_2^{2\sigma +3} +
		t^{\sigma+1}u_1u_2^{\sigma+2} + t^{\sigma+1}u_2^{2\sigma+3}
		+ t^{\sigma+1}u_1u_2^{\sigma+2} + t^{\sigma+1}u_2^{2\sigma+3}
		\qquad \qquad + t^{\sigma+1} u_1^{\sigma} u_2^{\sigma+1} + t^{\sigma+1} u_2^{2\sigma +3} + t^{\sigma+1}u_1^2 u_2 + t^{\sigma+1} u_2^{2\sigma+3}
		\qquad \qquad + t^{\sigma+1}u_1^{\sigma+1} + t^{\sigma+1} u_2^{2\sigma+3} ) y_6(tu_1 + tu_2^{\sigma+1}) 
		\qquad \cdot x_7( tu_2^{\sigma +2} + tu_1u_2 + tu_2^{\sigma +2} + tu_1^{\sigma} + tu_2^{\sigma+2} ) 
		= x_2( t^{\sigma+1} u_1 + t^{\sigma+1} u_2^{\sigma+1}, tu_2 ) 
		x_3( t^{\sigma+1} u_1 u_2 + t^{\sigma+1}u_1^{\sigma} + t^{\sigma+1} u_2^{\sigma +2} )  
		\qquad \cdot x_4( t^{\sigma+2}u_1^{\sigma}u_2^{\sigma+1} + t^{\sigma+2} u_2^{2\sigma +3} + t^{\sigma +2}u_1^2 u_2, t^{\sigma} u_1 ) 
		\qquad \cdot x_5( t^{\sigma+1}u_1^{\sigma} u_2^{\sigma} + t^{\sigma +1} u_1^2 + t^{\sigma+1} u_2^{2\sigma +2} ) 
		\qquad \cdot x_6( t^{\sigma+1} u_1^{\sigma} u_2^{\sigma+1} + t^{\sigma+1} u_1^2 u_2 + t^{\sigma+1}u_1^{\sigma+1}, tu_1 + tu_2^{\sigma+1} ) 
		\qquad \cdot x_7( tu_1u_2 + tu_1^{\sigma} + tu_2^{\sigma +2} )
	\end{autobreak}
	\end{align*}
	This finishes part $(a)$. Now we will prove part $(b)$. We put $g := [ x_1(t_1), x_8(u_1, u_2) ] [x_1(t_2), x_8(v_1, v_2)]$. Then the following hold:
	\begin{align*}
	\begin{autobreak}
		g 
		= x_2( t_1^{\sigma+1} u_1 + t_1^{\sigma +1} u_2^{\sigma+1} + t_2^{\sigma+1} v_1 + t_2^{\sigma +1} v_2^{\sigma+1} + t_1^{\sigma}u_2^{\sigma} t_2 v_2 )
		y_2(t_1 u_2 + t_2v_2)
		x_3( t_1^{\sigma+1} u_1 u_2 + t_1^{\sigma+1}u_1^{\sigma} + t_1^{\sigma+1} u_2^{\sigma +2} + t_1^{\sigma} u_1 t_2v_2 )
		x_4( t_1^{\sigma+2}u_1^{\sigma}u_2^{\sigma+1} 
		+ t_1^{\sigma+2} u_2^{2\sigma +3} + t_1^{\sigma +2}u_1^2 u_2 
		+ \left( t_1 u_1u_2 + t_1 u_1^{\sigma} + t_1 u_2^{\sigma +2} \right)\left( t_2^{\sigma+1} v_1 + t_2^{\sigma +1} v_2^{\sigma+1} \right) 
		+ \left( t_1^{\sigma+1}u_1^{\sigma} u_2^{\sigma} + t_1^{\sigma +1} u_1^2 + t_1^{\sigma+1} u_2^{2\sigma +2} \right) t_2v_2, t_1^{\sigma} u_1 )
		x_5( t_1^{\sigma+1}u_1^{\sigma} u_2^{\sigma} + t_1^{\sigma +1} u_1^2 
		+ t_1^{\sigma+1} u_2^{2\sigma +2} 
		+ \left( t_1 u_1u_2 + t_1 u_1^{\sigma} + t_1 u_2^{\sigma +2} \right) t_2^{\sigma} v_2^{\sigma} )
		x_6( t_1^{\sigma+1} u_1^{\sigma} u_2^{\sigma+1} 
		+ t_1^{\sigma+1} u_1^2 u_2 
		+ t_1^{\sigma+1}u_1^{\sigma+1} 
		+ \left( t_1 u_1u_2 + t_1 u_1^{\sigma} + t_1 u_2^{\sigma +2} \right)^{\sigma} t_2v_2, t_1 u_1 + t_1 u_2^{\sigma+1} )
		x_7( t_1 u_1u_2 
		+ t_1 u_1^{\sigma} 
		+ t_1 u_2^{\sigma +2} )
		x_3( t_2^{\sigma+1} v_1 v_2 + t_2^{\sigma+1}v_1^{\sigma} + t_2^{\sigma+1} v_2^{\sigma +2} ) \cdots 
		= x_2(\ldots) 
		x_3( t_1^{\sigma+1} u_1 u_2 
		+ t_1^{\sigma+1}u_1^{\sigma} 
		+ t_1^{\sigma+1} u_2^{\sigma +2} 
		+ t_1^{\sigma} u_1 t_2v_2 
		+ t_2^{\sigma+1} v_1 v_2 
		+ t_2^{\sigma+1}v_1^{\sigma} 
		+ t_2^{\sigma+1} v_2^{\sigma +2})
		x_4( t_1^{\sigma+2}u_1^{\sigma}u_2^{\sigma+1} 
		+ t_1^{\sigma+2} u_2^{2\sigma +3} 
		+ t_1^{\sigma +2}u_1^2 u_2 
		+ \left( t_1 u_1u_2 + t_1 u_1^{\sigma} + t_1 u_2^{\sigma +2} \right)\left( t_2^{\sigma+1} v_1 + t_2^{\sigma +1} v_2^{\sigma+1} \right)
		+ \left( t_1^{\sigma+1}u_1^{\sigma} u_2^{\sigma} + t_1^{\sigma +1} u_1^2 + t_1^{\sigma+1} u_2^{2\sigma +2} \right) t_2v_2 
		+ \left( t_1u_1 + t_1u_2^{\sigma +1} \right) \left( t_2^{\sigma+1} v_1 v_2 + t_2^{\sigma+1}v_1^{\sigma} + t_2^{\sigma+1} v_2^{\sigma +2} \right), t_1^{\sigma} u_1 )
		x_5( t_1^{\sigma+1}u_1^{\sigma} u_2^{\sigma} 
		+ t_1^{\sigma +1} u_1^2 
		+ t_1^{\sigma+1} u_2^{2\sigma +2} 
		+ \left( t_1 u_1u_2 + t_1 u_1^{\sigma} + t_1 u_2^{\sigma +2} \right) t_2^{\sigma} v_2^{\sigma} )
		x_6( t_1^{\sigma+1} u_1^{\sigma} u_2^{\sigma+1} 
		+ t_1^{\sigma+1} u_1^2 u_2 
		+ t_1^{\sigma+1}u_1^{\sigma+1} 
		+ \left( t_1 u_1u_2 + t_1 u_1^{\sigma} + t_1 u_2^{\sigma +2} \right)^{\sigma} t_2v_2, t_1u_1 + t_1u_2^{\sigma+1} )
		x_7( t_1 u_1u_2 
		+ t_1 u_1^{\sigma} 
		+ t_1 u_2^{\sigma +2} )
		x_4( t_2^{\sigma+2}v_1^{\sigma}v_2^{\sigma+1} 
		+ t_2^{\sigma+2} v_2^{2\sigma +3} 
		+ t_2^{\sigma +2}v_1^2 v_2, t_2^{\sigma} v_1 ) \cdots 
		= x_2(\ldots) x_3(\ldots)
		x_4( t_1^{\sigma+2}u_1^{\sigma}u_2^{\sigma+1} 
		+ t_1^{\sigma+2} u_2^{2\sigma +3} 
		+ t_1^{\sigma +2}u_1^2 u_2 
		+ \left( t_1 u_1u_2 + t_1 u_1^{\sigma} + t_1 u_2^{\sigma +2} \right)\left( t_2^{\sigma+1} v_1 + t_2^{\sigma +1} v_2^{\sigma+1} \right) 
		+ \left( t_1^{\sigma+1}u_1^{\sigma} u_2^{\sigma} + t_1^{\sigma +1} u_1^2 + t_1^{\sigma+1} u_2^{2\sigma +2} \right) t_2v_2
		+ \left( t_1u_1 + t_1u_2^{\sigma +1} \right) \left( t_2^{\sigma+1} v_1 v_2 + t_2^{\sigma+1}v_1^{\sigma} + t_2^{\sigma+1} v_2^{\sigma +2} \right) 
		+ t_2^{\sigma+2}v_1^{\sigma}v_2^{\sigma+1} 
		+ t_2^{\sigma+2} v_2^{2\sigma +3} 
		+ t_2^{\sigma +2}v_1^2 v_2 
		+ \left( t_1^{\sigma} u_1 \right)^{\sigma} t_2^{\sigma} v_1, t_1^{\sigma} u_1 + t_2^{\sigma} v_1 )
		x_5( t_1^{\sigma+1}u_1^{\sigma} u_2^{\sigma} 
		+ t_1^{\sigma +1} u_1^2 
		+ t_1^{\sigma+1} u_2^{2\sigma +2} 
		+ \left( t_1 u_1u_2 + t_1 u_1^{\sigma} + t_1 u_2^{\sigma +2} \right) t_2^{\sigma} v_2^{\sigma} 
		+ \left( t_1 u_1 + t_1 u_2^{\sigma +1} \right) t_2^{\sigma} v_1 )
		x_6( t_1^{\sigma+1} u_1^{\sigma} u_2^{\sigma+1} 
		+ t_1^{\sigma+1} u_1^2 u_2 
		+ t_1^{\sigma+1}u_1^{\sigma+1} 
		+ \left( t_1 u_1u_2 + t_1 u_1^{\sigma} + t_1 u_2^{\sigma +2} \right)^{\sigma} t_2v_2 
		+ \left( t_1 u_1u_2 + t_1 u_1^{\sigma} + t_1 u_2^{\sigma +2} \right) t_2^{\sigma} v_1, t_1u_1 + t_1u_2^{\sigma+1} )
		x_7( t_1 u_1u_2 
		+ t_1 u_1^{\sigma} 
		+ t_1 u_2^{\sigma +2} ) 
		x_5( t_2^{\sigma+1}v_1^{\sigma} v_2^{\sigma} 
		+ t_2^{\sigma +1} v_1^2 
		+ t_2^{\sigma+1} v_2^{2\sigma +2} )
		x_6( t_2^{\sigma+1} v_1^{\sigma} v_2^{\sigma+1} 
		+ t_2^{\sigma+1} v_1^2 v_2 
		+ t_2^{\sigma+1}v_1^{\sigma+1}, t_2v_1 + t_2v_2^{\sigma+1} )
		x_7( t_2v_1v_2 
		+ t_2v_1^{\sigma} 
		+ t_2v_2^{\sigma +2} ) 
		= x_2(\ldots)x_3(\ldots)x_4(\ldots)
		x_5( t_1^{\sigma+1}u_1^{\sigma} u_2^{\sigma} 
		+ t_1^{\sigma +1} u_1^2 
		+ t_1^{\sigma+1} u_2^{2\sigma +2} 
		+ \left( t_1 u_1u_2 + t_1 u_1^{\sigma} + t_1 u_2^{\sigma +2} \right) t_2^{\sigma} v_2^{\sigma} 
		+ \left( t_1 u_1 + t_1 u_2^{\sigma +1} \right) t_2^{\sigma} v_1 
		+ t_2^{\sigma+1}v_1^{\sigma} v_2^{\sigma} 
		+ t_2^{\sigma +1} v_1^2 
		+ t_2^{\sigma+1} v_2^{2\sigma +2} )
		x_6( t_1^{\sigma+1} u_1^{\sigma} u_2^{\sigma+1} 
		+ t_1^{\sigma+1} u_1^2 u_2 
		+ t_1^{\sigma+1}u_1^{\sigma+1} 
		+ \left( t_1 u_1u_2 + t_1 u_1^{\sigma} + t_1 u_2^{\sigma +2} \right)^{\sigma} t_2v_2 
		+ \left( t_1 u_1u_2 + t_1 u_1^{\sigma} + t_1 u_2^{\sigma +2} \right) t_2^{\sigma} v_1 
		+ t_2^{\sigma+1} v_1^{\sigma} v_2^{\sigma+1} 
		+ t_2^{\sigma+1} v_1^2 v_2 
		+ t_2^{\sigma+1}v_1^{\sigma+1} 
		+ \left( t_1u_1 + t_1u_2^{\sigma+1} \right)^{\sigma} \left( t_2v_1 + t_2v_2^{\sigma+1} \right) )
		y_6( t_1u_1 
		+ t_1u_2^{\sigma+1} 
		+ t_2v_1 
		+ t_2v_2^{\sigma+1} )
		x_7( t_1 u_1u_2 
		+ t_1 u_1^{\sigma} 
		+ t_1 u_2^{\sigma +2} 
		+  t_2v_1v_2 
		+ t_2v_1^{\sigma} 
		+ t_2v_2^{\sigma +2} )
	\end{autobreak}
	\qedhere
	\end{align*}
\end{proof}

\bibliography{references}

\begin{thebibliography}{10}

\bibitem{Ab96}
P.~Abramenko.
\newblock {\em Twin buildings and applications to {S}-arithmetic groups},
  volume 1641 of {\em Lecture Notes in Mathematics}.
\newblock Springer-Verlag, Berlin, 1996.

\bibitem{AB08}
P.~Abramenko and K.~S. Brown.
\newblock {\em Buildings}, volume 248 of {\em Graduate Texts in Mathematics}.
\newblock Springer, New York, 2008.
\newblock Theory and applications.

\bibitem{AC16}
D.~Allcock and L.~Carbone.
\newblock Presentation of hyperbolic {K}ac-{M}oody groups over rings.
\newblock {\em J. Algebra}, 445:232--243, 2016.

\bibitem{Bo68}
N.~Bourbaki.
\newblock {\em \'{E}l\'{e}ments de math\'{e}matique. {F}asc. {XXXIV}. {G}roupes
  et alg\`ebres de {L}ie. {C}hapitre {IV}: {G}roupes de {C}oxeter et syst\`emes
  de {T}its. {C}hapitre {V}: {G}roupes engendr\'{e}s par des r\'{e}flexions.
  {C}hapitre {VI}: syst\`emes de racines}.
\newblock Actualit\'{e}s Scientifiques et Industrielles, No. 1337. Hermann,
  Paris, 1968.

\bibitem{CM05}
P.-E. Caprace and B.~M\"{u}hlherr.
\newblock Reflection triangles in {C}oxeter groups and biautomaticity.
\newblock {\em J. Group Theory}, 8(4):467--489, 2005.

\bibitem{CR09}
P.-E. Caprace and B.~R\'{e}my.
\newblock Simplicity and superrigidity of twin building lattices.
\newblock {\em Invent. Math.}, 176(1):169--221, 2009.

\bibitem{Fe98}
A.~A. Felikson.
\newblock Coxeter decompositions of hyperbolic polygons.
\newblock {\em European J. Combin.}, 19(7):801--817, 1998.

\bibitem{He90}
J.-Y. H\'{e}e.
\newblock Construction de groupes tordus en th\'{e}orie de {K}ac-{M}oody.
\newblock {\em C. R. Acad. Sci. Paris S\'{e}r. I Math.}, 310(3):77--80, 1990.

\bibitem{KDis}
D.~Krammer.
\newblock The conjugacy problem for {C}oxeter groups.
\newblock {\em Groups Geom. Dyn.}, 3(1):71--171, 2009.

\bibitem{MR95}
B.~M\"{u}hlherr and M.~Ronan.
\newblock Local to global structure in twin buildings.
\newblock {\em Invent. Math.}, 122(1):71--81, 1995.

\bibitem{MW02}
B.~M\"{u}hlherr and R.~Weidmann.
\newblock Rigidity of skew-angled {C}oxeter groups.
\newblock {\em Adv. Geom.}, 2(4):391--415, 2002.

\bibitem{Ti87}
J.~Tits.
\newblock Uniqueness and presentation of {K}ac-{M}oody groups over fields.
\newblock {\em J. Algebra}, 105(2):542--573, 1987.

\bibitem{Ti92}
J.~Tits.
\newblock Twin buildings and groups of {K}ac-{M}oody type.
\newblock In {\em Groups, combinatorics \& geometry ({D}urham, 1990)}, volume
  165 of {\em London Math. Soc. Lecture Note Ser.}, pages 249--286. Cambridge
  Univ. Press, Cambridge, 1992.

\bibitem{TW02}
J.~Tits and R.~M. Weiss.
\newblock {\em Moufang polygons}.
\newblock Springer Monographs in Mathematics. Springer-Verlag, Berlin, 2002.

\bibitem{Wi20}
T.~Williams.
\newblock Ph{D} thesis (in preparation). {U}niversity of {V}irginia,
  {C}harlottesville.

\end{thebibliography}
\bibliographystyle{abbrv}

\end{document}